\documentclass{amsart}
 
\usepackage[english]{babel}
\usepackage{amssymb}
\usepackage{mathrsfs}
\usepackage{tikz-cd}
\usepackage[colorlinks]{hyperref}
\hypersetup{citecolor=teal, linkcolor=violet, urlcolor=magenta}
\usepackage{indentfirst}
\usepackage{enumerate}
\usepackage{float}
\usepackage{graphicx}
\usepackage{color}
\usepackage{mathtools}
\usepackage{caption}
\usepackage{stmaryrd}
\usepackage{multicol}
\usepackage[all]{xy}

\usepackage{nicematrix}

\usepackage{verbatim}

\usepackage[colorinlistoftodos]{todonotes}

\usepackage{xfrac}

\newtheorem{thm}{Theorem}[section]
\newtheorem{rmk}[thm]{Remark}

\newtheorem{prop}[thm]{Proposition}
\newtheorem{cor}{Corollary}[thm]
\newtheorem{lema}[thm]{Lemma}

\theoremstyle{definition}

\newtheorem{defi}[thm]{Definition}
\newtheorem{exe}[thm]{Example}
\newtheorem{que}[thm]{Question}

\usepackage{apptools}

\def\Proj{\text{Proj}}

\def\C{\mathbb{C}}

\def\P{\mathbb{P}}
\def\Q{\mathbb{Q}}

\def\N{\mathcal{N}}
\def\calP{\mathcal{P}}
\def\calQ{\mathcal{Q}}
\def\calL{\mathcal{L}}

\def\X{\mathcal{X}_{k,d,n}}
\def\H{\text{H}}

\DeclareMathOperator{\SL}{SL}
\DeclareMathOperator{\ord}{ord}

\DeclareMathOperator{\lct}{lct}
\DeclareMathOperator{\rank}{rank}

\everymath{\displaystyle}

\emergencystretch=\maxdimen
\usepackage{soul}

\title{Nets of quadric surfaces and plane cubics and their GIT stability}
\date{\today}

 \author[M. Hattori]{Masafumi Hattori \textsuperscript{1}}
\address{\textsuperscript{1} School of Mathematical Sciences, University of Nottingham, Nottingham NG7 2RD, UK.}
\email{masafumi.hattori@nottingham.ac.uk}

\author[T. S. Papazachariou]{Theodoros Stylianos Papazachariou \textsuperscript{2}}
\address{\textsuperscript{2} Yau Mathematical Sciences Center, Jingzhai, Tsinghua University, Haidian District, Beijing, China.}
\email{tpapazachariou@mail.tsinghua.edu.cn}

\author[A. Zanardini]{Aline Zanardini \textsuperscript{3}}
\address{\textsuperscript{3} Institut de Mathématiques, 
École Polytechnique Fédérale de Lausanne, Lausanne, Switzerland.}
\email{aline.zanardini@epfl.ch}

\begin{document}

\begin{abstract}
     A general net of quadric surfaces, together with a choice of a base point, defines a net of plane cubics via the Gale transformation of the remaining seven base points. To both nets, one can also naturally associate the same smooth plane quartic. In this paper, we generalize the cycle of correspondences arising from nets of quadrics that define rational elliptic threefolds and provide a complete criterion for GIT stability of the three underlying geometric objects using birational-geometric techniques. 
\end{abstract}

\maketitle

\setcounter{tocdepth}{3}
{\hypersetup{hidelinks}
\tableofcontents}

\section{Introduction}

Even though Mumford's geometric invariant theory (GIT) is one of the fundamental tools in the construction of moduli spaces in algebraic geometry, explicit criteria for GIT (semi)stability remain relatively rare. This paper presents a birational-geometric method to address three classical, apparently unrelated GIT problems explicitly. More precisely, we relate the GIT stability of nets of quadrics in $\P^3$, of nets of plane cubics, and of plane quartics. Our approach is based on a criterion that stems from analyzing singularities of log pairs in the spirit of the previous work by the first and third authors \cite{hz} and on showing that three log pairs naturally associated with the three objects in consideration are log crepant and related by a log Calabi-Yau fibration (cf.~\cite{ambro2004shokurov}). Our main result shows, through a birational-geometric framework, that there is an equivalence, among GIT stability of nets of quadrics in $\P^3$ with a discriminant quartic $\Delta(\N)$ with only ADE singularities, of nets of plane cubics with seven base points, and of the plane quartics $\Delta(\N)$ (Theorem \ref{thm:main}). This can be summarized as follows. 

\begin{thm}\label{thm:main}
    Let $\N$ be a net of quadrics in $\P^3$ with a discriminant quartic $\Delta(\N)$ that is reduced and has only ADE singularities. Then, any choice of a base point $p \in \P^3$ of $\N$ defines a net of plane cubics $G(\N,p)$ with seven base points.
    
    Moreover, the following statements about the GIT (semi)stability of $\N, G(\N,p)$ and $\Delta(\N)$ are equivalent.
    \begin{enumerate}[(i)]
        \item $\N$ is GIT stable (resp. semistable) with respect to the natural action of $\SL(4)$.
        \item $G(\N,p)$ is GIT stable (resp. semistable) with respect to the natural action of $\SL(3)$.
        \item $\Delta(\N)$ is GIT stable (resp. semistable) with respect to the natural action of $\SL(3)$.
    \end{enumerate}
\end{thm}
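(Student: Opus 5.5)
The strategy is to translate each of the three GIT problems into a statement about log canonical thresholds of naturally associated log pairs, in the spirit of \cite{hz}. Then I would show that the three pairs are crepant-birational, or linked through a log Calabi--Yau fibration, so that the thresholds agree. The argument has four steps.

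\textbf{Step 1: the construction.} Let $p$ be a base point of $\N$. Projecting from $p$ sends the remaining seven base points to seven points of $\P^2$. The Gale transform of the eight base points, taken with respect to $p$, gives seven points in $\P^2$, and $G(\N,p)$ is the net of cubics through them. The discriminant condition (reduced, only ADE singularities) should ensure that no four of the eight base points are coplanar in a degenerate way. It should also ensure that the seven Gale points impose independent conditions on cubics and that $G(\N,p)$ has exactly seven base points. This is the classical correspondence for a Cayley octad. The second blow-up description is the following: the blow-up of $\P^2$ at the seven points, i.e.\ a degree two del Pezzo surface, has anticanonical double cover branched along a quartic projectively equivalent to $\Delta(\N)$. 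This identifies the quartic attached to $G(\N,p)$ with $\Delta(\N)$.

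\textbf{Step 2: stability criteria via log canonical thresholds.} For each object I would state a Hilbert--Mumford type criterion.
\begin{enumerate}[(a)]
\item For plane quartics the classical criterion applies: $\Delta$ is stable (resp.\ semistable) iff the pair $(\P^2, \tfrac34\Delta)$ is klt (resp.\ lc). For reduced ADE quartics this reduces to excluding tacnodes and worse, consistent with the known list.
\item For nets of quadrics I would consider the pair $(\P^3, c\,\mathcal{D})$, where $\mathcal{D}$ is a general member combination. Alternatively, I would use the pair on the rational elliptic threefold, namely the blow-up of $\P^3$ at the base points fibred over $\P^2$ by the net.
\item For nets of cubics I would use the analogous pair on the blow-up $\P^2$ fibred by the pencils.
\end{enumerate}
In each case a destabilizing one-parameter subgroup is a weight filtration, and I would relate its Hilbert--Mumford weight to the discrepancy of the associated weighted blow-up divisor. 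I would take the criterion from \cite{hz} in this form: semistability holds iff a suitable pair is lc along all toric valuations.

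\textbf{Step 3: comparison of the pairs.} The threefold $X=\mathrm{Bl}_{8}\P^3 \to \P^2$ is a conic-bundle-type fibration whose discriminant is $\Delta(\N)$. Ambro's canonical bundle formula \cite{ambro2004shokurov} for this log Calabi--Yau fibration should produce the base pair $(\P^2, \tfrac12\Delta(\N)+\ldots)$. Projecting from $p$ relates $X$ to the del Pezzo double cover side, where the same formula for the cubic fibration again yields $\Delta(\N)$. Since lc and klt are preserved under crepant birational maps and under the canonical bundle formula (the moduli part being nef and semiample here), each pair is klt/lc iff the quartic pair is. Combining this with Step 2 gives (i)$\Leftrightarrow$(iii) and (ii)$\Leftrightarrow$(iii).

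\textbf{Main obstacle.} The hardest part is Step 2 for nets of quadrics and nets of cubics. Here I must show that the Hilbert--Mumford weight of every one-parameter subgroup matches a discrepancy inequality, including for one-parameter subgroups not adapted to the base points. I would try first to reduce, using the ADE hypothesis, to one-parameter subgroups whose weight filtration is compatible with the singular points of $\Delta(\N)$. Then I would check the few possible destabilizing configurations, for example a net containing a cone or a quadric pair sharing a line, directly against the corresponding singularities of $\Delta$.
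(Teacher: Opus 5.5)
There are genuine gaps. Two of the comparisons you propose are false as stated, and the lemma that turns GIT semistability into log canonicity is missing.

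\textbf{Step 2(a) and Step 3.} Step 2(a) fails for semistability. A reduced quartic whose singularities are double points, with an $\mathbf{A}_n$ point for some $4\le n\le 7$, and which is not a cubic plus an inflectional tangent line, is GIT semistable; it is S-equivalent to the double conic. Yet the lct at that point is $\tfrac12+\tfrac1{n+1}<\tfrac34$, so $(\mathbb{P}^2,\tfrac34\Delta)$ is not lc there. Such discriminants really occur for semistable, non-stable good nets (Examples \ref{exe-A_4}--\ref{exe-A_6}). Routing everything through $(\mathbb{P}^2,\tfrac34\Delta)$ would therefore declare these nets unstable.

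The pair that actually matches $G(\mathcal{N},p)$ is $(\mathbb{P}^2,\tfrac13(H_1+H_2+H_3)+\tfrac12\Delta(\mathcal{N}))$, taken over every triple of independent lines $H_i\subset|\mathcal{N}|$. It is log crepant to $(\mathbb{P}^2,\tfrac13(C_1+C_2+C_3))$ via the weak del Pezzo double cover branched along $\Delta(\mathcal{N})$ (Proposition \ref{prop--log--crepant}). Even for this pair, the paper only proves that $\Delta$ (semi)stable implies the pair is klt (lc). When $\Delta$ is not S-equivalent to a double conic this follows by convexity with $(\mathbb{P}^2,\tfrac34\Delta)$; otherwise it needs a separate inversion-of-adjunction argument at the $\mathbf{A}_{\ge4}$ points.

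Step 3 uses the wrong fibration. $\mathrm{Bl}_8\mathbb{P}^3\to|\mathcal{N}|^\vee$ is an elliptic fibration, not a conic bundle, and its singular fibres lie over the \emph{dual} curve of $\Delta(\mathcal{N})$. So no canonical bundle formula there produces $\tfrac12\Delta(\mathcal{N})$. What works is the $\mathbb{P}^1$-bundle $\mathrm{Bl}_p\mathbb{P}^3\to\mathbb{P}^2$ given by projection from $p$. The pair $(\mathrm{Bl}_p\mathbb{P}^3,\tfrac23(Q_1'+Q_2'+Q_3'))$ is lc-trivial over $\mathbb{P}^2$, with general fibre $(\mathbb{P}^1,\tfrac23(\text{three distinct points}))$ and hence zero moduli part. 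Its discriminant is exactly $\tfrac13(C_{12}+C_{23}+C_{31})$, a set of generators of $G(\mathcal{N},p)$. This links $\mathcal{N}$ to $G(\mathcal{N},p)$, not to $\Delta$.

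\textbf{The missing lemma.} The deeper gap is the passage from semistability to lc, which your ``main obstacle'' paragraph does not resolve. The criterion of \cite{hz} only says that semistability is equivalent to no \emph{toric} divisor violating $A/\ord\ge\tfrac{n+1}{d}$. That gives klt $\Rightarrow$ stable, but not semistable $\Rightarrow$ lc, because a non-toric divisor may compute the lct.

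For sums of cubics the paper closes this gap with Theorem \ref{thm--cubics}:
\begin{enumerate}
\item In a tower of point blow-ups, the first non-toric exceptional divisor has its centre on no toric divisor.
\item The non-toric part of the cubics meets the previous exceptional curve with multiplicity at most $n$.
\item Hence Kawakita's inversion of adjunction rules out a new non-lc centre.
\end{enumerate}
After handling a smooth-conic lc centre by changing generators, this gives: $G(\mathcal{N},p)$ is (semi)stable iff every $(\mathbb{P}^2,\tfrac13\sum C_i)$ is klt (lc).

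No analogue is proved for quadrics in $\mathbb{P}^3$. The paper avoids needing one by closing a cycle rather than proving equivalences through a single base pair:
\begin{enumerate}
\item $\mathcal{N}$ (semi)stable $\Rightarrow$ $\Delta(\mathcal{N})$ (semi)stable, by explicit Hilbert--Mumford computations with no log pairs (Proposition \ref{prop: stab_quartic_net}).
\item $\Delta(\mathcal{N})$ $\Rightarrow$ $G(\mathcal{N},p)$, via the double cover and the cubic criterion.
\item $G(\mathcal{N},p)$ $\Rightarrow$ $(\mathbb{P}^3,\tfrac23\sum Q_i)$ klt (lc), via Ambro.
\item That klt (lc) condition $\Rightarrow$ $\mathcal{N}$ (semi)stable, by \cite{hz}.
\end{enumerate}

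Finally, Step 1 is not just the classical Cayley octad. Under the ADE hypothesis the base points may be infinitely near. One needs ADE $\Rightarrow$ good, together with a degeneration argument, to identify the branch quartic with $\Delta(\mathcal{N})$.
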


 A more precise description of the three GIT problems is provided in Section \ref{3git}, and a slightly stronger statement that depends on introducing further notations will be the content of Theorem \ref{thm--equiv.good.nets}.
 Note that the complete description of the GIT stability of the plane quartics $\Delta(\N)$ is well known and can be found, for example, in  \cite{mumford,ascher2024wall}.
 
 We also establish the following useful criterion, which holds without assuming that $\Delta(\N)$ has only ADE singularities.

\begin{cor}[= Corollary \ref{cor--final}]\label{cor:main}
Let $\mathcal{N}$ be a net of quadrics in $\mathbb{P}^3$.
Then, $\mathcal{N}$ is GIT stable (resp.~semistable) if and only if $\Delta(\mathcal{N})\neq \P^2$ and $\Delta(\mathcal{N})$ is also GIT stable (resp.~semistable). 
    \end{cor}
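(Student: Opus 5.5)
We want to prove Corollary \ref{cor:main}: $\N$ is GIT (semi)stable if and only if $\Delta(\N)\neq\P^2$ and $\Delta(\N)$ is GIT (semi)stable, with no assumption on the singularities of $\Delta(\N)$. The approach is to reduce to the Hilbert--Mumford numerical criterion and compare weights directly, without going through the cubic nets.

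\textbf{Equivariance and the easy direction.} The discriminant map $\N\mapsto \Delta(\N)=\det(\lambda_0Q_0+\lambda_1Q_1+\lambda_2Q_2)$ is equivariant for $\SL(4)\times\SL(3)$. Here $\SL(4)$ acts trivially on $\Delta$, up to the determinant character, which is trivial. So the map descends to a rational map between GIT quotients, and it is given by invariant polynomials of the correct degree. Concretely, pulling back $\SL(3)$-invariants of quartics along $\Delta$ gives $\SL(4)\times\SL(3)$-invariants of nets. Hence if $\Delta(\N)$ is semistable, some invariant is nonzero on $\Delta(\N)$, and therefore on $\N$. This forces $\Delta(\N)\ne 0$, which is exactly the condition $\Delta(\N)\ne\P^2$, and it gives semistability of $\N$.

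\textbf{The converse via one-parameter subgroups.} Suppose $\N$ is semistable but $\Delta(\N)$ is unstable, or $\Delta(\N)\equiv 0$. In the unstable case there is a one-parameter subgroup $\mu$ of $\SL(3)$ destabilizing the quartic. The plan is to lift $\mu$ to a pair $(\mu,\nu)$ in $\SL(3)\times\SL(4)$ that destabilizes $\N$. In suitable coordinates the destabilization says that $\Delta(\N)$ has a singular point or flex of a specified multiplicity type. Using the symmetric-matrix description, the corresponding pencil or member of $\N$ has a corank at least $2$ quadric, or quadrics sharing a singular point. One then chooses $\nu$ to weight the coordinates of $\P^3$ adapted to that kernel, so that the weights of all monomials of the $4\times4$ determinant are controlled by those of the entries. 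The case $\Delta\equiv0$ is handled by classical linear algebra: a net of symmetric matrices that is generically degenerate has a common kernel vector or a common isotropic structure, either of which is visibly unstable. Stability is treated the same way with strict inequalities. For the converse direction this is the main obstacle, since it requires a case-by-case matching of the destabilizing types of quartics (from \cite{mumford}) with types of nets. I expect the alternative route below to avoid it.

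\textbf{Alternative route via Theorem \ref{thm--equiv.good.nets}.} One can bypass explicit weight matching. When $\Delta(\N)$ is reduced with only ADE singularities, the equivalence is Theorem \ref{thm:main}. The remaining cases are those where $\Delta(\N)$ is nonreduced or has a non-ADE singularity. A plane quartic with such singularities is strictly semistable at best, for example a double conic or a quartic with a tacnode, and is never stable. So for stability it suffices to show that $\N$ is then not stable. This follows from the first direction applied to limits: degenerate along $\mu$ to a $\mu$-fixed quartic, and use properness of the quotient to lift it to a degeneration of $\N$. For semistability one shows that $\N$ semistable forces $\Delta(\N)$ semistable by the log-canonical-threshold criterion of \cite{hz}. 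Here the key step is that the pair $(\P^2,\tfrac34\Delta(\N))$ being log canonical is implied by the corresponding condition on $\N$, via the log crepant correspondence. This lct comparison is the step I expect to require the most work.
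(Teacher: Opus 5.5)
\textbf{Verdict: genuine gap.} You do not prove that $\N$ (semi)stable implies $\Delta(\N)\neq\P^2$ and $\Delta(\N)$ (semi)stable when $\Delta(\N)$ is not reduced with ADE singularities.

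\textbf{What works.} Your equivariance argument is correct, and for that implication it is cheaper than the paper's route. On triples of quadratic forms, the map $(f_1,f_2,f_3)\mapsto\det(\lambda_1A_1+\lambda_2A_2+\lambda_3A_3)$ is $\SL(4)\times\SL(3)$-equivariant, with $\SL(4)$ acting trivially on the target. Pulling back $\SL(3)$-invariants of quartics therefore gives $\SL(4)\times\SL(3)$-invariants of triples. By the first fundamental theorem for $\SL(3)$, these are polynomials in the Pl\"ucker coordinates, i.e.\ $\SL(4)$-invariant sections of $\mathscr{A}^{\otimes m}$ on $\mathcal{X}_{2,2,3}$. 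You should spell out this passage from triples to the Grassmannian rather than speak of ``invariants of nets''.

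This yields $\Delta(\N)$ semistable $\Rightarrow$ $\N$ semistable without the computer-assisted classification of the appendix (Corollary \ref{cor: unstable nets discriminants}). Then $\Delta(\N)$ stable $\Rightarrow$ $\N$ stable follows from Theorem \ref{thm:main}, since stable quartics have at worst $\mathbf{A}_2$ singularities; this is exactly the paper's argument.

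\textbf{What is missing.} The converse implication is not established outside the reduced ADE case. This includes $\Delta(\N)=\P^2$, where your ``common kernel or common isotropic structure'' dichotomy is only asserted. In the paper this is Proposition \ref{prop: stab_quartic_net}, a direct Hilbert--Mumford computation. One normalizes a singular member $Q_1$ of rank $1$, $2$ or $3$ together with the equation of $\Delta(\N)$. One then exhibits explicit one-parameter subgroups $\rho_1,\ldots,\rho_6$ with $\mu(\N,\rho_i)\le 0$. When $\Delta(\N)$ is unstable, one further shows that the weight-zero degenerations along them are unstable nets.

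\textbf{Why none of your three attempts supplies it.}
\begin{itemize}
\item The lifting of destabilizing one-parameter subgroups is only announced. The case-by-case matching you postpone is the whole content of that proposition.
\item The ``limits plus properness'' step cannot work. The implication you have only produces nonvanishing invariants and carries no information about non-stability. A one-parameter subgroup of $\SL(3)$ acting on $|\N|$ merely changes the basis of the same net, so degenerating $\Delta(\N)$ produces no orbit degeneration of $\N$. Properness of the quotient says nothing about whether a stable net can lie over a strictly semistable quartic such as a double conic. Also, a tacnode is an $\mathbf{A}_3$ singularity, hence ADE and already covered by Theorem \ref{thm:main}.
\item The key step of your lct route cannot hold as stated. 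The net of Example \ref{exe-A_4} is semistable by Theorem \ref{thm:main}, since its discriminant is an irreducible quartic with only $\mathbf{A}_4$ and $\mathbf{A}_2$ points. At the $\mathbf{A}_4$ point the local log canonical threshold is $\tfrac12+\tfrac15=\tfrac{7}{10}<\tfrac34$, so $(\P^2,\tfrac34\Delta(\N))$ is not lc. Log canonicity of $(\P^2,\tfrac34\Delta)$ is sufficient but not necessary for semistability of a quartic.
\item Moreover, via \cite{hz}, semistability of $\N$ only controls toric valuations of $(\P^3,\tfrac23(Q_1+Q_2+Q_3))$, not its log canonicity. The log crepant and lc-trivial comparisons (Propositions \ref{prop--log--crepant} and \ref{prop:cubics-to-quadrics:git}) are available only for good nets, i.e.\ precisely when $\Delta(\N)$ is reduced with ADE singularities.
\end{itemize}

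\textbf{How to close it.} You need a direct one-parameter-subgroup analysis such as Proposition \ref{prop: stab_quartic_net}. Combined with your invariant-pullback argument, that would prove the corollary without the appendix.
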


The correspondence $\N\mapsto G(\N,p)$, which is defined using the projection of lines passing through $p$, is a generalization of a manifestation of Gale duality, a classical association between collections of points in projective spaces, and is detailed in Section \ref{sec:construction_extended} and Proposition \ref{qtoc}. To our knowledge, such correspondence was known only for nets with sufficiently general base points (see Section \ref{sec:cycle}), and we extend it to a significantly larger class of degenerate configurations. 

Let us now present a more geometric characterization of the nets to which Theorem \ref{thm:main} applies. Such nets were first considered in \cite{extremal} and are called \textbf{good} in this paper (see Definitions \ref{def: regular} and \ref{def:good}).

\begin{prop}\label{prop:main}
Let $\N$ be a net of quadrics in $\P^3$. Then the following are equivalent:
\begin{enumerate}
\item $\N$ is {\bf good}, that is, $\N$ has exactly eight base points (possibly infinitely near), and there exist two distinct members $Q_1$ and $Q_2$ of $\mathcal{N}$ such that $Q_1\cap Q_2$ is smooth at any $\P^3$-base point of $\mathcal{N}$;
    \item $\N$ has exactly eight base points (possibly infinitely near) and for any base point $p\in\P^3$ of $\N$, at most one quadric of $\N$ is singular at $p$;
    \item $\N$ has eight base points $\{p_1,\ldots,p_8\}$ (possibly infinitely near) such that $\N$ defines an elliptic fibration $\mathrm{Bl}_{p_1,\ldots,p_8}(\P^3)\to\P^2$;
    \item $\Delta(\N)$ is a reduced plane curve with only ADE singularities.
\end{enumerate}
\end{prop}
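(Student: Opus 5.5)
We plan to prove the cycle of implications $(1)\Rightarrow(2)\Rightarrow(3)\Rightarrow(4)\Rightarrow(1)$. The base locus is handled through the intersection $Q_1\cap Q_2$ of two members, and the discriminant through the fibration structure.

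\emph{$(1)\Leftrightarrow(2)$.} Let $p\in\P^3$ be a base point. Suppose two distinct quadrics $Q,Q'$ of $\N$ are singular at $p$. Then every member of the pencil $\langle Q,Q'\rangle$ is singular at $p$. Any other pair $Q_1,Q_2$ spans a pencil that meets $\langle Q,Q'\rangle$ in the net $\N\cong\P^2$, so some member of $\langle Q_1,Q_2\rangle$ is singular at $p$. The tangent planes $T_pQ_1,T_pQ_2$ then fail to be independent, and $Q_1\cap Q_2$ is singular at $p$. This gives $(1)\Rightarrow(2)$.

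Conversely, assume (2). The map $Q\mapsto dQ(p)$ from the $3$-dimensional space of the net to $T_p^*$ has kernel of dimension at most $1$. Its image therefore has rank at least $2$, and two members with independent differentials at $p$ exist. There are finitely many base points, and the locus of pairs in $\N\times\N$ that fail at a given point is a proper closed subset. A general pair thus works simultaneously at all base points, which gives (1).

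\emph{$(1)\Rightarrow(3)$.} The curve $C=Q_1\cap Q_2$ is a complete intersection of type $(2,2)$, hence has arithmetic genus $1$. It is smooth at the base points and contains them. Blow up the eight base points successively. Where base points are infinitely near, we need smoothness of $C$ at them to ensure each step is a blowup of a point on a smooth curve germ of the strict transforms. The condition in (1) gives that the resolved map $X=\mathrm{Bl}(\P^3)\to\P^2$ is a morphism. Its general fibre is the strict transform of $Q\cap Q'$, which is a genus-$1$ curve, so the map is an elliptic fibration.

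\emph{$(3)\Rightarrow(2)$.} If two quadrics were singular at a base point $p$, the pencil they span would consist of quadrics all singular at $p$. The fibres over that line would then contain the exceptional divisor over $p$, or the map would fail to be defined there. This contradicts the assumption that the fibration is a morphism with one-dimensional fibres, which we check via a local computation.

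\emph{$(3)\Leftrightarrow(4)$.} This is the main obstacle. The discriminant $\Delta(\N)$ parametrises singular quadrics. For $\ell\subset\N$ a line, the fibres of $X\to\P^2$ over $\ell$ form a pencil of elliptic curves on the quadric base-locus surfaces. One approach is to identify $\Delta(\N)$ with the discriminant of the fibration: a fibre $Q\cap Q'$ is singular exactly when the pencil $\langle Q,Q'\rangle$ is tangent to $\Delta$ or passes through a singular point, via Segre symbols.

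For $(3)\Rightarrow(4)$, $X$ is smooth and the fibration is flat. We would use that the threefold $X$ is smooth to show that the curve over which fibres are singular has only ADE singularities. Concretely, we relate the local singularity of $\Delta$ at $[Q]$ to the corank of $Q$ and its singular point. Corank $1$ gives local equation singularities of type $A_k$ through $D,E$. Corank $\ge2$ would make $\Delta$ have a point of multiplicity at least $3$ along a non-ADE structure, or make $\Delta$ non-reduced. In that case the vertex line of a corank-$2$ quadric meets the base locus, which forces two quadrics singular at a base point.

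For $(4)\Rightarrow(1)$, argue by contrapositive using this classification: a bad base point produces a corank-$\geq2$ or non-reduced component of $\Delta$. I expect this local analysis of $\Delta$ in terms of Segre symbols and the position of singular points relative to the base locus to be the hardest step. I would try first to reduce it to the classification of pencils of quadrics by Segre symbol, applied to all pencils in $\N$ through $[Q]$.
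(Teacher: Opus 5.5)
Your arguments for $(1)\Leftrightarrow(2)$ and $(1)\Rightarrow(3)$ are essentially fine; they are the standard ones the paper takes from \cite{extremal}. The genuine gap is the equivalence with (4), which is the real content of the proposition. The local dictionary you propose between the corank of a member and the singularity of $\Delta(\N)$ is false, in fact essentially inverted.

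Take $Q_1$ of corank one whose vertex $v=(0:0:0:1)$ is a base point, and write $A_1=\mathrm{diag}(A',0)$ with $A'$ invertible. A Schur complement shows that the local equation of $\Delta(\N)$ at $[Q_1]$ has quadratic part a nonzero multiple of $r^{T}A'^{-1}r$. Here $r=\mu r_2+\nu r_3$ collects the first three entries of the last column of $\mu A_2+\nu A_3$. Condition (2) at $v$ says exactly that $r_2,r_3$ are independent. A nondegenerate ternary form cannot vanish identically on a plane, so only $\mathbf{A}_k$ ($k\le\infty$) occurs at such points, never $\mathbf{D}$ or $\mathbf{E}$.

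Conversely, members of corank $\ge2$ do occur in good nets, at ADE points of $\Delta(\N)$:
\begin{enumerate}
\item[$\bullet$] Take $Q_1=\{xy=0\}$ with general $Q_2,Q_3$. The net has eight distinct base points, four in each plane and none on the vertex line. $\Delta(\N)$ has a node at $[Q_1]$, with tangent cone a multiple of $\det(\mu N_2+\nu N_3)$, where $N_i$ is the restriction of $Q_i$ to the vertex line.
\item[$\bullet$] Example~\ref{exe-E_7} is a good net containing the rank-one member $z^2=0$, at which $\Delta(\N)$ has an $\mathbf{E}_7$ point.
\end{enumerate}
So your claims that corank $\ge2$ forces a non-ADE or non-reduced point, and that the vertex line must meet the base locus, are wrong, and the plan for $(3)\Rightarrow(4)$ collapses. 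Your other suggestion targets the wrong curve. The discriminant of $X\to|\N|^\vee$ is essentially the dual curve of $\Delta(\N)$, of degree $12$, and smoothness of $X$ gives no local control of $\Delta(\N)$ itself.

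The missing idea is the paper's global argument (Proposition~\ref{extended}). The generalized Gale transform $G(\N,p)$ yields a weak del Pezzo surface $Y$ of degree two. Its anticanonical model is a double plane with canonical singularities, hence branched along a reduced quartic $B$ with only ADE singularities (Lemma~\ref{doubleplane}). One proves $B=\Delta(\N)$ for typical nets and propagates it to all good nets by a flat degeneration.

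For $(4)\Rightarrow(1)$ no corank analysis is needed, but half of the contrapositive is missing. If two members $Q_2,Q_3$ are singular at a base point $p=(0:0:0:1)$, the last row and column of $\lambda A_1+\mu A_2+\nu A_3$ are $\lambda$ times those of $A_1$, and the $(4,4)$ entry vanishes. Hence $\lambda^2$ divides the determinant, so $\Delta(\N)$ contains a double line or equals $\P^2$. You must also treat nets that fail to have exactly eight base points, i.e.\ nets with a base curve $C$. As in Proposition~\ref{prop:good--ADE--corresp}, there are two cases:
\begin{enumerate}
\item[$\bullet$] some member of rank at most two is singular along $C$, which gives a point of multiplicity $\ge4$ on $\Delta(\N)$;
\item[$\bullet$] every point of $C$ is the vertex of some member, which gives infinitely many singular points, so $\Delta(\N)$ is non-reduced.
\end{enumerate}

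Finally, your $(3)\Rightarrow(2)$ is only a sketch. Condition (3) does not say the fibres are one-dimensional, so a fibre containing $E$ is not yet a contradiction. What works is this: if exactly a pencil of members is singular at $p$, let $g\colon\mathrm{Bl}_p\P^3\to\P^3$ be the blowup with exceptional divisor $E$. The system $g^*\N-E$ then has the whole line $\{dQ_1(p)=0\}\subset E$ in its base locus, which no sequence of point blowups removes. When every member is singular at $p$, the net is a cone over a net of conics and its general fibres are not elliptic. The paper instead proves $(3)\Rightarrow(1)$ by restricting to the rational elliptic surface over a general line of $|\N|^\vee$.
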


The first two conditions were observed to be equivalent and imply the third condition in \cite[Assumption 1]{extremal}. 
We show that the converse also holds in Proposition \ref{prop--ellip-fibration--criterion}.
Furthermore, we prove the equivalence of the fourth condition with the three others in Propositions \ref{prop:good--ADE--corresp} and \ref{extended}.

\vspace{2mm}
\noindent{\bf Main idea of the proof of Theorem \ref{thm:main}.} 
To prove Theorem \ref{thm:main}, we apply the ideas first developed in \cite{hz}.
In \cite{hz}, the authors showed that the GIT stability of a $k$-dimensional linear system $\calL$ of degree $d$ in $\P^n$ can be detected by analyzing the singularities of the log pairs $(\P^n,\tfrac{n+1}{d(k+1)}(H_1+\ldots+H_{k+1}))$ with respect to toric valuations (Definitions \ref{defi--log--pair} and \ref{DefToricDiv}), where $H_1,\ldots,H_{k+1}$ are hypersurfaces that generate $\calL$ (cf.~Theorem \ref{thm:stab_linear_systems} and Lemma \ref{hypersurftoric}).
We therefore reduce the analysis of the GIT (semi)stability of each of the three types of objects (nets of quadric surfaces, nets of plane cubics, and plane quartics) that we consider here to an analysis of the singularities of three kinds of log pairs with respect to toric valuations. 

\vspace{2mm}
\noindent{\bf The $G(\N,p)$--$\Delta(\N)$ correspondence.} For example, if we consider the net of cubics $G(\N,p)$, we can then define a weak del Pezzo surface $Y$ as the blowing up of $\P^2$ along the base locus of $G(\N,p)$. 
Then we can show that $Y$ is a double cover of $|\N|(\cong\P^2)$ branched along $\Delta(\N)$ (Proposition \ref{extended}).
In this setup, GIT stability of $G(\N,p)$ is detected by the singularities of the pair $(Y, \tfrac{1}{3}(\tilde{C}_1+\tilde{C}_2+\tilde{C}_3))$ with respect to toric valuations, where $\tilde{C}_1$, $\tilde{C}_2$ and $\tilde{C}_3$ are the curves corresponding to a choice of generators $C_1$, $C_2$ and $C_3$ of $G(\N,p)$ such that $(\P^2,\tfrac{1}{3}(C_1+C_2+C_3))$ and $(Y, \tfrac{1}{3}(\tilde{C}_1+\tilde{C}_2+\tilde{C}_3))$ are log crepant.
Letting $H_i$ be the lines in $|\N|$ corresponding to the cubics $C_i$, we see that  $(Y, \tfrac{1}{3}(\tilde{C}_1+\tilde{C}_2+\tilde{C}_3))$ and $(|\N|,\tfrac{1}{3}(H_1+H_2+H_3)+\tfrac{1}{2}\Delta(\N))$ are log crepant in the sense of Definition \ref{def:logcrep} (Proposition \ref{prop--log--crepant}) and therefore we can compare their GIT stability.

Regarding the outlined strategy, we also prove the following auxiliary results, which enable our comparison strategy. 

\begin{thm}[=Theorem \ref{thm--cubics}]\label{thm:main2}
    Fix an integer $n\geq 2$ and let $C_1,\ldots,C_n$ be pairwise distinct cubic curves in $\mathbb{P}^2$. Then the log pair $\left(\mathbb{P}^2,\tfrac{1}{n}(C_1+\ldots+C_n)\right)$ is lc if and only if the degree $3n$ curve $C_1+\ldots+C_n$ is GIT semistable with respect to the natural action of $
    \SL(3)$.
Furthermore, $C_1+\ldots+C_n$ is GIT stable if and only if the log pair $\left(\mathbb{P}^2,\tfrac{1}{n}(C_1+\ldots+C_n)\right)$ is klt or lc with only one lc center that is a smooth conic on $\P^2$.
\end{thm}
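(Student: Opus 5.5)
The plan is to prove Theorem \ref{thm:main2} by combining the toric-valuation criterion of \cite{hz} for GIT stability of plane curves with the classical Hilbert--Mumford numerical criterion for the degree $3n$ curve $D=C_1+\ldots+C_n$. Throughout, write $B=\tfrac{1}{n}D$. Note that $\deg B=3$, so $K_{\P^2}+B\equiv 0$ and the pair $(\P^2,B)$ is log Calabi--Yau.

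The first step concerns semistability. By the Hilbert--Mumford criterion, $D$ is unstable if and only if, in suitable coordinates, there are weights $w=(w_0,w_1,w_2)$ with $\sum w_i=0$ such that every monomial $x^a$ of the defining equation satisfies $\langle w,a\rangle>0$. Normalising so that $w_0\le w_1\le w_2$ and shifting, this is equivalent to a toric (monomial) valuation $v$ centred at the coordinate point, with weights $\alpha_i=w_i-w_0\ge 0$, satisfying $v(D)>\tfrac{3n}{3}\sum_i\alpha_i$. In this normalisation the log discrepancy of the corresponding weighted blow-up is $A(v)=\sum_i\alpha_i$, so the condition reads $v(B)>A(v)$. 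Hence $D$ is unstable if and only if some toric valuation (for some choice of coordinates) has $A_{(\P^2,B)}(v)<0$. This is precisely the specialisation of Theorem \ref{thm:stab_linear_systems} and Lemma \ref{hypersurftoric} to $k=0$, $d=3n$, $n=2$, where the coefficient $\tfrac{3}{3n}=\tfrac1n$ appears. One direction is now immediate: if the pair is lc, every valuation has nonnegative log discrepancy, so $D$ is semistable.

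For the converse, I would assume the pair is not lc and produce a destabilising toric valuation. The tool here is the classification of non-lc singularities of plane curve pairs. Choose a point $p$ where $(\P^2,B)$ is not lc. Since $\deg B=3$, a non-lc centre is a point or a curve. A curve component of coefficient greater than $1$ would be a line or conic appearing in $D$ with multiplicity greater than $n$. Because the $C_i$ are pairwise distinct cubics, this forces a common line or conic in several $C_i$, and this case is easily seen to be destabilising with weights adapted to that component.

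For an isolated point $p$, I would use the fact that for plane curves the lc threshold at a point is computed by a weighted blow-up in suitable analytic coordinates (Varchenko / Kuwata-type results, or Newton-polygon non-degeneracy after coordinate change). The main obstacle is that these coordinates may be analytic rather than linear. Here the degree bound is the essential ingredient: the total degree is $3$, so the multiplicity $\operatorname{mult}_p B>2$, or the tangency data, forces the relevant weights to be realisable by a linear change of coordinates together with a quadratic substitution $y\mapsto y+cx^2$. I would handle this by a case analysis on $\operatorname{mult}_pB$ and on the weights $(1,1)$, $(1,2)$, $(1,3)$, $(2,3)$, and so on, using the bound $\deg B=3$ to exclude weights whose curve would need to have degree greater than $3$. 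This is the step I expect to be hardest.

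The second step concerns stability. Here the criterion is strictly semistable (that is, not stable) if and only if there is a nontrivial one-parameter subgroup with $\mu=0$, which translates to a toric valuation $v$ with $A(v)=0$, that is, an lc place. If the pair is klt there is no such place, so $D$ is stable. If the only lc centre is a smooth conic $Q$, then a toric valuation with $A=0$ would have to be centred on $Q$ and toric in some coordinates. Its centre would have to be a torus-invariant curve, hence a line, which is impossible since $Q$ is not a line. Its centre could also be a point $p\in Q$ with divisorial lc place; but along a smooth lc centre of coefficient $1$ in a surface, the only lc place is $Q$ itself, given that the pair is plt near $Q$ by adjunction and the degree count, as $B-Q$ has degree $1$ and meets $Q$ with total intersection $2$, which is small. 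So $D$ is stable. Conversely, if $D$ is stable but the pair has an lc centre other than a single smooth conic, namely a line, a point, or a singular or further component, I would exhibit a toric valuation with $A=0$ by the same case analysis as before, contradicting stability.
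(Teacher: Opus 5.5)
\paragraph{Easy directions.} These are fine and match the paper: lc $\Rightarrow$ semistable, and klt or unique lc centre a smooth conic $\Rightarrow$ stable, both follow directly from Lemma \ref{hypersurftoric}. In the conic case you don't need the plt/degree-count argument, and as stated it is wrong: a total intersection of $2$ does not bound the local intersections by $1$. The hypothesis itself already says that every divisor with $A_{\P^2,B}=0$ has centre the generic point of $Q$. Such a divisor is therefore $Q$, which is not toric.

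\paragraph{The gap: the converse.} This is the real content of the theorem. You must produce a divisor that is toric for a maximal torus of $\SL(3)$, i.e.\ a monomial valuation in \emph{linear} coordinates, with $A_{\P^2}(E)-\tfrac1n\mathrm{ord}_E(D)<0$ (resp.\ $\le 0$). Your route starts from a weighted blow-up computing the lct in analytic coordinates, and proposes to make it linear ``up to $y\mapsto y+cx^2$''. That substitution is not induced by any one-parameter subgroup of $\SL(3)$.
\begin{itemize}
\item For weights $(1,w)$ with $w\le 2$ it does not change the valuation at all, since $x^2$ has weight $\ge w$.
\item For $w>2$ it gives genuinely non-toric valuations, e.g.\ weights $(1,3)$ or $(1,4)$ in $(x,y-x^2)$, obtained by repeatedly blowing up along a smooth conic.
\end{itemize}
Non-toric divisors really do occur as lc places of these pairs (e.g.\ $\mathrm{ord}_Q$ when $C_i=Q+L_i$, which is why the stability clause singles out conics). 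So you cannot hope the lct-computing divisor is toric. What must be shown is this: whenever a non-toric divisor has negative log discrepancy (resp.\ zero, with centre not a smooth conic), some toric divisor does as well. Your announced case analysis over $\mathrm{mult}_pB$ and weight vectors does not address this. This is exactly where the hypothesis that the $C_i$ are cubics has to enter, and your sketch never uses it there.

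\paragraph{How the paper does it.} The paper supplies this step birationally, with no analytic coordinates. Take a non-toric $E$ violating the inequality, and the chain of point blow-ups $X_l\to\cdots\to X_0=\P^2$ extracting it.
\begin{itemize}
\item \emph{Curves ($l=0$) are handled by degree.} A conic occurs in $D$ with multiplicity at most $n$, and distinctness of the $C_i$ excludes a cubic of multiplicity $n$. Hence the only non-toric curve possibility is a smooth conic lc centre.
\item \emph{Short chains.} $E_1$ and $E_2$ are always toric. So the first non-toric $E_j$ has $j\ge 3$, and one may assume $A_{\P^2,B}(E_{j-1})\ge 0$ for the toric $E_{j-1}$.
\item \emph{Local picture at the centre of $E_j$.} By Lemma \ref{lema--non-toric}, the strict transform $\tilde D$ of the non-toric part of $D$ meets $E_{j-1}$ with local intersection at most $n$ at every point. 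Moreover, the centre of $E_j$ lies on no toric curve of $X_{j-1}$ other than $E_{j-1}$.
\item \emph{Conclusion.} Inversion of adjunction \cite{kawakita2007inversion} for $(X_{j-1},E_{j-1}+\tfrac1n\tilde D)$ shows that this centre can be neither a non-lc centre nor, when $A_{\P^2,B}(E_{j-1})>0$, an lc centre. This contradicts the choice of $E$.
\end{itemize}
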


\begin{cor}[= Corollary \ref{cor--cubic--linear--sys}]\label{cor:main2}
    Let $\calL$ be a $k$-dimensional linear system of plane cubic curves for any $k\ge1$.
    Then, $\calL$ is GIT (semi)stable with respect to the natural action of $
    \SL(3)$ if and only if for any choice of generators $C_1,\ldots, C_{k+1}\in\mathcal{L}$ the log pair $\left(\mathbb{P}^2, \tfrac{1}{k+1}(C_1+\ldots+C_{k+1})\right)$ is klt (resp.~lc).
\end{cor}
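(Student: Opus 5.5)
The plan is to derive Corollary~\ref{cor:main2} from Theorem~\ref{thm:main2}, combined with the criterion of \cite{hz} for GIT stability of linear systems. That criterion (Theorem~\ref{thm:stab_linear_systems} together with Lemma~\ref{hypersurftoric}) says the following for a $k$-dimensional linear system $\calL$ of degree $d$ in $\P^n$. The system $\calL$ is GIT semistable (resp.\ stable) if and only if, for every choice of generators $H_1,\ldots,H_{k+1}$, the pair $(\P^n,\tfrac{n+1}{d(k+1)}(H_1+\ldots+H_{k+1}))$ is lc (resp.\ klt) with respect to all toric valuations. Here a toric valuation is one that is toric for some choice of coordinates. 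For plane cubics we have $n=2$ and $d=3$, so the coefficient is exactly $\tfrac{1}{k+1}$. The pair in the corollary is therefore precisely the pair appearing in that criterion.

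\textbf{Step 1: reduce toric valuations to all valuations.} The criterion only tests toric valuations, while the corollary asks for the genuine lc/klt conditions. One direction is immediate, since lc (resp.\ klt) implies the same condition for toric valuations, so lc/klt for all generator choices gives GIT semistability/stability. For the converse, assume $\calL$ is GIT semistable and set $C=C_1+\ldots+C_{k+1}$, a curve of degree $3(k+1)$.
\begin{itemize}
\item The generators are pairwise distinct, since they are linearly independent.
\item Semistability of $\calL$ implies GIT semistability of the curve $C$. Indeed, a destabilizing one-parameter subgroup for $C$ has Hilbert--Mumford weight equal to the sum of the weights of the $C_i$. Also, the minimum weight over a basis adapted to the filtration is at most the weight of any basis, by the usual Plücker-coordinate argument.
\end{itemize}
Theorem~\ref{thm:main2}, applied with $n=k+1\ge 2$, then shows that $(\P^2,\tfrac{1}{k+1}C)$ is lc.

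\textbf{Step 2: the stable case.} Now assume $\calL$ is stable; we must upgrade lc to klt. By Theorem~\ref{thm:main2}, the pair is either klt, or lc with a single lc center which is a smooth conic $Q$. I will rule out the second case. If $Q$ is an lc center of coefficient $\tfrac{1}{k+1}$, then $Q$ must appear in $C$ with multiplicity $k+1$. Since each $C_i$ is a cubic, $Q$ is then a component of every $C_i$, so every member of $\calL$ has the form $Q+L$ with $L$ a line. Choose coordinates with $Q=\{xz=y^2\}$ and take the one-parameter subgroup $\mathrm{diag}(t,1,t^{-1})$. It preserves $Q$, and the residual lines can be made to contribute nonpositive total weight after a suitable choice of generators. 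This contradicts the strict inequality required for stability, which forces the klt alternative.

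The main obstacle is Step~1's comparison between GIT (semi)stability of the linear system $\calL$ and of the single curve $C$. The statement of Theorem~\ref{thm:main2} is phrased for the curve, so the Hilbert--Mumford weight inequality has to be checked carefully for arbitrary generators. Alternatively, I would bypass this by rerunning the proof of Theorem~\ref{thm:main2} directly: it presumably shows that non-lc points of such cubic pairs are always detected by toric (weighted blow-up) valuations in suitable coordinates, and that fact plugs straight into Theorem~\ref{thm:stab_linear_systems}.
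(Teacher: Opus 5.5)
Your proposal is correct. Its first half (the semistable case, and reducing the stable case to ruling out a smooth-conic lc center) is exactly the paper's argument. The comparison you call the ``main obstacle'' is just Theorem~\ref{thm:stab_linear_systems}, which you may cite as stated rather than rederive from Hilbert--Mumford weights.

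You diverge from the paper in how you exclude the conic lc center.

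\textbf{The paper's route.} Like you, it notes that $Q$ is the fixed part of $\calL$ and that $k\le 2$. It then chooses new generators $C'_i$: the residual line of $C'_1$ is tangent to $Q$ at a point $p$, and for $k=2$ the residual line of $C'_2$ also passes through $p$. It blows up $p$ and then the point where the exceptional curve meets the strict transform of $Q$. The resulting exceptional divisor $E$ is toric and has log discrepancy $\le 0$. So $C'_1+\ldots+C'_{k+1}$ is not stable by Lemma~\ref{hypersurftoric}, and hence $\calL$ is not stable. This stays inside the toric-valuation framework and names a generator choice that fails to be klt.

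\textbf{Your route.} You show directly that $\calL=Q\cdot W$ has a positive-dimensional stabilizer, which contradicts stability. This is shorter and more conceptual, and avoids computing discrepancies.

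\textbf{What needs tightening.} The sentence about the residual lines contributing nonpositive weight ``after a suitable choice of generators'' should be corrected. The Hilbert--Mumford weight of $\calL$ does not depend on the generators; what you need is a suitable choice of coordinates inside $\mathrm{Aut}(Q)$.
\begin{itemize}
\item For $k=2$, $W$ is the whole space of linear forms, so $\calL$ is invariant under all of $\mathrm{Aut}(Q)$.
\item For $k=1$, $W$ is the pencil of lines through a point $p$. The group $\mathrm{Aut}(Q)\cong\mathrm{PGL}_2$ moves $p$ to $(0:1:0)$ if $p\notin Q$, or to $(1:0:0)$ if $p\in Q$. Both points are fixed by $\lambda=\mathrm{diag}(t,1,t^{-1})$, so $\lambda$ fixes $\calL$, and one of $\mu(\calL,\lambda^{\pm1})$ is $\le 0$.
\end{itemize}
More simply, the stabilizer of $p$ in the $3$-dimensional group $\mathrm{Aut}(Q)$ is positive-dimensional, and it stabilizes $\calL$.
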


We note that the above corollary is in fact a combination of Theorem \ref{thm:main2} and \cite[Theorem 1.1]{hz} (see also Theorem \ref{thm:stab_linear_systems}). When $k=1$, it provides an alternative description of Miranda's result \cite{stabMiranda} on GIT stability of pencils of cubic curves. When $k=2$, we see that the GIT stability of $G(\N,p)$ coincides with the log canonicity of the log pair $(\P^2,\tfrac{1}{3}(C_1+C_2+C_3))$ for any choice of generators $C_1,C_2$ and $C_3$ for the net.

\vspace{2mm}
\noindent{\bf The $G(\N,p)$--$\N$ correspondence.} To compare the GIT stability of $\N$ and $G(\N,p)$, we further use a result on log Calabi-Yau fibrations proved in \cite{ambro2004shokurov} (Proposition \ref{prop:cubics-to-quadrics:git}).
The key observation is that if we choose divisors $Q'_1$, $Q'_2$ and $Q'_3$ in  $\mathrm{Bl}_p(\P^3)$ that correspond to a choice of generators $Q_1$, $Q_2$ and $Q_3$ for $\N$, where $(\mathrm{Bl}_p(\P^3),\tfrac{2}{3}(Q'_1+Q_2'+Q'_3))$ and $(\P^3,\tfrac{2}{3}(Q_1+Q_2+Q_3))$ are log crepant, and we consider generators $C_{ij}$ for $G(\N,p)$ defined as the image of $Q'_{i}\cap Q'_j$, then the projection of lines $(\mathrm{Bl}_p(\P^3),\tfrac{2}{3}(Q'_1+Q_2'+Q'_3))\to \mathbb{P}^2$ is a log Calabi-Yau fibration (i.e.~a lc-trivial fibration in the sense of \cite{ambro2004shokurov}) such that the moduli $\mathbb{Q}$-divisor is zero and the discriminant $\mathbb{Q}$-divisor coincides with $\tfrac{1}{3}(C_{12}+C_{23}+C_{31})$. This means that the log canonicity of $(\mathrm{Bl}_p(\P^3),\tfrac{2}{3}(Q'_1+Q_2'+Q'_3))$ and that of $(\P^2,\tfrac{1}{3}(C_{12}+C_{23}+C_{31}))$ are the same, which helps us to compare their GIT stability. In particular, we obtain the following result deduced from Theorems \ref{thm:main} and \ref{thm--equiv.good.nets} and Corollary \ref{cor:main2}, which provides a relationship between the log canonicity and GIT stability of good nets of quadric surfaces and the associated nets of cubic curves.

\begin{thm}\label{thm:main3}
 Let $\N$ be a good net of quadrics in $\P^3$ and let $p$ be a $\P^3$-base point.
 Then the conditions of Theorem \ref{thm:main} are also equivalent to the following.
 \begin{itemize}
     \item[{\it (iv)}] Let $Q_1$ be a general element of $\mathcal{N}$. Then for any choice of two other generators $Q_2$ and $Q_3\in \mathcal{N}$, $(\mathbb{P}^3,\tfrac{2}{3}(Q_1+Q_2+Q_3))$ is klt (resp.~lc).
     \item[{\it (v)}] 
     For any choice of three generators $C_1$, $C_2$ and $C_3\in G(\mathcal{N},p)$, $(\mathbb{P}^2,\tfrac{1}{3}(C_1+C_2+C_3))$ is klt (resp.~lc).
 \end{itemize}
\end{thm}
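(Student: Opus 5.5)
Theorem \ref{thm:main} already gives (i)$\Leftrightarrow$(ii)$\Leftrightarrow$(iii) for a good net, so the plan is to add (v) through the net of cubics and (iv) through the projection from $p$. For (ii)$\Leftrightarrow$(v), note that $G(\N,p)$ is a $2$-dimensional linear system of plane cubics. Corollary \ref{cor:main2} with $k=2$ then says directly that $G(\N,p)$ is GIT stable (resp.~semistable) if and only if $(\P^2,\tfrac13(C_1+C_2+C_3))$ is klt (resp.~lc) for every choice of generators. So (ii)$\Leftrightarrow$(v) is immediate, and the remaining task is (iv)$\Leftrightarrow$(v).

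For (iv)$\Leftrightarrow$(v) we use the lc-trivial fibration described in the introduction.
\begin{enumerate}
\item Fix generators $Q_1,Q_2,Q_3$ of $\N$. Since $p$ is a base point, each $Q_i$ passes through $p$ with multiplicity $1$; multiplicity $1$ follows from goodness, because at most one member of $\N$ is singular at $p$ and $Q_1$ is taken general. Hence on $\pi:\mathrm{Bl}_p\P^3\to\P^3$ the pair $(\mathrm{Bl}_p\P^3,\tfrac23(Q_1'+Q_2'+Q_3'))$ is log crepant to $(\P^3,\tfrac23(Q_1+Q_2+Q_3))$, where the $Q_i'$ are strict transforms. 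The check is the discrepancy count $2-3\cdot\tfrac23=0$ for the exceptional divisor $E$.
\item Apply Proposition \ref{prop:cubics-to-quadrics:git} to the projection $\mathrm{Bl}_p\P^3\to\P^2$. It gives moduli part zero and discriminant $\tfrac13(C_{12}+C_{23}+C_{31})$, where $C_{ij}$ is the image of $Q_i'\cap Q_j'$. By Ambro's result \cite{ambro2004shokurov}, the upstairs pair is klt (resp.~lc) if and only if $(\P^2,\tfrac13(C_{12}+C_{23}+C_{31}))$ is klt (resp.~lc).
\item The map $(Q_1,Q_2,Q_3)\mapsto (C_{12},C_{23},C_{31})$ gives, via Proposition \ref{qtoc}, a bijection between bases of $\N$ and bases of $G(\N,p)$. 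This is because $G(\N,p)\cong\wedge^2$ of the net, so that a basis $Q_1,Q_2,Q_3$ maps to the basis $Q_1\wedge Q_2,\ Q_2\wedge Q_3,\ Q_3\wedge Q_1$. Consequently, every triple of generators of $G(\N,p)$ arises in this form.
\end{enumerate}

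Combining the three steps shows that (v) is equivalent to requiring klt (resp.~lc) of $(\P^3,\tfrac23(Q_1+Q_2+Q_3))$ for \emph{all} bases of $\N$. By Theorem \ref{thm:stab_linear_systems} this is exactly (i), which gives a second route and a consistency check.

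The remaining point is the clause "$Q_1$ general" in (iv). One direction is immediate: the condition for all bases implies it for bases containing a general $Q_1$. For the converse, assume the condition holds for all bases with $Q_1$ general, and suppose some basis $(Q_1^0,Q_2^0,Q_3^0)$ violates it.
\begin{enumerate}
\item By lower semicontinuity of log canonical thresholds, the failure persists on a neighbourhood of that basis only if the bad locus is open. That is the wrong direction, so semicontinuity alone does not help.
\item Instead, use that the divisor $Q_1+Q_2+Q_3$ depends on the basis, while the toric-valuation criterion of \cite{hz} (Lemma \ref{hypersurftoric}) detects instability through a one-parameter subgroup destabilizing the whole net.
\item Concretely, if $\N$ is unstable, pick a destabilizing one-parameter subgroup and an adapted basis for it. The claim to prove is that a basis destabilizing with respect to the associated toric valuation can be chosen with $Q_1$ general, by replacing $Q_1$ with a general combination. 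The justification is that the valuation of a general element equals the minimum over the net, so $Q_1$ contributes the smallest order. Then we need to show that the log discrepancy inequality still fails, possibly after reordering so that the special members are $Q_2,Q_3$.
\end{enumerate}

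I expect step 3 above, namely the weight-filtration bookkeeping that keeps the destabilizing inequality under this replacement, to be the main obstacle. If it fails, the fallback is to use goodness directly: restrict to a general $Q_1$ through $p$, which is smooth at $p$, and compare lc thresholds via inversion of adjunction.
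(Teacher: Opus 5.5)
There is a genuine gap: nothing in your proposal shows that (iv) implies the other conditions. Granting your fibration step, you obtain (ii)$\Leftrightarrow$(v) and (v)$\Leftrightarrow$(iv$'$), where (iv$'$) asks for klt (resp.\ lc) of $(\P^3,\tfrac23(Q_1+Q_2+Q_3))$ for \emph{every} basis of $\N$. But (iv) only constrains bases containing a general $Q_1$. So (iv$'$)$\Rightarrow$(iv) is the trivial direction, and the return implication is exactly the step you leave open.

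Your remark that (iv$'$) is ``exactly (i)'' by Theorem~\ref{thm:stab_linear_systems} does not close this gap, and it is only half right. That theorem concerns GIT stability of the sextic $Q_1+Q_2+Q_3$, and Lemma~\ref{hypersurftoric} only tests toric divisors. So (iv$'$) implies (i), but the converse does not follow from these two results.

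The paper never passes through (iv$'$). In Theorem~\ref{thm--equiv.good.nets} it closes the cycle ``$\N$ (semi)stable $\Rightarrow$ $\Delta(\N)$ (semi)stable $\Rightarrow$ $G(\N,p)$ (semi)stable $\Rightarrow$ (iv) $\Rightarrow$ $\N$ (semi)stable''. The steps use, in order, Proposition~\ref{prop: stab_quartic_net}; Corollary~\ref{cor--quartic-to-net-cubics} together with Proposition~\ref{extended}; Proposition~\ref{prop:cubics-to-quadrics:git}, needed in one direction only and only for general $Q_1$; and \cite[Corollary 4.1]{hz} for the step out of (iv). Condition (v) is then attached via Corollary~\ref{cor:main2}, as you do.

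Your idea in step 3 is the right one, but run it from the destabilizing one-parameter subgroup rather than by modifying a given bad basis. Suppose $\N$ is unstable (resp.\ not stable), and pick $\lambda$ with $\mu(\N,\lambda)<0$ (resp.\ $\le 0$). Filter $V_{\N}$ by $\lambda$-leading weight. The elements whose leading weight differs from that of a general element, together with $0$, form a proper subspace $F\subsetneq V_{\N}$. Any $Q_1\notin\P(F)$ extends to a basis compatible with the filtration. For such a basis the $\lambda$-leading terms are linearly independent, so the inequality $\mu(Q_1+Q_2+Q_3,\lambda)\ge\mu(\N,\lambda)$ of \cite[Lemma 3.1]{hz} becomes an equality. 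Lemma~\ref{hypersurftoric} with $(n,d)=(3,6)$, threshold $\tfrac46=\tfrac23$, then gives a toric $E$ with $A_{\P^3}(E)<\tfrac23\,\mathrm{ord}_E(Q_1+Q_2+Q_3)$ (resp.\ $\le$). This holds for every $Q_1$ in the dense open set $|\N|\setminus\P(F)$, so (iv) fails. This is what \cite[Corollary 4.1]{hz} supplies.

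Two smaller corrections.
\begin{enumerate}
\item Goodness only says that at most one member of $\N$ is singular at $p$. Since $Q_2,Q_3$ are arbitrary, one of them may be that member, and then strict transforms are not log crepant to $(\P^3,\tfrac23\sum Q_i)$. Use $Q_i'=g^*Q_i-E$ as the paper does; this works for every basis.
\item The paper identifies the discriminant with $\tfrac13(C_{12}+C_{23}+C_{31})$ using lc of the base pair, to get $m\le 3$. Your two-sided, all-bases use of Proposition~\ref{prop:cubics-to-quadrics:git} would therefore require redoing that local computation without this hypothesis.
\end{enumerate}
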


In contrast, to obtain Corollary \ref{cor:main}, we also have to deal with nets whose discriminant is a double conic, which are not good by Proposition \ref{prop:main}, and we are not able to use the types of birational arguments described so far. Instead, we use a complete classification of GIT unstable nets of quadrics (Theorem \ref{unstable nets}) obtained by the algorithm in \cite{Pap22} and the computational code \cite{Pap_code} developed by the second-named author (see Appendix \ref{section--Theo's-calculation}).

\subsection*{Organization of the paper} We now proceed to explain how the paper is organized. In Section \ref{sec:background}, we present the necessary background concepts and establish the notations used throughout the paper. In Section \ref{sec:construction_extended}, we extend the classical threefold cycle of correspondences alluded to in the abstract and explained in Section \ref{sec:cycle}. Section \ref{sec:git_cubics} is devoted to the proof of Theorem \ref{thm:main2} and Section \ref{sec:new_results} to the proof of our main result. In Section \ref{sec:questions}, we collect some questions to be addressed in future work that naturally arise from this project. Finally, Appendix \ref{section--Theo's-calculation} describes all unstable nets of quadric surfaces in $\P^3$ geometrically and studies their corresponding discriminant quartics.

\subsection*{Acknowledgments}
The authors thank Yuchen Liu and Junyan Zhao for helpful comments regarding Remark \ref{rmk-lz}.
The authors would like to thank the Isaac Newton Institute for Mathematical Sciences, Cambridge, for support and hospitality during the programme New Equivariant Methods in Algebraic and Differential Geometry, where work on this paper was undertaken. 
This work was supported by EPSRC grant EP/R014604/1.
M.H.~was 
supported by Royal Society International Collaboration Award ICA\textbackslash1\textbackslash231019. T.P.~was supported by a Shuimu Scholarship from Tsinghua University.

\section{Background and notations}\label{sec:background}

This section presents the relevant background material needed for our exposition. We work over the field of complex numbers $\mathbb{C}$ and use the following notations throughout this paper.

\begin{enumerate}
[({N}\arabic{enumi})]
\item We call $X$ a {\it variety} if $X$ is a quasi-projective irreducible and reduced scheme over $\mathbb{C}$. Unless otherwise stated, we call a closed point $p\in X$ a point for simplicity. 
\item $\mathbb{M}_{r\times s}$ denotes the set of $r\times s$-matrices of complex numbers.
\item $\P^{r,s}$ denotes the space parameterizing constellations of infinitely near points associated to a zero-dimensional subscheme of $\P^r$ of length $s$. More precisely, up to ordering, each point $\calP\in\P^{r,s}$ is represented by a set 
\[
S_{\calP}\coloneqq\{p_1^{(1)},\ldots,p_1^{(m_1)},\ldots,p_{\ell}^{(1)},\ldots,p_{\ell}^{(m_{\ell})}\},
\]
where $m_1+\ldots+m_{\ell}=s$, each $p_j^{(1)}$ is a point in $\P^r$ and each $p_j^{(i+1)}$ is infinitely near to the previous point $p_j^{(i)}$ (of order one), that is, $p_j^{(i+1)}$ is a point on the exceptional divisor of the blowing up at $p_j^{(i)}$. For simplicity, we will also write $p_j$ instead of $p_j^{(1)}$.
\item Given $\calP\in\P^{r,s}$, we denote by $\calL_d(\calP)$ the linear system of the hypersurfaces of degree $d$ in $\P^r$ whose base locus is given by $S_{\calP}$. Moreover, we refer to the points $p_1, 
\ldots,p_{\ell}$ as {\it $\P^r$-base points} or {\it base points in $\P^r$} of $\calL_d(\calP)$. \label{n2}
\item We denote the space of all $k$-dimensional linear systems of hypersurfaces of degree $d$ in $\P^n$ by $\X$, which can be regarded as a Grassmannian variety.\label{n3}
\item Given a point $\mathcal{L}\in \mathcal{X}_{k,d,n}$ we often write $|\calL|$ to emphasize that we are considering a linear system -- a projective space parameterizing the lines through the origin of some $k+1$-dimensional linear subspace of $\H^0(\P^n,\mathcal{O}_{\P^n}(d))$.  
Then $V_{\calL}\subset \H^0(\P^n,\mathcal{O}_{\P^n}(d))$ denotes the corresponding vector space and $|\mathcal{L}|^{\vee}$ will denote the dual projective space parameterizing hyperplanes. 
Furthermore, we say that $k+1$-elements $H_1,\ldots,H_{k+1}\in |\calL|$ {\it generate} $\calL$ if choosing $f_i\in V_{\calL}$ that represents $H_i$ for each $i=1,\ldots,k+1$, we have that $V_{\calL}=\text{Span}(f_1,\ldots,f_{k+1})$. \label{n4} 
\item We say that a $k$-dimensional linear system $\calL$ is a {\it net} (resp.~{\it pencil}) if $k=2$ (resp.~$k=1$).
\end{enumerate}

\subsection{The relevant geometric invariant theory}\label{sec:git}

We start by recalling some basic definitions and results from geometric invariant theory (GIT). For more details, see \cite{mukai}, \cite{GIT}, or \cite{hoskins_GIT}. 

Let $X$ be a projective variety on which the group $ \SL(n+1)$ acts and let $\mathscr{A}$ be an ample $ \SL(n+1)$-linearized line bundle on $X$. If $\H^0(X, \mathscr{A}^{\otimes k})^{\SL(n+1)}$ denotes subspace of all $\SL(n+1)$-invariant sections of $\H^0(X, \mathscr{A}^{\otimes k})$, then the projective variety $$\Proj\bigoplus_{k\in\mathbb{Z}_{\ge0}}\H^0(X, \mathscr{A}^{\otimes k})^{\SL(n+1)}$$ is called the {\it GIT quotient of $X$ with respect to the $\SL(n+1)$-action} and is denoted by $X\sslash \SL(n+1)$ in this paper. This variety $X\sslash \SL(n+1)$ is the categorical quotient of the locus of \textit{semistable} points by $\SL(n+1)$, where a semistable point is defined as follows.

\begin{defi}\label{def:stab}
 A closed point $x\in X$ is \textit{GIT semistable with respect to $\mathscr{A}$} if there exist some $m\in\mathbb{Z}_{>0}$ and an $\SL(n+1)$-invariant section $s\in \H^0(X,\mathscr{A}^{\otimes m})$ such that $s(x)\neq 0$. In addition, if the $ \SL (n+1)$-orbit of $x$ is closed with a finite stabilizer group, then we say that the point $x$ is \textit{GIT stable with respect to $\mathscr{A}$}. We say that a GIT semistable point $x$ with closed $ \SL (n+1)$-orbit is {\it GIT polystable with respect to $\mathscr{A}$}.
\end{defi}

When the projective variety $X$ has Picard number one, the above notions of stability are independent of the choice of $\mathscr{A}$, and we say that $x$ is (semi/poly)stable (see e.g., \cite[Proposition 1.4, Corollary 1.20]{GIT}). If we denote the open locus of semistable (resp.~stable) points by $X^{\mathrm{ss}}\subset X$ (resp.~$X^{\mathrm{s}}\subset X$), then points in $X \backslash X^{\mathrm{ss}}$ (resp. $X \backslash X^{\mathrm{s}}$) will be called {\it unstable} (resp. {\it not stable}). Moreover, taking two points $x,y\in X^{\mathrm{ss}}$ and letting $O_x$ and $O_y$ be the orbits of $x$ and $y$, respectively, as in the moduli theory of vector bundles, we say that $x$ and $y$ are {\it S-equivalent} if $\overline{O_x}\cap\overline{O_y}\ne\emptyset$, where $\overline{O_x}$ and $\overline{O_y}$ are the Zariski closures. 

In general, we can detect (semi)stability of a point $x\in X$ by a numerical invariant known as the Hilbert-Mumford weight, which can be defined as follows. We say that $\lambda$ is a {\it one-parameter subgroup} of a group $G$ if $\lambda: \C^{\times}\to G$ is a homomorphism of algebraic groups.
Given any one-parameter subgroup $\lambda: \C^{\times}\to \SL(n+1)$ and a fixed (not necessarily ample) $\SL(n+1)$-linearized line bundle $\mathscr{M}$ on $X$, the corresponding {\it Hilbert-Mumford weight} of $x\in X$ with respect to $\lambda$ and $\mathscr{M}$ is the quantity
 \[
 \mu^{\mathscr{M}}(x,\lambda)\coloneqq -\textrm{the weight of the action of }\mathbb{C}^\times \textrm{ on }\mathscr{M}\otimes k(y) \textrm{ via }\lambda,
 \]
  where $y$ is the unique point in $X$ such that $y=\lim_{t\to0}\lambda(t)\cdot x$, and $k(y)$ is the corresponding residue field. The Hilbert-Mumford criterion \cite[Theorem 2.1]{GIT} states that a closed point $x\in X$ is GIT semistable (resp. stable) with respect to an ample $ \SL (n+1)$-linearized line bundle $\mathscr{A}$ if and only if  $\mu^{\mathscr{A}}(x,\lambda)\geq 0$ (resp. $\mu^{\mathscr{A}}(x,\lambda)>0$) for any non-trivial one-parameter subgroup $\lambda: \C^{\times}\to \SL(n+1)$.
  
We finally note that given $\lambda$, there exists a maximal torus $T$ of $\SL(n+1)$ such that $\lambda$ factors through $T$ and all maximal tori are conjugate to each other.
We write  $\lambda=\mathrm{Diag}(\alpha_0,\ldots,\alpha_n)$ if  $\lambda$ factors through the diagonal torus and 
\[
\lambda(t)=\begin{pmatrix}
           t^{\alpha_0} & 0 & \ldots & 0 \\ 
           0 & t^{\alpha_1} & \ldots & 0 \\ 
           \vdots & \vdots & \ddots & \vdots\\ 
           0 & 0 & \ldots & t^{\alpha_n}
        \end{pmatrix}.
\]

\subsubsection{GIT for linear systems of hypersurfaces}\label{3git}

Fix now a positive integer $n$ and let $V$ denote the $n+1$-dimensional vector space $\H^0(\mathbb{P}^n,\mathcal{O}_{\mathbb{P}^n}(1))$. Then for each integer $d\geq 1$, the projective space $\P(S^{d}V^{\vee})$ parametrizes hypersurfaces of degree $d$ on $\P(V)=\P^n$. In particular, the space $\X$ of $k$-dimensional linear systems of hypersurfaces of degree $d$ in $\P^n$ can be embedded in the projective space $\mathbb{P}^{N}=\mathbb{P}(\Lambda^{k+1} S^{d}V^{\vee})$ via Pl\"{u}cker coordinates. The natural action of the group $G=\SL(V)$ on $V\simeq \C^{n+1}$ induces an action on this large projective space, hence on the invariant subvariety $X=\X$. 
We set $\mathscr{A}:=\mathcal{O}_{\P^N}(1)|_{\X}$ and let $\mu(x,\lambda)$ denote $\mu^{\mathscr{A}}(x,\lambda)$ for any $x\in \X$ and one-parameter subgroup $\lambda$ for simplicity.
The associated GIT problem was addressed in  \cite{hz} and \cite{Pap22, compint}.

We recall the subsequent general criterion.

\begin{thm}[{\cite[Theorem 1.1]{hz}}]\label{thm:stab_linear_systems}
    A linear system $\calL \in \X$ is GIT stable (resp. semistable) if and only if for any choice of generators $H_1,\ldots, H_{k+1}\in \calL$ the degree $d(k+1)$ hypersurface $H_1+\ldots+H_{k+1}$, viewed as an element of $\mathbb{P}(S^{d(k+1)}V^{\vee})$, is GIT stable (resp. semistable).
\end{thm}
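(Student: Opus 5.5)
We prove the statement through the Hilbert--Mumford criterion, working torus by torus. The aim is to compare the weight of the Plücker point of $\calL$ with the weights of the hypersurfaces $H_1+\ldots+H_{k+1}$.

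Fix a one-parameter subgroup $\lambda$. After conjugating we may assume $\lambda=\mathrm{Diag}(\alpha_0,\ldots,\alpha_n)$ with $\sum\alpha_i=0$. Each monomial $x^m$ of degree $d$ then carries the weight $\langle m,\alpha\rangle$. For $f\in S^dV^\vee$ we let $w_\lambda(f)$ be the minimal weight among the monomials occurring in $f$, so that $\mu(f,\lambda)=-w_\lambda(f)$ up to the sign convention.

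The first step is to compute the weight of $\calL$. The Plücker coordinates of $V_\calL$ are indexed by $(k+1)$-subsets of monomials, and the weight of such a coordinate is the sum of the weights of its monomials. The limit point $\lim_{t\to0}\lambda(t)\cdot\calL$ is obtained by Gaussian elimination with respect to the weight filtration. Concretely, we choose a basis $f_1,\ldots,f_{k+1}$ of $V_\calL$ whose initial (lowest-weight) terms are linearly independent, so the $f_i$ have pairwise distinct initial monomials after row reduction in the weight order. For such an adapted basis, the minimal weight of a nonvanishing Plücker coordinate equals $\sum_i w_\lambda(f_i)$. Moreover, for an arbitrary basis this sum is at most the minimal Plücker weight. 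It follows that
\[
\mu(\calL,\lambda)=-\max_{\text{bases}}\sum_i w_\lambda(f_i).
\]
On the other side, the initial form of a product is the product of initial forms, so $w_\lambda(f_1\cdots f_{k+1})=\sum_i w_\lambda(f_i)$. Hence
\[
\mu(\calL,\lambda)=\min_{\text{generators }H_i}\mu\bigl(H_1+\ldots+H_{k+1},\lambda\bigr),
\]
where the minimum is attained at a $\lambda$-adapted basis.

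The second step applies this identity in both directions. Suppose $\calL$ is not (semi)stable. Then some $\lambda$ gives $\mu(\calL,\lambda)\le0$ (resp.\ $<0$). The adapted basis for this $\lambda$ yields generators whose sum has the same weight, so that sum is not (semi)stable. Conversely, suppose some choice of generators gives a sum that is not (semi)stable via some $\lambda$. Then $\mu(\calL,\lambda)$ is at most that weight, so $\calL$ is not (semi)stable either. The Hilbert--Mumford criterion recalled above therefore gives the equivalence. Stability and semistability are handled simultaneously, since the same identity compares strict and non-strict inequalities. Here $\mathscr A$ is the restriction of $\mathcal O(1)$ and the hypersurfaces are polarized by $\mathcal O(1)$ on $\P(S^{d(k+1)}V^\vee)$, so the linearizations match.

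The main obstacle is the Plücker weight computation in the first step. We must show that the limit under $\lambda$ is the span of the initial forms of an adapted basis, and that the lowest nonzero Plücker weight equals the sum of the initial weights. Our plan is to write the $f_i$ in echelon form with respect to a total order refining the weight order. Then the leading Plücker coordinate is the determinant of the initial-monomial minor, which is nonzero, while every other coordinate has weight at least this sum. Ties between equal weights are handled by taking initial forms rather than monomials and choosing the basis so that these initial forms are linearly independent.
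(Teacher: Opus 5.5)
Your proposal is correct and is essentially the argument behind this result. This paper does not reprove the theorem but quotes it from \cite{hz}, and it uses the same two ingredients elsewhere: the inequality $\mu(\calL,\lambda)\le\mu(H_1+\ldots+H_{k+1},\lambda)$ for arbitrary generators (\cite[Lemma 3.1]{hz}), and equality for a suitably chosen, i.e.\ $\lambda$-adapted, set of generators (the proof of \cite[Lemma 3.2]{hz}), combined with the Hilbert--Mumford criterion in both directions exactly as you do.
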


Concerning GIT polystability, we can further show the following, which is of independent interest and complements the above criterion.

\begin{thm}
Let $\calL\in \X$ and denote by $G_{\calL}$ its stabilizer subgroup under the action of $\SL(n+1)$.  Choose $T$ a maximal subtorus of $G_{\calL}$ and $H_{1}$ a $T$-invariant hypersurface in $\calL$. Suppose further that $\calL$ is GIT semistable. Then $\calL$ is GIT polystable if and only if the hypersurface $H_{1}+H_{2}+\ldots+H_{k+1}$ is GIT polystable for any choice of $T$-invariant elements $H_{2},\ldots,H_{k+1}$ in $\calL$ such that $H_1,H_2,\ldots,H_{k+1}$ generate $\calL$.
\end{thm}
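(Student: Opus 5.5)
We reduce polystability to a statement about the Hilbert--Mumford weights on one side and about closed orbits on the other. We use the standard characterization: a semistable point $x$ is polystable if and only if for every one-parameter subgroup $\lambda$ with $\mu(x,\lambda)=0$, the limit $\lim_{t\to0}\lambda(t)\cdot x$ lies in the orbit $G\cdot x$. A convenient equivalent form, via Kempf/Luna theory, is this: $x$ is polystable if and only if every $\lambda$ with $\mu(x,\lambda)=0$ is conjugate, by an element of $G$ fixing the limit appropriately, into the stabilizer $G_x$. More concretely, $\lim_{t\to 0}\lambda(t)x$ then exists in $G\cdot x$, and one can replace $\lambda$ by a $G$-conjugate lying in a maximal torus of $G_x$.

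The first step is to recall from the proof of Theorem \ref{thm:stab_linear_systems} in \cite{hz} how weights compare. For a one-parameter subgroup $\lambda$ diagonalized in coordinates, the Plücker weight $\mu(\calL,\lambda)$ equals the minimum (in the sign convention fixed here) of $\mu(H_1+\dots+H_{k+1},\lambda)$ over generators. This minimum is attained exactly by generators $H_i$ forming a $\lambda$-adapted basis, namely one whose leading terms are the distinct weight pieces of the filtration of $V_{\calL}$ induced by $\lambda$. In particular, $\mu(\calL,\lambda)=\mu(\sum H_i,\lambda)$ for such a basis. The limit $\lim_{t\to 0}\lambda(t)\cdot\calL$ is then spanned by the initial forms of the $H_i$. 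If the $H_i$ are $\lambda$-invariant, so that each $H_i$ is a $\lambda$-eigenvector, then $\lambda$ fixes both $\calL$ and $\sum H_i$.

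For the direction ($\Leftarrow$), suppose all $T$-invariant sums are polystable and $\calL$ is semistable but not polystable. Then there is $\lambda$ with $\mu(\calL,\lambda)=0$ whose limit $\calL_0$ is polystable and not in the orbit of $\calL$. Choose a $\lambda$-adapted basis. The hypersurface $F=\sum H_i$ is semistable by Theorem \ref{thm:stab_linear_systems} and has $\mu(F,\lambda)=\mu(\calL,\lambda)=0$. Its limit is the sum of the initial forms, which span $\calL_0$. To invoke the hypothesis we need the $H_i$ to be $T$-invariant and $H_1$ to be the prescribed one. So we first use Kempf's optimal destabilizing theory in the polystable-degeneration form (the Luna/Kempf parabolic $P(\lambda)$) to choose $\lambda$ commuting with $T$. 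This is possible because the set of such $\lambda$ is stable under $P(\lambda)$-conjugation, which contains $G_{\calL}$, so one can conjugate to centralize $T$. Then $T$ and $\lambda$ generate a torus acting on $V_{\calL}$, and we take a simultaneous eigenbasis containing $H_1$. Polystability of $F$ gives $\lim\lambda(t)F\in G\cdot F$, which we then lift to $\calL_0\in G\cdot\calL$. The lift is the delicate point: the limit span of the initial forms is determined by $F$ only up to the ambiguity of decomposing $F$. To handle it, use that $\lambda$ then lies, up to conjugation, in $G_F$, and that for a simultaneous eigenbasis $\lambda$ acts on each $H_i$ by scalars, hence lies in $G_{\calL}$, giving $\calL_0=\calL$.

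For the direction ($\Rightarrow$), assume $\calL$ is polystable and let $H_2,\dots,H_{k+1}$ be $T$-invariant generators. Take $\lambda$ with $\mu(F,\lambda)=0$. We have $\mu(\calL,\lambda)\le\mu(F,\lambda)=0$, and semistability forces equality. Hence $\lambda$ is optimal for $\calL$ with weight $0$, and polystability of $\calL$ gives, after conjugation, $\lambda\subset G_{\calL}$, in fact inside a maximal torus, which we may take to be $T$ by conjugacy inside $G_{\calL}$. A $T$-conjugate of $\lambda$ then fixes each $T$-invariant $H_i$, and therefore fixes $F$, so $\lim\lambda(t)F\in G\cdot F$ and $F$ is polystable.

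The main obstacle is the conjugation step: ensuring the conjugating element preserves the chosen $T$-eigen decomposition, which needs the equality case $\mu(\calL,\lambda)=\mu(F,\lambda)$ to force $\lambda$-adaptedness of the $H_i$. I would prove this first by an explicit weight computation in a basis diagonalizing $T$ and $\lambda$ simultaneously.
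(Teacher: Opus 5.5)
Your proposal has a genuine gap in each direction, and it is missing the idea that closes both: a comparison of the \emph{ranks} of stabilizers. In $(\Leftarrow)$ your setup is the right one. Kempf gives $\lambda$ centralizing $T$ with $\mu(\calL,\lambda)=0$ and $\calL_0\notin\SL(n+1)\cdot\calL$. You can also choose $T$-invariant generators, including the given $H_1$, whose initial forms are independent, so $\mu(F,\lambda)=0$ for $F=H_1+\dots+H_{k+1}$. But the step ``$T$ and $\lambda$ generate a torus acting on $V_{\calL}$'', from which you conclude that $\lambda$ acts on each $H_i$ by scalars and so $\lambda\subset G_{\calL}$, is false. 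The subgroup $\lambda$ cannot preserve $V_{\calL}$, precisely because $\calL_0\neq\calL$. The $H_i$ can be $T$-eigenvectors and $\lambda$-adapted, but they cannot all be $\lambda$-eigenvectors. So the ``lift'' from $\lim\lambda(t)\cdot F\in\SL(n+1)\cdot F$ to $\calL_0\in\SL(n+1)\cdot\calL$ is never actually made.

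What works instead is the following. A torus fixing the divisor $F$ fixes each of its components, hence each $H_j$, hence $\calL$; so $T$ is also a maximal torus of $G_F$. Each limit $H_j^\infty$ is fixed by $\lambda$, and also by $T$, since $T$ commutes with $\lambda$. Hence $F^\infty=\lim\lambda(t)\cdot F$ is fixed by the torus generated by $T$ and $\lambda$. That torus has dimension $\dim T+1$, because $\lambda\not\subset G_{\calL}\supseteq T$. But $F^\infty=g\cdot F$ by polystability of $F$, so $G_{F^\infty}=gG_Fg^{-1}$ has maximal tori of dimension $\dim T$, a contradiction.

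In $(\Rightarrow)$ the step ``a conjugate $\lambda'=h^{-1}\lambda h\subset T$ fixes each $H_i$, hence $F$, so $\lim\lambda(t)\cdot F\in\SL(n+1)\cdot F$'' is a non sequitur. One has $\lim\lambda(t)\cdot F=h\cdot\lim\lambda'(t)\cdot(h^{-1}\cdot F)$, and $\lambda'$ fixing $F$ says nothing about $h^{-1}\cdot F$. The same problem affects the ``convenient equivalent form'' in your first paragraph: knowing that $\lambda$ is merely conjugate into $G_x$ does not control $\lim\lambda(t)\cdot x$.

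Here you should argue by contradiction. Assume $F$ is semistable but not polystable, and apply Kempf to $F$ with $T\subset G_F$. This gives $\lambda$ centralizing $T$ with $\lambda\not\subset G_F$ and $\mu(F,\lambda)=0$. Then $\mu(\calL,\lambda)=0$ by semistability of $\calL$. The equality $\mu(\calL,\lambda)=\mu(F,\lambda)$ means the $H_j^\infty$ generate $\calL^\infty$. So $\calL^\infty$ is fixed by the $(\dim T+1)$-dimensional torus generated by $T$ and $\lambda$. This is impossible, since $\calL^\infty\in\SL(n+1)\cdot\calL$ and $T$ is maximal in $G_{\calL}$.
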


\begin{proof}
Let $Z$ denote the centralizer of $T$ in $\SL(n+1)$. 
Assume that $\calL$ is GIT semistable but not polystable under the condition that $H_{1}+H_{2}+\ldots+H_{k+1}$ is GIT polystable for any $T$-invariant elements $H_{2},\ldots,H_{k+1}$ in $\calL$ such that $H_{1},H_{2},\ldots,H_{k+1}$ generate $\calL$. Then the orbit $\SL(n+1)\cdot \calL$ is not closed in $\X$.
Given a choice of $H\coloneqq H_{1}+H_{2}+\ldots+H_{k+1}$, as above, let $G_H$ be the corresponding stabilizer and note that $G_H$ also contains $T$ as a maximal subtorus. Indeed, for any one-parameter subgroup $\lambda$ that fixes $H$, we see that $\lambda$ also fixes each $H_{j}$ and hence $\mathcal{L}$.
Now, since $\calL$ is not GIT polystable, by \cite[Corollary 4.5]{kempf1978}, there exists a one parameter subgroup $\lambda$ of $Z$ such that $\calL^{\infty}:=\lim_{t\to0}\lambda(t)\cdot \calL$ is GIT semistable. Hence, $\mu(\calL,\lambda)=0$, but $\lambda$ is not contained in $G_{\calL}$.  By the proof of \cite[Lemma 3.2]{hz}, we can replace $H_{2},\ldots,H_{k+1}$ and may assume that $\mu(H,\lambda)=0$. Thus, setting $H^{\infty}_{j}:=\lim_{t\to0}\lambda(t)\cdot H_{j}$, we have that the $H^{\infty}_{i}$ generate $\calL^{\infty}$.  Moreover, $H^{\infty}\coloneqq H_{1}^\infty+\ldots+H_{k+1}^\infty=g\cdot H$ for some $g\in \SL(n+1)$ since $H$ is GIT polystable.
This means that $gG_Hg^{-1}$ is the stabilizer of $H^\infty$. Then the torus generated by $T$ and $\lambda$ fixes $H^{\infty}$. This contradicts the fact that $T\subset G_H$ is a maximal torus. 

Next, we deal with the converse statement. 
Assume that $\calL$ is GIT polystable and there exists a choice of $T$-invariant hypersurfaces $H_{2},\ldots,H_{k+1}$ in $\calL$ such that $H_{1},H_{2},\ldots,H_{k+1}$ generate $\calL$, and $H$ defined as above is GIT semistable but not polystable. Then, $T$ is a maximal subtorus of $G_H$ again, and by \cite[Corollary 4.5]{kempf1978}, we can take a one-parameter subgroup $\lambda$ of $Z$ that is not contained in $G_H$ with $\mu(H,\lambda)=0$ as in the first paragraph.
Since $\mu(H,\lambda)\ge \mu(\calL,\lambda)$ (cf.~\cite[Lemma 3.1]{hz}) and $\calL$ is GIT semistable, we obtain that $\mu(\calL,\lambda)=0$. Now, let $G_{\infty}$ be the stabilizer group of  $\calL^\infty:=\lim_{t\to 0}\lambda(t)\cdot \calL$. Then, since $\calL$ is GIT polystable, $G_{\calL}\cong G_\infty$. However, we see that each $H_j^\infty:=\lim_{t\to 0}\lambda(t)\cdot H_{j}$ is fixed by both $T$ and $\lambda$. Thus, $\calL^\infty$ is fixed by $T$ and $\lambda$. Then the dimension of a maximal torus of $G_{\infty}$ is strictly larger than that of $T$. This is a contradiction.
\end{proof}

In Section \ref{sec:new_results}, we investigate some concrete links among three specific instances of the described GIT problem for linear systems of hypersurfaces. More precisely, we will relate the (semi)stability of certain:

\begin{enumerate}[(A)]
    \item nets of quadric surfaces in $\P^3$, which corresponds to $(k,d,n)=(2,2,3)$ \label{pA};
    \item nets of cubic curves in $\P^2$, which corresponds to $(k,d,n)=(2,3,2)$ \label{pB}; and
    \item plane curves of degree four, which corresponds to $(k,d,n)=(0,4,2)$ \label{pC}.
\end{enumerate}

A description of (semi)stability of plane quartics can be found in \cite[Section 1.12]{mumford} or in the proof of \cite[Proposition 6.3]{ascher2024wall}. A plane quartic is GIT stable if and only if it has at worst $\mathbf{A}_2$ singularities.
It is GIT semistable if and only if it is a reduced curve with at most double points that is not a cubic curve with an inflectional tangent line, or it is a double smooth conic.

\subsection{The log canonical threshold and toric valuations}\label{sec:lct}

As already mentioned, our strategy to address problems \hyperref[pA]{(A)}, \hyperref[pB]{(B)}, and \hyperref[pC]{(C)} above utilizes some of the ideas introduced in \cite{hz} about log canonical thresholds and toric valuations, as well as their connection to GIT stability, as seen also in \cite{compint}. We summarize these and refer the reader to \cite[Section 2.4]{hz} and \cite{komo} for more details.

We first define log pairs.

\begin{defi}
A \textbf{log pair} $(X,\Delta)$ consists of a normal quasi-projective variety $X$ together with an effective $\Q$-divisor $\Delta$ on $X$ such that $K_X+\Delta$ is $\Q$--Cartier.
\end{defi}

Now, given a log pair $(X,\Delta)$ and a prime divisor $E$ over $X$, i.e., a prime divisor on $Y$ with a projective birational morphism $\mu:Y\to X$ from a normal variety, one can consider the valuation $\ord_E$ on $X$ that sends each rational function in $K(X)^{\times} = K(Y)^{\times}$ to its vanishing order along $E\subset Y$. More generally, we can then consider (real) divisorial valuations.

\begin{defi}\label{def:valuation}
   A valuation $v$ of $X$ is \textbf{divisorial} if $v=c\cdot \ord_E$ for some real number $c>0$ and for some prime divisor $E$ over $X$.  
\end{defi}

This allows us to introduce then the notion of \textit{log discrepancy}. Given a log pair $(X,\Delta)$ and a divisorial valuation $v=\ord_E$ on $X$, where $E$ is a prime divisor on some normal variety $Y$ with a projective birational morphism $\mu:Y\to X$, the corresponding log discrepancy is the number 
\begin{align*}
A_{X,\Delta}(E) \coloneqq  1+\ord_E\!\bigl(K_Y-\mu^*(K_X+\Delta)\bigr).
\end{align*}
When $\Delta=0$, we simply write $A_X(E)$. Observe that the following identity $A_{X,\Delta}(E)=A_X(E)-\ord_E(\Delta)$ holds when $K_X$ and $\Delta$ are $\mathbb{Q}$-Cartier.

In turn, log discrepancies allow us to introduce certain types of singularities of pairs arising from the minimal model program, which are closely related to the notions of GIT (semi)stability in our setup and will be needed in Sections \ref{sec:git_cubics} and \ref{sec:new_results}.

\begin{defi}\label{defi--log--pair}
\leavevmode
\begin{enumerate}[(i)]
    \item  A log pair $(X,\Delta)$ is {\it log canonical} (lc) if $A_{X,\Delta}(E)\geq 0$ for all prime divisors $E$ over $X$. Similarly, we say that $(X,\Delta)$ is {\it Kawamata log terminal} (klt) if  $A_{X,\Delta}(E)> 0$ for any prime divisor $E$ over $X$. Furthermore, we say that $X$ has only {\it canonical singularities} if $\Delta=0$ and $A_{X}(E)\ge1$ for any prime divisor $E$ over $X$. 
      \item Moreover, if $E$ is a prime divisor on some normal variety $Y$ with a projective birational morphism $\mu\colon Y\to X$ such that $A_{X,\Delta}(E)=0$ (resp.~$A_{X,\Delta}(E)<0$), then we say that $c_X(E)$, which is the generic point of $\mu(E)$, is an {\it lc center} (resp.~{\it non-lc center}) of the log pair $(X,\Delta)$.
\end{enumerate}
\end{defi}

Let now $(X_1,\Delta_1)$ and $(X_2,\Delta_2)$ be two log pairs.
For any projective generically finite morphism $\mu_1\colon Y\to X_1$, we can define a $\Q$-divisor $\Delta^1_Y$ on $Y$ as follows.
Let $f_1\colon Z_1\to X_1$ be the Stein factorization of $\mu_1$ and let $\Delta_{Z_1}$ be such that $K_{Z_1}+\Delta_{Z_1}=f_1^*(K_{X_1}+\Delta_1)$.
Then set $\Delta_{Y}^1$ as the unique $\mathbb{Q}$-divisor such that $(g_1)_*\Delta_Y^1=\Delta_{Z_1}$ and $K_{Y}+\Delta_Y^1=g_1^*(K_{Z_1}+\Delta_{Z_1})$, where $g_1\colon Y\to Z_1$ is the canonical morphism.
Similarly, we can define $\Delta_{Y}^2$ as above for any projective generically finite dominant morphism $\mu_2\colon Y\to X_2$.

\begin{defi}\label{def:logcrep}
    Two log pairs $(X_1,\Delta_1)$ and $(X_2,\Delta_2)$ are said to be {\it log crepant} if there exist projective generically finite dominant morphisms $\mu_1\colon Y\to X_1$ and $\mu_2\colon Y\to X_2$ as above such that $\Delta_Y^1=\Delta_Y^2$. 
\end{defi}

We further note that, by \cite[Proposition 5.20]{komo}, if two log pairs $(X_1,\Delta_1)$ and $(X_2,\Delta_2)$ are log crepant, then $(X_1,\Delta_1)$ is lc (resp.~klt) if and only if so is $(X_2,\Delta_2)$.

In Sections \ref{sec:git_cubics} and \ref{sec:new_results}, we will also need the following numerical invariant.

\begin{defi}
    Assume now that $X$ has only lc singularities, that is, $(X,0)$ is lc, and let $D$ be an effective $\Q$-Cartier $\Q$-divisor on $X$. The \textit{log canonical threshold} (lct) of $(X,D)$ is the rational number
    $$\mathrm{lct}(X,D) = \sup\{\lambda \in \mathbb{Q}\,\mid\,(X,\lambda  D)\text{ is log canonical}\}.$$
\end{defi}

Alternatively, the log canonical threshold admits the following description in terms of divisorial valuations: 
\[
\mathrm{lct}(X,D)=\inf_E\frac{A_{X}(E)}{\mathrm{ord}_E(D)},
\]
where $E$ runs over all prime divisors over $X$ such that $\mathrm{ord}_E(D)\ne0$. In fact, the infimum is achieved by some prime divisor $E$ over $X$ by \cite[Corollary 2.31]{komo}, and we say that $E$ {\it computes} $\mathrm{lct}(X, D)$. Moreover, when $X$ and $Y$ admit a torus action and a projective birational morphism $\mu\colon Y\to X$ is equivariant with respect to the torus, torus-invariant prime divisors $E\subset Y$ play a special role, which motivates the following definition.

\begin{defi}
\label{DefToricDiv}
    Consider $X=\P^n$ as a toric variety with respect to some maximal torus $T\subset \SL(n+1)$ and let $\pi\colon Y\to X$ be a $T$-equivariant proper birational morphism from a normal variety. A prime divisor $E\subset Y$ is \textit{toric} if it is $T$-invariant. 
    Furthermore, if $v$ is a divisorial valuation such that $v=c\cdot \mathrm{ord}_E$ for some $c>0$ and some toric prime divisor $E$ over $X$, then we say that $v$ is a {\it toric valuation}.
\end{defi}

Note that in Definition \ref{DefToricDiv} when $n=2$, if $D_1$ and $D_2$ are toric prime divisors on $Y$  with respect to the same torus $T$ intersecting at a closed point $p$, then $p$ is $T$-invariant and hence the blowing up $\mu\colon Z\to Y$ at $p$ is also $T$-equivariant. In particular, the exceptional divisor $E$ of $\mu$ is toric. In addition, one can prove the following lemma.

\begin{lema}\label{lema--non-toric}
    Let $C\subset \mathbb{P}^2$ be a cubic curve and let $C_0\subset C$ be the largest effective divisor whose irreducible components are not toric. Let $p\in C$ be a closed point and $\pi_1\colon X_1\to\mathbb{P}^2$ be the blowing up at $p$. If $q$ is a closed point of the exceptional curve $\mathrm{Exc}(\pi_1)$  and $\pi_2\colon X_2\to\mathbb{P}^2$  is the blowing up at $q$, then $E=\mathrm{Exc}(\pi_2)$ is a toric divisor over $\P^2$ intersecting $(\pi_2\circ\pi_1)_*^{-1}C_0$ transversely or not intersecting $(\pi_2\circ\pi_1)_*^{-1}C_0$.
\end{lema}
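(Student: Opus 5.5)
The idea is to choose the torus $T$ adapted to the pair $(p,q)$, so that both blow-ups are $T$-equivariant, and then to bound multiplicities by Bézout. Note that $C_0$ depends on the choice of torus: the statement should be read as saying that, for a suitably chosen maximal torus $T$, $E$ is toric and the non-toric part of $C$ meets $E$ well.

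\textbf{Step 1: choice of torus.} The point $q\in\mathrm{Exc}(\pi_1)$ is a tangent direction at $p$. Let $L_1$ be the unique line through $p$ with direction $q$. Let $L_2$ be any other line through $p$, and $L_3$ a line not through $p$. Choose homogeneous coordinates in which $L_1,L_2,L_3$ are the coordinate lines, and let $T$ be the corresponding diagonal torus. Then:
\begin{itemize}
\item $p=L_1\cap L_2$ is $T$-fixed, so $\pi_1$ is $T$-equivariant and $\mathrm{Exc}(\pi_1)$ is toric.
\item The strict transform of $L_1$ is toric and meets $\mathrm{Exc}(\pi_1)$ exactly at $q$, so $q$ is an intersection of two toric divisors.
\item By the remark preceding the lemma, the blow-up at $q$ is $T$-equivariant, so $E$ is toric.
\end{itemize}
In particular $L_1$ is toric, so $L_1\not\subset C_0$ even if $L_1$ is a component of $C$.

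\textbf{Step 2: the multiplicity bound.} Write $C_0'$ for the strict transform of $C_0$ on $X_1$. Put $m_p=\mathrm{mult}_p C_0$ and $m_q=\mathrm{mult}_q C_0'$, and recall $m_q\le m_p$. Let $L_1'$ be the strict transform of $L_1$. Since $L_1\not\subset C_0$, Bézout gives
\[
\deg C_0 = L_1\cdot C_0 \ \ge\ m_p + (L_1'\cdot C_0')_q \ \ge\ m_p+m_q .
\]
Here the local intersection at $p$ decomposes as $m_p$ plus the intersection of the strict transforms over $\mathrm{Exc}(\pi_1)$, which is at least $1\cdot m_q$ at $q$, because $L_1'$ is smooth at $q$. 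As $\deg C_0\le 3$, we get $m_p+m_q\le 3$, and combined with $m_q\le m_p$ this forces $m_q\le 1$.

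\textbf{Step 3: conclusion.} The intersection number of the strict transform of $C_0'$ with $E$ equals $m_q$.
\begin{itemize}
\item If $m_q=0$, then $q\notin C_0'$ and the strict transform of $C_0$ does not meet $E$.
\item If $m_q=1$, then $C_0'$ is smooth at $q$, and its strict transform meets $E$ in a single point with intersection number $1$, i.e.\ transversely.
\end{itemize}

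\textbf{Expected obstacle.} The only delicate point is Step 1 together with the interpretation of the statement. One must ensure that the line $L_1$ in direction $q$ is declared toric, so that it is excluded from $C_0$. Otherwise a line component of $C$ through $p$ in direction $q$ would break the Bézout bound. Once the torus is chosen this way, the rest is the routine multiplicity count above.
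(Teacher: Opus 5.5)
Your proof is correct, but it takes a different route from the paper. The paper only asserts that the lemma follows from a ``careful but routine analysis of all possibilities'' for the cubic $C$ and the pair $(p,q)$.

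You avoid that case analysis with a single inequality along the toric line $L_1$ in direction $q$: on $X_1$ one has $L_1'\cdot C_0'=\deg C_0-m_p\ge m_q$. Combined with $m_q\le m_p$ and $\deg C_0\le 3$, this forces $m_q\le 1$, so the strict transform of $C_0$ on $X_2$ has intersection number at most one with $E$.

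This buys several things the case analysis leaves implicit:
\begin{itemize}
\item It gives a slightly stronger conclusion: the strict transform of $C_0$ meets $E$ in at most one point, with multiplicity one. This is exactly the per-cubic bound used in the proof of Theorem~\ref{thm--cubics}.
\item It shows where degree three enters. For a curve of degree $d$ the same argument only gives $m_q\le\lfloor d/2\rfloor$.
\item It handles non-reduced $C_0$ and line components through $p$ uniformly.
\end{itemize}

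Your ``expected obstacle'' is not actually a choice. If $E$ is toric for some maximal torus $T$, then $p$ is $T$-fixed. Moreover, $q$ is a $T$-fixed point of $\mathrm{Exc}(\pi_1)\cong\mathbb{P}(T_p\mathbb{P}^2)$, so it is the direction of one of the two $T$-invariant lines through $p$. Hence $L_1$ is automatically toric, and your argument works for every torus making $E$ toric, not just a suitably chosen one. This matters because in the proof of Theorem~\ref{thm--cubics} the torus is dictated by $E_{j-1}$ rather than chosen freely.
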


\begin{proof}
    It follows from a careful but routine analysis of all possibilities for the plane cubic $C$ and the points $p$ and $q$, with $q$ infinitely near to $p$ as stated.
\end{proof}

The usefulness of Definition \ref{DefToricDiv} and the previous lemma to our purposes lies in the following criterion, which will be used in Section \ref{sec:git_cubics}.

\begin{lema}[{\cite[Corollary 2.8.1]{hz}}]\label{hypersurftoric}
A hypersurface  $H_f:=\{f=0\}\subset\mathbb{P}^n$  of degree $d$   is GIT unstable (resp. not stable) if and only if there exists a toric prime divisor $E$ such that \[
\frac{A_{\mathbb{P}^n}(E)}{\mathrm{ord}_E(H_f)}<\frac{n+1}{d} \qquad (\text{resp.}\, \leq).
\]
In particular, if a hypersurface  $H_f:=\{f=0\}\subset\mathbb{P}^n$  of degree $d$ is GIT unstable (resp. not stable)  for any $d\ge n+1$, then $\lct(X,H_f)<\tfrac{n+1}{d}$ (resp.~$\le$).
\end{lema}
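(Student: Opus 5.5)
The plan is to make the Hilbert--Mumford criterion explicit for diagonal one-parameter subgroups and then match each such subgroup with a monomial (toric) valuation on an affine chart of $\P^n$.

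\emph{Step 1: the numerical criterion.} Every one-parameter subgroup of $\SL(n+1)$ factors through some maximal torus, and all maximal tori are conjugate. So after a linear change of coordinates, which moves the torus $T$ in Definition \ref{DefToricDiv} to the diagonal one, we may take $\lambda=\mathrm{Diag}(\alpha_0,\ldots,\alpha_n)$ with $\alpha_i\in\mathbb{Z}$ and $\sum\alpha_i=0$. Write $f=\sum_m c_m x^m$, where $m$ runs over exponent vectors with $|m|=d$, and let $\mathrm{supp}(f)$ be the set of $m$ with $c_m\neq 0$. The weight of $\lambda$ on the limit point is governed by $\min_{m\in\mathrm{supp}(f)}\langle\alpha,m\rangle$. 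With the sign convention of Section \ref{sec:git} (possibly after replacing $\lambda$ by $\lambda^{-1}$), the Hilbert--Mumford criterion gives the following. The hypersurface $H_f$ is unstable (resp.~not stable) if and only if, in suitable coordinates, there is a nonzero $\alpha$ with $\sum\alpha_i=0$ and
\[
\min_{m\in\mathrm{supp}(f)}\langle\alpha,m\rangle>0\qquad(\text{resp.}\ \ge 0).
\]

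\emph{Step 2: the dictionary with toric valuations.} Given such an $\alpha$, set $c=-\min_i\alpha_i\ge 0$ and $w=\alpha+c(1,\ldots,1)$. Then $w_i\ge0$, and after relabelling we may assume $w_0=0$. On the chart $\{x_0\neq0\}\cong\mathbb{A}^n$, the weights $(w_1,\ldots,w_n)$ on the coordinates $x_i/x_0$ define a monomial valuation $v_w$. This is a toric valuation: it is a positive multiple of $\ord_E$ for the exceptional divisor $E$ of a weighted blowup, since we may rescale $\alpha$ to be integral. The standard toric computations give
\[
A_{\P^n}(v_w)=\sum_{i\ge1}w_i=(n+1)c,\qquad v_w(f)=\min_m\langle w,m\rangle=\min_m\langle\alpha,m\rangle+cd.
\]
Hence
\[
\frac{A(v_w)}{v_w(f)}<\frac{n+1}{d}\iff (n+1)cd<(n+1)\Bigl(\min_m\langle\alpha,m\rangle+cd\Bigr)\iff\min_m\langle\alpha,m\rangle>0,
\]
and the same chain holds with $\le$ in place of $<$ and $\ge 0$ in place of $>0$. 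When $v_w(f)=0$ the ratio is infinite, which is consistent with $\min_m\langle\alpha,m\rangle=-cd\le 0$.

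For the converse direction, a toric prime divisor $E$ over $\P^n$ corresponds to a primitive vector $u$ in the cocharacter lattice $N\cong\mathbb{Z}^{n+1}/\mathbb{Z}(1,\ldots,1)$. Choose the representative of $u$ whose minimal coordinate is $0$, and take the chart given by that coordinate. On this chart $\ord_E$ is the monomial valuation with weights $w\ge 0$. Subtracting the mean, $\alpha=w-\tfrac{1}{n+1}\bigl(\sum_i w_i\bigr)(1,\ldots,1)$, gives a one-parameter subgroup after scaling to integers, and the same computation runs backwards. Scaling does not affect either the ratio $A/\ord$ or the sign conditions, so rational weights cause no problem. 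Together, Steps 1 and 2 give the claimed equivalence.

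\emph{Step 3: the consequence about $\lct$.} This is immediate from $\lct(\P^n,H_f)=\inf_E A_{\P^n}(E)/\ord_E(H_f)$, since the toric $E$ produced above bounds the infimum from above. The hypothesis $d\ge n+1$ only ensures that $\tfrac{n+1}{d}\le1$, so that the inequality is meaningful as a statement about log canonicity.

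The main obstacle is bookkeeping rather than anything conceptual. One has to fix the sign convention of $\mu$ consistently, and one has to check that every toric divisor, not only the exceptional divisors of weighted blowups at torus-fixed points, is captured by a monomial valuation on a single chart with nonnegative weights. I would handle the second point by the fan description of toric valuations: a valuation $v_u$ is centred on the affine chart of any maximal cone containing $u$, and on that chart it is monomial with nonnegative weights.
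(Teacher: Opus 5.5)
Your proposal is correct and follows essentially the route behind the cited \cite[Corollary 2.8.1]{hz}, which the paper quotes without proof. The shift $w=\alpha+c(1,\ldots,1)$ identifies the Hilbert--Mumford weight of $\mathrm{Diag}(\alpha)$, up to sign convention, with $\tfrac{1}{n+1}\bigl(d\,A_{\P^n}(v_w)-(n+1)\,v_w(f)\bigr)$, and toric divisors over $\P^n$ are exactly the monomial valuations with nonnegative weights on a coordinate chart.

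Two minor corrections. First, $c=-\min_i\alpha_i$ is strictly positive, not just nonnegative, since $\alpha\neq 0$ and $\sum_i\alpha_i=0$; this is what makes the case $v_w(f)=0$ harmless in the non-strict version. Second, the hypothesis $d\ge n+1$ is not needed for the final $\lct$ assertion.
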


For completeness, we also recall the notion of ADE singularities for plane curves, which play a central role in Section \ref{sec:construction_extended}. If $X$ is a smooth surface and $\Delta$ is a reduced curve on $X$, we say that $\Delta$ has only {\it ADE singularities} if either $\Delta$ is smooth, or at a singular point $\Delta$ is analytically locally described by one of the following equations.
    \begin{itemize}
        \item[$\mathbf{A}_n$:] $x^2+y^{n+1}, n\geq 1$
        \item[$\mathbf{D}_n$:]  $x^2y+y^{n-1}, n\geq 4$
        \item[$\mathbf{E}_6$:] $x^3+y^4$
        \item[$\mathbf{E}_7$:] $x^3+xy^3$
        \item[$\mathbf{E}_8$:] $x^3+y^5$
    \end{itemize}
 In particular, if $\Delta$ is two-divisible in $\mathrm{Pic}(X)$ and is reduced, then the normal surface obtained by taking a double cover of $X$ branched along $\Delta$ has only canonical singularities if and only if $\Delta$ has only ADE singularities.

\subsection{A cycle of correspondences and Gale duality}\label{sec:cycle}

Recall now that the Gale transform associates to a configuration of sufficiently general $s$ points in $\P^{r}$ a configuration of $s$ points in $\P^{s-r-2}$. Following the exposition in \cite{pointsets}, it can be described as follows. For a more modern exposition of many different aspects of the geometry of the Gale transform, we refer the reader to \cite{eisenbud_gale}.  

\begin{defi}
Let $\calP\in (\P^{r})^s$ be such that every sub configuration of $s-1$ points spans $\P^r$. A point $\mathcal{Q}\in (\P^{s-r-2})^{s}$ is called a Gale image of the point $\calP$ if there exists $D\in \mathrm{GL}(n)$ diagonal such that $A_{\calP}\cdot D\cdot (A_{\mathcal{Q}})^{t}=\mathbf{0}$, where $A_{\calP}\in \mathbb{M}_{(r+1) \times (s)}$ (resp. $A_{\mathcal{Q}}$) is a matrix whose columns consist of the projective coordinates of the $s$ points in $S_{\calP}$ (resp. $S_{\mathcal{Q}}$) and $\mathbf{0}$ denotes the $(r+1)\times(s-r-1)$ zero matrix.
\end{defi}

When $(s,r)=(7,3)$, the Gale transform
gives rise to a cycle of three beautiful correspondences among certain nets of quadrics in $\P^3$, nets of cubics in $\P^2$, and plane quartic curves, together with some extra data. This cycle is explained in detail,  for instance, in \cite[Chapter 6, Section 6.3.3]{cag},  \cite[Sections 6 and 7]{GrossHarris}, and \cite[Section 3]{veronese}. In a self-contained manner, it can be described as follows. 

First, observe that given $\N$ a net of quadric surfaces in $\P^3$, then the following lemma holds. A proof can be found in the references mentioned above.

\begin{lema}\label{lem:typical}
   The following statements about $\N$ are equivalent:
    \begin{enumerate}[(i)]
        \item the discriminant curve is smooth;
        \item  the base locus consists of eight distinct points, with no four coplanar;
        \item there are no quadrics of rank two or less in $\N$, and at a base point of the net, no quadric (in the net) is singular.
    \end{enumerate}
\end{lema}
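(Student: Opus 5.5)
Two implications, (iii)$\Rightarrow$(ii) and (ii)$\Rightarrow$(i), carry the weight, while (i)$\Rightarrow$(iii) is the classical direction. Throughout, set $\N=\langle Q_0,Q_1,Q_2\rangle$ with symmetric matrices $A_0,A_1,A_2$. The discriminant is $\Delta(\N)=\{\det(x_0A_0+x_1A_1+x_2A_2)=0\}\subset|\N|\cong\P^2$, and the base locus is $B=Q_0\cap Q_1\cap Q_2$.

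\emph{(iii)$\Rightarrow$(ii).} The Jacobian of $\det$ at $A$ is the adjugate $\operatorname{adj}(A)$. For a quadric $Q=Q_A$ of rank exactly $3$ with vertex $v$, we have $\operatorname{adj}(A)=c\,vv^t$ with $c\neq0$. The differential of $\det$ restricted to the net at $A$ is therefore $B\mapsto c\,v^tBv$, which vanishes identically on $V_\N$ exactly when $v$ lies on every quadric of $\N$, i.e.\ $v\in B$. Hence $\Delta(\N)$ is singular at $[Q]$ iff $\rank Q\le2$ or $\rank Q=3$ with vertex a base point.
\begin{itemize}
\item If some $Q$ has rank $\le 2$, then $\operatorname{adj}(A)=0$ and $\det$ vanishes to order at least $2$ along every line through $A$, so $\Delta(\N)$ is singular at $[Q]$. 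This contradicts smoothness.
\item If a rank-$3$ quadric has its vertex $v$ at a base point, the same computation gives a singular point. So no quadric of the net is singular at a base point.
\item If $\Delta(\N)$ vanished identically, every quadric would be singular, and by Bertini the general one would be singular at a base point. The previous case already excludes this.
\end{itemize}

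\emph{(ii)$\Rightarrow$(i).} Suppose first that $B$ is finite with no quadric singular at a base point. Then for any two members, $Q_i\cap Q_j$ is smooth at each $p\in B$: otherwise $T_pQ_i=T_pQ_j$ would force some member of the pencil $\langle Q_i,Q_j\rangle$ to be singular at $p$. So $B$ is a transverse complete intersection of three quadrics, and Bézout gives eight reduced points. It remains to rule out four coplanar base points, say $p_1,\dots,p_4\in H$.
\begin{itemize}
\item The restriction $\N|_H$ is a net of conics through four points. Conics through four points in a plane form at most a pencil (if no three are collinear), so some $Q\in\N$ has $Q|_H\equiv0$, i.e.\ $Q=H\cup H'$ has rank $\le2$.
\item If three of the points are collinear on a line $L$, then every quadric of the net contains $L$. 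This contradicts finiteness of $B$.
\end{itemize}
So (iii) gives eight distinct points, no four coplanar.

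\emph{(i)$\Rightarrow$(iii).} Given eight distinct points with no four coplanar, I would show that $\N$ contains no quadric of rank $\le 2$ and none singular at a base point.
\begin{itemize}
\item A rank $\le2$ quadric is a union of two planes containing all eight points, so one plane contains at least four of them, which is excluded.
\item If $Q\in\N$ were singular at $p_1$, then for $Q'$ another member the intersection $Q\cap Q'$ would be singular at $p_1$. The local intersection multiplicity of $Q\cap Q'\cap Q''$ at $p_1$ would then be at least $2$, contradicting that the eight points are distinct and reduced by Bézout.
\end{itemize}

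\emph{(iii)$\Rightarrow$(i), the main obstacle.} By the discussion in (iii)$\Rightarrow$(ii), $\Delta(\N)$ is not identically zero. The adjugate computation then shows that any singular point of $\Delta(\N)$ comes from a quadric of rank $\le2$ or from a rank-$3$ quadric with vertex in $B$. Both are excluded by (iii), so $\Delta(\N)$ is smooth. The delicate point is guaranteeing $\Delta(\N)\not\equiv0$ under (iii) alone. For this I would apply Bertini's theorem on singular points: the general member of $\N$ is smooth away from $B$. Combined with the hypothesis that no member is singular at a base point, this shows the general member is smooth, hence $\det\not\equiv0$.
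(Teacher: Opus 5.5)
The paper does not prove this lemma; it refers to Dolgachev and Gross--Harris. Their key input is the fact you rederive with the adjugate: the tangent line to $\Delta(\N)$ at a rank-$3$ member is the pencil of members through its vertex, and members of rank $\le 2$ give singular points. So your route is the standard one and the mathematics is sound. Two repairs are needed.

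First, your implication labels are scrambled.
\begin{itemize}
\item The block headed (iii)$\Rightarrow$(ii) assumes $\Delta(\N)$ smooth and derives (iii), so it proves (i)$\Rightarrow$(iii).
\item The block headed (ii)$\Rightarrow$(i) proves (iii)$\Rightarrow$(ii); you even end it with ``So (iii) gives eight distinct points''.
\item The block headed (i)$\Rightarrow$(iii) starts from eight points with no four coplanar, so it proves (ii)$\Rightarrow$(iii).
\item Only the last label, (iii)$\Rightarrow$(i), is correct.
\end{itemize}
The four implications you actually establish, (i)$\Leftrightarrow$(iii) and (ii)$\Leftrightarrow$(iii), do give the full equivalence. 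Still, relabel the blocks and rewrite the opening sentence; as written, a reader looking for a direct (ii)$\Rightarrow$(i) will not find one.

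Second, the (iii)$\Rightarrow$(ii) block opens with ``Suppose first that $B$ is finite'' and never returns to the infinite case. Finiteness is then used both for B\'ezout and to dismiss the collinear sub-case, so it must be justified. It follows from your own transversality argument:
\begin{itemize}
\item For $p\in B$, the map $V_{\N}\to T_p^{*}\P^3$, $q\mapsto dq_p$, has kernel exactly the members singular at $p$.
\item Under (iii) this kernel is zero, so the map is an isomorphism and the three generators have linearly independent differentials at $p$.
\item By Nakayama they generate the maximal ideal of $\P^3$ at $p$, so $p$ is a reduced isolated point of $B$.
\end{itemize}
Equivalently, if $B$ contained a curve $C$, then at a smooth point $p$ of $C_{\mathrm{red}}$ all differentials of members would lie in the two-dimensional annihilator of $T_pC$, forcing some member to be singular at $p$.

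Hence $B$ is a finite reduced complete intersection of three quadrics, B\'ezout gives exactly eight points, and your collinear and coplanar sub-cases go through as written.
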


As in \cite{GrossHarris}, let us call a net $\N\in \mathcal{X}_{2,2,3}$  \textit{typical} if it satisfies one of the conditions in Lemma \ref{lem:typical}. Then a typical net is determined by any seven of its base points. The projection from the eighth base point determines a point in $(\P^{2})^7$ as explained in \cite[p.~138]{GrossHarris}, which is the base locus of a net of plane cubics defining a (smooth) del Pezzo surface of degree two, hence a double cover of $\P^2$ branched along a quartic, which turns out to be isomorphic to the discriminant curve of the net of quadrics (cf., Proposition \ref{extended}). In other words, associated with this framework, there are three correspondences:

\begin{enumerate}
    \item[(C1)] \label{c1} typical nets of quadrics in $\P^3$ with a marked base point correspond to nets of plane cubics with a zero-dimensional base locus consisting of seven points in general position,
    \item[(C2)] \label{c2} the latter correspond to smooth plane quartics (together with a so-called Aronhold set) via the anti-canonical model of the associated del Pezzo surface, and
    \item[(C3)] \label{c3} smooth plane quartics marked with an Aronhold set correspond to typical nets since such a marking is equivalent to giving a representation of the curve as the determinant of a $4\times4$ symmetric matrix whose entries are linear forms in four variables. 
\end{enumerate}

\section{From nets of quadrics to nets of cubics and back} \label{sec:construction_extended}

\subsection{Generalized Gale transformation and good nets}
Let us now explain how one can extend the three correspondences \hyperref[c1]{(C1)}, \hyperref[c2]{(C2)}, and \hyperref[c3]{(C3)} from Section \ref{sec:cycle}, allowing for more degenerate unordered configurations of seven points in $\P^3$ (and in $\P^2$) that possibly contain infinitely near points. To this end, we first introduce the following definition.

\begin{defi}\label{def: regular}
A configuration $\calP\in \P^{3,8}$ (resp. $\calQ \in \P^{2,7}$) is called {\it regular} if condition (i) (resp. (ii)) below is satisfied.
\begin{enumerate}[(i)]
    \item  The linear system of quadrics $\N \coloneqq \calL_2(\calP)$ as in \hyperref[n2]{(N2)} is two-dimensional, with a zero-dimensional base locus (consisting of the eight points in $S_{\calP}$, see Proposition \ref{prop--ellip-fibration--criterion}) and one of the following equivalent conditions \cite[Assumption 1]{extremal} holds:
    \begin{enumerate}
        \item there exist two distinct members $Q_1$ and $Q_2$ of $\mathcal{N}$ such that $Q_1\cap Q_2$ is smooth at any base point of $\mathcal{N}$; 
        \item for any base point $p\in \P^3$ of $\mathcal{N}$ not infinitely near, there is at most one quadric $Q\in|\mathcal{N}|$ singular at $p$.
    \end{enumerate}
     \item  $S_{\calQ}$ is obtained from some regular point $\calP \in \P^{3,8}$ projecting seven points in $S_{\calP}$ from an eighth base point $p$ of $\calL_2(\calP)$ that is a base point in $\P^3$ not infinitely near. 
\end{enumerate}
Moreover, if (ii) holds, we abuse the terminology, and we further say that $\calQ$ is the {\it Gale image} of $\calP$ and write $\calQ=G(\calP,p)$ to emphasize that $\calQ$ strongly depends on $\calP$ and on the choice of the eighth $\P^3$-base point $p$.
  \end{defi}

In Definition \ref{def: regular} (ii) above, we mean that we consider the extended morphism on the blowing up of $\P^3$ at the chosen eighth base point, which is a $\P^3$-base point.

With Definition \ref{def: regular} in mind, we can now prove the following.

\begin{prop}\label{qtoc}
    Let $\N\in \mathcal{X}_{2,2,3}$ be a net of quadrics in $\P^3$ satisfying that there exists $\calP \in \P^{3,8}$ regular such that $\N=\calL_2(\calP)$. Choose $p$ as a $\P^3$-base point of $\N$ and write $\calQ=G(\calP,p)$ as in Definition \ref{def: regular}. Then the linear system of plane cubics $G(\N,p)\coloneqq \calL_3(\calQ)$ is two-dimensional. 
    \end{prop}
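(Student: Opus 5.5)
Our plan is to realize $G(\N,p)$ geometrically through the projection from $p$. Let $\sigma\colon \widetilde{X}=\mathrm{Bl}_p(\P^3)\to\P^3$ be the blowing up at $p$ with exceptional divisor $E\cong\P^2$, and let $\pi\colon\widetilde{X}\to\P^2$ be the $\P^1$-bundle induced by the projection from $p$. By Definition \ref{def: regular}(i)(b), at most one quadric of $\N$ is singular at $p$. Hence the strict transforms $Q'=\sigma^*Q-E$ of the members of $\N$ form a net $\N'\subset|\sigma^*\mathcal{O}(2)-E|$. The base locus of $\N'$ is the remaining length-seven constellation, which is the preimage of $S_{\calQ}$ under the identification used in Definition \ref{def: regular}(ii).

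The first step is the classical construction. Take two generators $Q_i',Q_j'$ of $\N'$. Their intersection is a curve of degree $4$ in $\P^3$ through $p$ (a complete intersection of two quadrics), minus the contribution at $p$. Its image under $\pi$ is a plane curve $C_{ij}$. A line through $p$ meets $Q_i\cap Q_j$ in a further point exactly when the restrictions of $q_i$ and $q_j$ to the line (which vanish at $p$) share a second root. Writing $q_k=x_0\ell_k(x)+c_k(x)$ with $p=[1:0:0:0]$ and $\ell_k$, $c_k$ forms in $x_1,x_2,x_3$ of degrees $1$ and $2$, this condition is the resultant $\ell_i c_j-\ell_j c_i=0$, which is a cubic in $x_1,x_2,x_3$. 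This gives a linear map
\[
\Lambda^2 V_\N\to \H^0(\P^2,\mathcal{O}(3)),\qquad q_i\wedge q_j\mapsto \ell_ic_j-\ell_jc_i,
\]
and its image is a linear system of dimension at most $2$ (projectively), since $\Lambda^2V_\N$ has dimension $3$. Each cubic in the image passes through the seven projected points with the prescribed infinitely near structure, because the corresponding curve $Q_i\cap Q_j$ contains $S_\calP$. The image is therefore contained in $\calL_3(\calQ)$.

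The second step is to show that the map is injective, so that $\calL_3(\calQ)$ is at least a net. Suppose a nonzero combination vanishes. Any element of $\Lambda^2$ of a three-dimensional space is decomposable, so we may assume $\ell_1c_2=\ell_2c_1$ for two independent members. If $\ell_1,\ell_2$ are independent, then $\ell_1\mid c_1$, so $c_1=\ell_1 m$ and $c_2=\ell_2 m$, and $Q_1,Q_2$ share the plane $m=0$ or have the common factor $(x_0+m)$. Then $Q_1\cap Q_2$ contains a curve or surface, which contradicts the zero-dimensional base locus in Definition \ref{def: regular}(i). If $\ell_1$ and $\ell_2$ are dependent, some member has $\ell=0$ and is singular at $p$; a second such member would violate (i)(b), and the remaining case again forces a common component. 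We expect this injectivity argument to be short.

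The third step, which we expect to be the main obstacle, is the upper bound $\dim\calL_3(\calQ)\le 2$, i.e.\ that the seven points of $S_\calQ$ impose independent conditions on cubics. For this we plan to use the elliptic fibration $\mathrm{Bl}_{S_\calP}\P^3\to\P^2$ from Proposition \ref{prop:main}(iii). It shows that the blow-up $Y$ of $\P^2$ along $S_\calQ$ has $-K_Y$ nef and big, i.e.\ $Y$ is a weak del Pezzo surface of degree $2$. To see this, note that a configuration imposing dependent conditions would force at least four points on a line, or all seven on a conic, counting infinitely near points. Lifting back through $\pi$, such a configuration gives a plane through $p$ containing $5$ base points, or a quadric cone with vertex $p$ containing all eight, which is incompatible with $\N$ having exactly eight isolated base points and condition (i)(a). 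Given that $-K_Y$ is nef and big, Riemann--Roch together with Kawamata--Viehweg vanishing gives $h^0(Y,-K_Y)=K_Y^2+1=3$. This yields $\dim\calL_3(\calQ)=2$, and combining it with the second step shows that the image of the map above is all of $\calL_3(\calQ)$.
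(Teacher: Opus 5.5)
\textbf{Verdict: Steps 1 and 2 are correct; Step 3 (the upper bound $\dim\calL_3(\calQ)\le 2$) has a genuine gap.}

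\textbf{Steps 1 and 2.} These give the lower bound by a more algebraic route than the paper's. The map $q_i\wedge q_j\mapsto \ell_ic_j-\ell_jc_i$ is well defined on $\Lambda^2V_\N$, and your factorization argument shows it is injective. In the dependent case, normalizing to $\ell_2=0\neq\ell_1$ even forces $q_2=0$ outright. The paper instead takes a general smooth $Q_1$ and obtains the three cubics as images of $C_{ij}=Q_i'\cap Q_j'$. It uses that $Q_1'\cong\mathrm{Bl}_p(\P^1\times\P^1)$ maps birationally to $\P^2$, contracting only the two lines through $p$.

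\textbf{The gap in Step 3.} The claim that the elliptic fibration ``shows'' $-K_Y$ is nef and big is given no mechanism, and the fibration is never used afterwards. The configuration argument that replaces it conflates two criteria. Four collinear points or seven points on a conic are the obstructions to nefness of $-K_Y$. Seven (curvilinear) points, by contrast, fail to impose independent conditions on cubics only when a length-five subscheme lies on a line. Your sentence is true only because of this second fact, which you neither prove nor cite.

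More seriously, the conic case is dismissed for a reason that does not hold. A quadric cone with vertex $p$ containing all eight base points is just a member of $\N$ singular at $p$. Condition (i)(b) explicitly allows one such member, and it occurs in good nets. For example, take $p=(1:0:0:0)$, $q_1=x_0x_1+c_1$, $q_2=x_0x_2+c_2$, $q_3=x_1^2+x_2^2+x_3^2$ with $c_1,c_2$ general. Neither the finiteness of the base locus nor (i)(a) excludes this. What is actually impossible is that $\pi^{-1}(D)$ contains all seven residual points. In the example only six projected points lie on $D$; the seventh is the direction $\{x_1=x_2=0\}$ on $E$, which is not on $D$. Proving this in general needs an argument you have not supplied.

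\textbf{How to close it.} Your own construction already gives the upper bound once you control its base locus, which is how the paper proceeds. For $Q_1$ general and smooth, the cubics $R_{12}=\ell_1c_2-\ell_2c_1$ and $R_{13}$ meet in a scheme of length $9$. It consists of $\calQ$ together with the two points $\{\ell_1=c_1=0\}$, which are the directions of the two lines of $Q_1$ through $p$ (the paper's $q(L_1),q(L_2)$). The cubic $R_{23}$ does not vanish at these two points. Otherwise one of these lines would contain a further, possibly infinitely near, base point, and a general $Q_1$ avoids this.

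Hence the strict transforms of your three cubics form a base-point-free net in $|-K_Y|$. So $-K_Y$ is nef with $K_Y^2=2$, and $Y$ is a weak del Pezzo surface of degree two. Your Riemann--Roch plus Kawamata--Viehweg computation then gives $h^0(Y,-K_Y)=3$.

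\textbf{Alternative fix.} Cite the criterion that a curvilinear scheme of length $7$ imposes dependent conditions on cubics only if a subscheme of length $5$ lies on a line. Then only your plane case is needed: restricting to that plane, at least a pencil of conics would contain a scheme of length at least $5$, forcing a common component that lies in the base locus. The problematic conic case disappears.
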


   \begin{proof}
Choose $\calP \in \P^{3,8}$ as in the statement and let $p\in \P^3$ be one of the base points of $\N=\calL_2(\calP)$. Take a general member $Q_1\in|\mathcal{N}|$ and choose $Q_2,Q_3$ such that $Q_1,Q_2$ and $Q_3$ generate $\mathcal{N}$.
    Let $g\colon \mathrm{Bl}_p(\mathbb{P}^3)\to \mathbb{P}^3$ be the blowing up at $p$ with exceptional divisor $E$ and let $q\colon \mathrm{Bl}_p(\mathbb{P}^3)\to \mathbb{P}^2$ be the projection of lines from $p$.
    Set $Q'_i:=g^*Q_i-E$ and $C_{ij}\coloneqq Q'_i\cap Q'_j$.

We first consider the case when $Q_1$, $Q_2$, and $Q_3$ are all general and smooth. Since $\mathcal{N}$ has no base locus of positive dimension, we can choose a smooth general quadric $Q_1\in|\mathcal{N}|$ such that $Q_1$ does not contain any line passing through $p$ or any other base point of $\mathcal{N}$, or the strict transform of a line in $\mathrm{Bl}_p(\mathbb{P}^3)$ passing through one of the base points infinitely near to $p$.
    Then, since $Q_1$ is smooth, we can view the strict transform $Q'_1$ as a one-point blowing up (at $p$) of $\mathbb{P}^1\times\mathbb{P}^1$. Let $L_1$ and $L_2\subset Q'_1$ be the strict transforms of the two lines in $\mathbb{P}^1\times\mathbb{P}^1$ passing through $p$.
    Let $\mathcal{N}'$ denote the net generated by $Q'_1,Q'_2$ and $Q'_3$ on $\mathrm{Bl}_p(\mathbb{P}^3)$.
    By the choice of $Q_1$, no base point of $\mathcal{N}'$ lies in $L_1$ or $L_2$.
    Furthermore, $\mathcal{N}'$ has seven base points $\{p_1,\ldots,p_7\}$ possibly containing infinitely near points, and these seven points are also the base points of the pencil generated by $C_{12}$ and $C_{13}$.
    Adding $p$ to $\{p_1,\ldots,p_7\}$, we can regard the eight base points $\calP$ of $\N$ as $\{p,p_1,\ldots,p_7\}$.
    We note that $C_{23}$ also passes through the base points of $\mathcal{N}'$ and that $q|_{Q'_1}\colon Q'_1\to\mathbb{P}^2$ is a birational morphism contracting only $L_1$ and $L_2$.
    Since the members of $\mathcal{N}'$ are ample and $\mathcal{N}'$ has no base point on $L_1$ or $L_2$, the support of $C_{12}$ or $C_{13}$ does not contain $L_1$ or $L_2$.
    Hence, $q_*C_{12}$ and $q_*C_{13}$ intersect at $q(p_1),\ldots,q(p_7),q(L_1)$ and $q(L_2)$.
    Replacing $Q_1$ with $Q_2$, we see that $q_*C_{23}$ is also a cubic and its intersection with $q_*C_{12}$ contains, at least, the points $q(p_1),\ldots,q(p_7)$.
    By the choice of $Q_1$, we see that $q_{*}C_{23}$ does not pass through $q(L_1)$ or $q(L_2)$. 
Hence, $q_*C_{12},q_*C_{23}$ and $q_*C_{31}$ generate a net of cubics and this net is contained in $G(\N,p)$ as its seven base points $q(p_1),\ldots,q(p_7)$ are precisely the points in $S_{\calQ}$, where $\calQ=G(\calP,p)$. In particular, $\mathrm{dim}\,G(\N,p)\ge2$.

We now observe that $\dim(G(\N,p))\leq 2$. Indeed, we can consider $Y$ the blowing up of $\mathbb{P}^2$ along the seven points $q(p_1),\ldots,q(p_7)$, which is a weak del Pezzo surface of degree two since $-K_Y$ is base point free (cf.~Definition \ref{defi--del-pezzo}). Since $\mathrm{dim}\,\H^0(Y,-K_Y)=3$ and $|G(\mathcal{N},p)|\subset |-K_Y|\subset |-K_{\mathbb{P}^2}|$, we obtain the desired upper bound.

Finally, we will show that $G(\N,p)$ is independent of the choice of generators $Q_1$, $Q_2$, and $Q_3$. 
First, we note that the construction in the second paragraph of this proof is independent of the choice of $Q_1$. 
Indeed, if you also choose $Q_2$ so general that $Q_2$ does not contain any line passing through two base points as in the previous paragraph, $\mathcal{Q}$ is also defined as $q(p_1),\ldots,q(p_7)$.
This means that if $Q_1$ and $Q_2$ are sufficiently general, $G(\calP,p)$ is the same when we replace the role of $Q_1$ and $Q_2$ in the previous discussion.
Next, let $f_1$, $f_2$ and $f_3\in V_{\N}$ be the polynomials corresponding to $Q_1$, $Q_2$ and $Q_3$.
For any $a\in \mathbb{C}$, let $Q_4$ be the quadric generated by $af_1+f_3$. Setting $Q'_i=g^*Q_i-E$ and $C_{ij}=Q_i'
\cap Q_j'$ as in the first paragraphs, we claim that $q_*C_{24}\in |G(\mathcal{N},p)|$.
Indeed, $C_{24}$ is a Cartier divisor on $Q'_2$ linearly equivalent to $C_{12}$ and to $C_{23}$ because if $Q'_2\cap Q'_4$ contains a subvariety of dimension two, then it contradicts the assumption that $\calP$ is regular.
Thus, $q_*C_{24}$ is linear equivalent to $q_*C_{12}$ and hence a cubic by \cite[Tag 02S1]{stacksproject}.
Moreover, letting $h_1$ and $h_3$ be the polynomials in $\H^0(\mathbb{P}^2,\mathcal{O}_{\mathbb{P}^2}(3))$ whose pullbacks to $Q'_2$ are $f_1|_{Q'_2}$ and $f_3|_{Q'_2}$ respectively, then the cubic $C'$ defined by $ah_1+h_3\in V_{G(\mathcal{N},p)}$ contains $q_*C_{24}$ scheme theoretically. Comparing their degrees, it follows that $|G(\mathcal{N},p)|\ni C'=q_*C_{24}$.  
This means that if you change the generators from $Q_1$, $Q_2$ and $Q_3$ to $Q_1$, $Q_2$, and $Q_4$, then $G(\N,p)$ is not changed. This shows that $G(\N,p)$ is independent of the choice of general generators as we claimed.
\end{proof}

\begin{rmk}
    In Proposition \ref{qtoc}, we can choose $p$ as a base point with multiplicity larger than one, and the isomorphism class of $G(\N,p)$ depends on the choice of $p$.
\end{rmk}

Proposition \ref{qtoc} above motivates the following definition.

\begin{defi}\label{def:good}
   A net $\N\in \mathcal{X}_{2,2,3}$ of quadrics in $\P^3$ is called \textbf{good} if there exists $\mathcal{P}\in \P^{3,8}$ regular such that $\N=\mathcal{L}_2(\calP)$. Similarly, a net $\mathcal{M}\in \mathcal{X}_{2,3,2}$ of cubics in $\P^2$ is called \textbf{good} if it arises from a good net of quadrics as in Proposition \ref{qtoc}, that is, $\mathcal{M}=\calL_3(\calQ)$ for some $\mathcal{Q}\in\P^{2,7}$ regular.
\end{defi}

Thus, we have established a correspondence between (what we call) good nets of quadrics in $\P^3$, together with a choice of a base point, and good nets of cubics in $\P^2$. We will refer to this correspondence $\N \mapsto G(\N,p)$ given by Proposition \ref{qtoc} as the {\it generalized Gale transformation} defined by the choice of the point $p$. This correspondence indeed generalizes the classical Gale transformation of typical nets (see \cite[p.~138]{GrossHarris}).  

The following examples show that there are indeed good nets of quadrics $\N$ with quite singular discriminant curves and that the seven points determined by a choice of the $\P^3$-base point of $\N$ can be quite degenerate.

\begin{exe}\label{exe-A_4}
    Choose coordinates $(x:y:z:w)$ in $\P^3$ and consider three quadric surfaces $Q_1$, $Q_2$ and $Q_3$ given by the equations $x^2-2xy=0$, $\tfrac14 x^2-xy+2xz+y^2+2yw=0$, and $2zw=0$. The net generated by $Q_1$, $Q_2$ and $Q_3$ has the associated discriminant quadric curve defined by the equation
\[
\begin{vmatrix}
\lambda+\tfrac14\mu & -\lambda-\tfrac12\mu & \mu & 0\\
-\lambda-\tfrac12\mu & \mu & 0 & \mu\\
\mu & 0 & 0 & \nu\\
0 & \mu & \nu & 0
\end{vmatrix}=\lambda^{2}\nu^{2}+2\lambda\mu^{2}\nu+\mu^{4}+\mu^{3}\nu=0.
\]
This is a quartic curve with an $\mathbf{A}_4$ singularity and an $\mathbf{A}_2$ singularity.
We can check that the net is good.
Moreover, the set-theoretic base locus of the net is the set $$\{(0\!:\!0\!:\!1\!:\!0),\ (0\!:\!0\!:\!0\!:\!1),\ (0\!:\!-2\!:\!0\!:\!1),\ (2\!:\!1\!:\!0\!:\!0)\},$$ with the four points having multiplicities three, two, one and two, respectively.
\end{exe}

\begin{exe}\label{exe-A_5}
Similarly, we consider the good net of quadrics generated by quadrics given by the equations $x^2-2xy=0$, $2xz+2yw=0$, and $\Bigl(\tfrac14-a_1\Bigr)x^2-xy+y^2+2zw=0$, for some $a_1\in \C$. Then the associated discriminant quadric curve has the equation
\[
\begin{vmatrix}
\lambda+\left(\tfrac14-a_1\right)\nu & -\lambda-\tfrac12\nu & \mu & 0\\
-\lambda-\tfrac12\nu & \nu & 0 & \mu\\
\mu & 0 & 0 & \nu\\
0 & \mu & \nu & 0
\end{vmatrix} =\lambda^{2}\nu^{2}+2\lambda\mu^{2}\nu+\mu^{4}+\mu^{2}\nu^{2}+a_1\nu^{4}=0
\]
and an $\mathbf{A}_5$ singularity. The base locus of the net contains $(0\!:\!0\!:\!1\!:\!0)$ with multiplicity four and $(0\!:\!0\!:\!0\!:\!1)$ with multiplicity two. If $a_1\neq 0$ it has two additional points $(2\!:\!1\!:\!t\!:\!-2t)$ with $t^2=-a_1$, each of multiplicity $1$. If $a_1=0$ (in which case there is also an $\mathbf{A}_1$ singularity as well as the $\mathbf{A}_5$ singularity), then there is only one additional point $(2\!:\!1\!:\!0\!:\!0)$ with multiplicity two.
\end{exe}

\begin{exe}\label{exe-A_6}
The good net generated by quadric surfaces given by equations $-2xy=0$, $-x^2+2xz+2yw=0$ and $y^2+2zw=0$ has the associated discriminant quartic curve with equation
\[
\begin{vmatrix}
-\mu & -\lambda & \mu & 0\\
-\lambda & \nu & 0 & \mu\\
\mu & 0 & 0 & \nu\\
0 & \mu & \nu & 0
\end{vmatrix} =\lambda^{2}\nu^{2}+2\lambda\mu^{2}\nu+\mu^{4}+\mu\nu^{3}=0,
\]
and hence contains an $\textbf{A}_6$ singularity. The set-theoretic base locus of this net is
$\{(0\!:\!0\!:\!1\!:\!0),\ (0\!:\!0\!:\!0\!:\!1),\ (2\!:\!0\!:\!1\!:\!0)\}$, and the three points have multiplicities three, four and one, respectively. 
\end{exe}

\begin{exe}\label{exe-E_7}
Finally, consider the three quadric surfaces given by the equations $z^2=0$,  $w(y+x)+xy=0$, and $zw+(x+y)^2=0$. Then the corresponding net is a good net of quadrics such that the associated discriminant quartic curve has equation
    \[
    16\begin{vmatrix}\nu & \tfrac{1}{2}\mu+\nu & 0 & \tfrac{1}{2}\mu \\
    \tfrac{1}{2}\mu+\nu & \nu & 0 & \tfrac{1}{2}\mu\\
    0 & 0 & \lambda & \tfrac{1}{2}\nu\\
    \tfrac{1}{2}\mu & \tfrac{1}{2}\mu & \tfrac{1}{2}\nu & 0
    \end{vmatrix}=\mu(4\lambda\mu^2+\nu^2(\mu+4\nu))=0,
    \]
   and hence an $\mathbf{E}_7$ singularity. Moreover, the set-theoretic base locus of the net in $\P^3$ consists of a single point with multiplicity eight, namely the point $(0:0:0:1)$ in the chosen coordinates. Note that this net defines an extremal rational elliptic threefold over $|\N|^\vee$ by \cite{extremal}.
\end{exe}

Next, we observe that any net of plane cubic curves with a regular base locus as in Definition \ref{def: regular} (ii)  gives rise to a double plane branched at a reduced plane quartic curve with at worst \textit{ADE} singularities (cf., Corollary \ref{cor:good-weak}). 
Before stating our result, we recall the definition of (weak) del Pezzo surfaces.

\begin{defi}\label{defi--del-pezzo}
 Let $Y$ be a smooth projective surface.
 If $-K_Y$ is big and nef, we say that $Y$ is a {\it weak del Pezzo surface}.
 The {\it degree} of $Y$ is set as the integer $K_Y^2>0$. If $-K_Y$ is ample, then we say that $Y$ is {\it del Pezzo}.
Otherwise, contracting all $(-2)$-curves $C$ such that $K_Y\cdot C=0$, we obtain a unique normal surface $\overline{Y}$ with only canonical singularities and with $-K_{\overline{Y}}$ ample.
We call $\overline{Y}$ the {\it anticanonical model} of $Y$.
\end{defi}

\begin{lema}\label{doubleplane}
    Let $\mathcal{M}$ be a good net of plane cubics. Let $\pi: Y\to \P^2$ denote the blowing up of $\mathbb{P}^2$ at the seven base points of $\mathcal{M}$, and let $\bar{\pi}:\bar Y \to \P^2$ denote the corresponding relative canonical model. Then $\overline{Y}$ is the anticanonical model of $Y$ and $\bar{\pi}$ is a degree two cover branched over a plane quartic $B$ with only ADE singularities.
\end{lema}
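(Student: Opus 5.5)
The plan is to first show that $Y$ is a weak del Pezzo surface of degree two whose anticanonical system $|-K_Y|$ is base point free. The double cover structure then comes from the standard theory of degree two (weak) del Pezzo surfaces.

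\textbf{Step 1: $-K_Y$ is base point free of degree two.} Since $\mathcal{M}$ is good, by Definition \ref{def:good} it equals $G(\N,p)$ for a good net of quadrics $\N$ and a $\P^3$-base point $p$. The proof of Proposition \ref{qtoc} produces three generators $q_*C_{12},q_*C_{23},q_*C_{31}$ of $\mathcal{M}$ whose only common points are the seven base points $q(p_1),\ldots,q(p_7)$, counted with their infinitely near structure. Their strict transforms on $Y$ are members of $|-K_Y|=|3\pi^*H-\sum E_i|$. I will check that these three strict transforms have no common point on $Y$. The regularity of the configuration, namely that $Q_1'\cap Q_2'$ is smooth at the base points, is what should make the successive blow-ups separate the curves. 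Once this is done, $|-K_Y|$ is base point free, so $-K_Y$ is nef. Moreover $K_Y^2=9-7=2>0$, so $Y$ is a weak del Pezzo surface of degree two, and $h^0(-K_Y)=3$ by Riemann--Roch together with Kawamata--Viehweg vanishing. Hence the morphism $\varphi_{|-K_Y|}\colon Y\to\P^2\cong|\mathcal{M}|^\vee$ is defined everywhere and is generically finite of degree $(-K_Y)^2=2$.

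\textbf{Step 2: identifying the cover.} Let $Y\to\overline{Y}\to\P^2$ be the Stein factorization of $\varphi_{|-K_Y|}$. The curves contracted by $Y\to\overline{Y}$ are exactly the curves $C$ with $-K_Y\cdot C=0$, and these are the $(-2)$-curves. So $\overline{Y}$ is the anticanonical model, and it has only canonical (ADE) singularities. It also coincides with the relative canonical model over $\P^2$, since $K_Y$ is the pullback of $K_{\overline{Y}}$ and $-K_{\overline{Y}}$ is the pullback of an ample class. The finite map $\bar\pi\colon\overline{Y}\to\P^2$ has degree two, hence is a double cover. Writing $\bar\pi_*\mathcal{O}_{\overline{Y}}=\mathcal{O}\oplus\mathcal{O}(-b)$, Hurwitz's formula $K_{\overline{Y}}=\bar\pi^*(K_{\P^2}+\tfrac12 B)$ together with $K_{\overline{Y}}=-\bar\pi^*\mathcal{O}(1)$ forces $\deg B=4$.

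\textbf{Step 3: singularities of $B$.} $\overline{Y}$ is normal (it is a Stein factorization of a normal surface), and a double cover of $\P^2$ is normal precisely when its branch divisor $B$ is reduced. Since $\overline{Y}$ has only canonical singularities, the remark following Lemma \ref{hypersurftoric} shows that $B$ has only ADE singularities.

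The main obstacle is Step 1, proving base point freeness of $|-K_Y|$ when the seven points include infinitely near points. The danger is that the strict transforms of the generators could share a common tangent direction at an infinitely near point. I would handle this by using the conclusion of Proposition \ref{qtoc} that $q_*C_{12}$ and $q_*C_{23}$ meet in exactly the seven base points (away from $q(L_1),q(L_2)$), so that the local intersection multiplicities are used up by the blow-ups. An alternative is to argue directly on $Q_1'$: the map $Q_1'\to\P^2$ contracts only $L_1$ and $L_2$, and $\mathcal{N}'|_{Q'_1}$ is a pencil whose base locus is the seven smooth points $p_i$. This identifies $Y$ with the blow-up of $Q_1'$ at the $p_i$ followed by the contraction of $L_1,L_2$.
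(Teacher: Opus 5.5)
Your proposal follows the paper's proof: base point freeness of $|-K_Y|$ makes $Y$ a weak del Pezzo surface of degree two whose anticanonical morphism has degree two, the relative canonical model is the Stein factorization of that morphism, Hurwitz's formula forces $\deg B=4$, and the canonical singularities of $\overline{Y}$ give ADE singularities of $B$. The only difference is that the paper takes base point freeness as given for a good net (as already asserted in the proof of Proposition \ref{qtoc}), whereas you sketch a justification; in that sketch, $q_*C_{12}$ and $q_*C_{23}$ do not meet only in the seven base points, since two cubics meet in nine points, and the correct version is that $q_*C_{12}\cap q_*C_{13}$ consists of the seven base points together with $q(L_1),q(L_2)$, which $q_*C_{23}$ avoids.
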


\begin{proof}
By assumption, the linear system $|-K_Y|$ on $Y$ is base-point free, and $-K_Y$ is big. Thus, $Y$ is a weak del Pezzo surface of degree two, and $\pi$ is a generically finite morphism of degree two (cf.~\cite{demazure}). Now, if $B$ denotes the branch divisor, then 
\[
K_{\bar Y}=\bar{\pi}^*\left(K_{\P^2}+\tfrac{1}{2}B\right)
\]
and therefore  $\left(K_{\P^2}+\tfrac{1}{2}B\right)^2=1$, which implies that $B$ has degree four.
Moreover, since $\overline{Y}$ has only canonical singularities, $B$ has only ADE singularities.
\end{proof}

\begin{rmk}\label{rmk:duality}
It is convenient to our purposes to diagram the maps in Lemma \ref{doubleplane} as follows.   
\begin{equation}\label{eq:diag_cubics}
\xymatrixcolsep{5pc} \xymatrix{
Y \ar[d]_{\pi} \ar[r]^{\text{resol. of sing.}} \ar[dr]^{\varphi}& \bar{Y} \ar[d]^{\bar{\pi}}_{2:1}\\
\P^2 \ar@{.>}[r]_-{\psi} & |\mathcal{M}|^{\vee}\simeq |-K_Y|^{\vee} \simeq \P^2 \supset B
}
\end{equation}
Moreover, it is also convenient to identify the $\P^2$ in which all the cubics in $\mathcal{M}$ live with the net itself in the following way. If $Z$ denotes the base locus of $\mathcal{M}$, then the rational map $\psi$, known as the Geiser involution, is such that given $p \in \P^2\backslash Z$ we have that $\psi(p)=\psi(p')$, where $p'$ is the ninth base point of the pencil of cubics through $Z\cup\{p\}$. Thus, each point in $q\in |\mathcal{M}|^{\vee}$ can be identified with a line in $\P^2$, namely the line joining the two points in $\psi^{-1}(q)$, hence with a point in $(\P^2)^{\vee}$.
\end{rmk}

In contrast, we have a result on the singularities of the discriminant of the net of quadric surfaces.
Before stating this, we recall the definition of the discriminant of a linear system of quadrics.

\begin{defi}
 Let $\calL\in\mathcal{X}_{k,2,n}$ be a $k$-dimensional linear system of quadrics in $\P^n$ with generators $Q_1,\ldots,Q_{k+1}$.   Suppose that each $Q_i$ is expressed as a symmetric matrix $A_i\in \mathbb{M}_{(n+1)\times(n+1)}$.
 Then $\calL$ defines a (possibly zero) homogeneous polynomial $\mathrm{det}\left(\sum_{i=1}^{k+1}\lambda_iA_i\right)$ of degree $(n+1)$ in $\mathbb{C}[\lambda_1,\ldots,\lambda_{k+1}]$.
 We set $\Delta(\mathcal{L})$ as the closed subscheme of $|\calL|\cong\P^k$ defined by $\mathrm{det}\left(\sum\lambda_iA_i\right)$.
 Here, if $\mathrm{det}\left(\sum\lambda_iA_i\right)\equiv0$, then $\Delta(\calL)=\P^k$ and we regard this as GIT unstable. 
\end{defi}

\begin{prop}\label{prop:good--ADE--corresp}
Let $\mathcal{N}\in \mathcal{X}_{2,2,3}$ be a net of quadrics in $\mathbb{P}^3$ and let $\Delta(\mathcal{N})$ be the discriminant. If $\Delta(\mathcal{N})$ has only ADE singularities, then the net $\N$ is good.
\end{prop}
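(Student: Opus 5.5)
We prove the contrapositive: if $\N$ is not good, then $\Delta(\N)$ is $\P^2$, or non-reduced, or has a non-ADE singularity. By Definition \ref{def:good} and Definition \ref{def: regular}(i), non-goodness means one of three things: (a) the base locus of $\N$ has positive dimension; (b) the base locus is zero-dimensional but $\N$ is not the full system $\calL_2(\calP)$ of quadrics through it (for a zero-dimensional complete intersection of three quadrics this cannot happen, so the base scheme has length $8$ and the case is vacuous once (a) is excluded); or (c) there is a $\P^3$-base point $p$ and two distinct quadrics $Q,Q'\in|\N|$ that are both singular at $p$. The key tool is the local structure of the determinant near a point of $|\N|$ where the corresponding quadric is degenerate. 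If $Q_\lambda$ has corank $r$, then $\det$ vanishes to order at least $r$ at $\lambda$. If $r\ge 3$, the point has multiplicity $\ge 3$ and is therefore not ADE, unless the curve is reducible there in a way that still gives multiplicity $\ge3$ (this is never ADE). So we may assume every member has rank $\ge 2$.

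\textbf{Case (c).} Suppose $Q,Q'$ are both singular at the base point $p=(0:0:0:1)$. Then the whole pencil $\langle Q,Q'\rangle$ consists of cones with vertex at $p$. Hence the line $L=\overline{QQ'}\subset|\N|$ is contained in $\Delta(\N)$. Pick a third generator $Q''$. Since $p$ is a base point, $Q''$ passes through $p$, so its matrix has $a_{44}=0$; write $Q''=w\,\ell(x,y,z)+q(x,y,z)$. Expanding $\det(\lambda A+\mu A'+\nu A'')$ in $\nu$ shows that $\det$ is divisible by $\nu^2$ (the last row and column are both multiples of $\nu$, with zero corner entry). So $\Delta(\N)\supseteq 2L$ and $\Delta$ is non-reduced.

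\textbf{Case (a).} Suppose the base locus contains a curve. If it contains a line, choose coordinates with the line $\{x=y=0\}$. Each quadric is then $x\ell_1+y\ell_2$, and the matrices have a $2\times2$ zero block in the $(z,w)$ part. Then $\det$ is the square of the determinant of the $2\times2$ off-diagonal block, so $\Delta$ is non-reduced or everything. If it contains a conic $C\subset H=\{x=0\}$, then every member is $q_0+x\ell$ with $q_0$ a fixed equation of $C$ in $H$, up to scalar. The net then contains a $2$-dimensional subspace of reducible quadrics $x\ell$, whose matrices have rank $\le2$; that pencil yields a line $L$ in $\Delta$. A similar row/column expansion should exhibit $L$ with multiplicity $\ge2$. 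If the base curve is a twisted cubic or has larger degree, the net is the net of quadrics through a twisted cubic, whose discriminant is classically a double conic (or contains rank-$\le2$ loci). Degree $\ge 4$ curves lie on at most a pencil of quadrics, so that case is excluded.

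The main obstacle is case (a) with a conic, or the mixed situation in case (c) where $\det\equiv0$. There we must verify carefully, by a normal-form computation of $\det$, that the line of rank-$\le2$ quadrics really occurs with multiplicity $\ge2$ or produces a point of multiplicity $\ge3$. I would do this by explicit block-matrix expansion in adapted coordinates, using that all rank $\le 2$ members impose determinantal vanishing of order $\ge 2$ along their locus.
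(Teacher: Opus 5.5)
Your proposal is correct. Your case (c) is exactly the paper's first step: the last row and column of $\lambda A+\mu A'+\nu A''$ are multiples of $\nu$ with zero corner entry, so $\nu^2$ divides the determinant.

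Where you differ is the positive-dimensional base locus. The paper does not classify the base curve $C$; it splits according to whether some member is singular along $C$.
\begin{itemize}
\item If some member is, that member has rank $\le 2$. In adapted coordinates the other generators then have a vanishing block, which gives a point of multiplicity $\ge 4$ on $\Delta(\N)$.
\item If no member is, then at each smooth point $p\in C$ the differentials of the members lie in the two-dimensional space of linear forms vanishing on $T_pC$. So some member $Q_p$ is singular at the base point $p$, which makes $\Delta(\N)$ singular at $[Q_p]$. Since $Q_p$ varies with $p$, $\Delta(\N)$ has infinitely many singular points and is therefore non-reduced.
\end{itemize}

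Your route instead classifies an irreducible component of the base curve and computes case by case.
\begin{itemize}
\item For a line you get $\det=(\det R)^2$ directly. This is cleaner than the paper's Case I, since you need no member singular along the line.
\item For a smooth conic, a pencil of rank $\le 2$ members gives a line of double points.
\item For a twisted cubic you get the classical double conic.
\end{itemize}
This yields sharper information about $\Delta(\N)$, for instance that it is a double conic in the line and twisted-cubic cases. The cost is that the classification must be complete, whereas the paper's dichotomy is uniform in $C$.

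Two small points need tightening.

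First, the claim that curves of degree $\ge 4$ lie on at most a pencil of quadrics is false for plane curves, and your list also omits plane cubics. This is easily repaired: a quadric not containing a plane meets it in a conic, so any plane curve of degree $\ge 3$ in the base locus forces every member to contain that plane. Your line case then applies. You should also phrase the classification in terms of an irreducible component of the reduced base curve, so that reducible configurations are covered.

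Second, in the conic case there is nothing left to ``verify carefully''. By your own corank principle, every point of $L$ is a point of multiplicity $\ge 2$ of $\Delta(\N)$. A reduced curve has only finitely many singular points, so $L$ is a multiple component, or $\Delta(\N)=\P^2$.
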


\begin{proof}
We will prove that if $\N$ is not good, then  $\Delta(\N)=\P^2$, $\Delta(\mathcal{N})$ is not reduced, or it has singularities that are worse than ADE. In what follows, given a choice of three generators $Q_1, Q_2$ and $Q_3$ of $\N$ and a convenient choice of coordinates in $\P^3$, we will represent $\N$ by a net of $4\times4$ symmetric matrices $\lambda A_1+\mu A_2+\nu A_3$, with each matrix ${A}_i$ associated with $Q_i$ in the usual way.

First, assume that $\mathcal{N}$ has two members $Q_1$ and $Q_2$ and a base point $p$ such that $Q_1$ and $Q_2$ are singular at $p$. Then  $\Delta(\mathcal{N})$ must contain a double line or $\Delta(\N)=\P^2$. To see this, choose a third generator $Q_3$ and choose coordinates in $\P^3$ such that $p=(0:0:0:1)$ and $\N$ corresponds to a net of matrices as above such that $A_1$ is diagonal with zero for the fourth entry, $A_2$ has zero for all the entries of the fourth row and the fourth column and $A_3$ has zero for the entry at the fourth row and fourth column. Then, we see that $\mathrm{det}(\lambda A_1+\mu A_2+\nu A_3)$ is divisible by $\nu^2$ as an element of the polynomial ring $\mathbb{C}[\lambda,\mu,\nu]$.

Next, assume that $\mathcal{N}$ has a fixed part of dimension at least one. We may assume that $\Delta(\mathcal{N})\ne \P^2$ and then $\mathcal{N}$ has a smooth member. In this case, $\mathcal{N}$ has a curve $C\subset\mathbb{P}^3$ as a fixed part, and there are two cases to consider.
\begin{enumerate} 
    \item[{Case} I] Suppose that there exists a member $Q_1\in\mathcal{N}$ such that $Q_1$ is singular along $C$.
Then $\rank\,Q_1\le2$.
First, we deal with the case where $\rank\,Q_1=2$.
In this case, we may choose coordinates $(x:y:z:w)$ in $\P^3$ such that $Q_1=\{xy=0\}$ and $C=\{x=y=0\}$. 
Thus, letting $Q_2$ and $Q_3$ be two other generators of $\mathcal{N}$ and using the fixed coordinates to represent $\N$ by a net of $4\times4$ symmetric matrices as above, it follows that the discriminant curve $\Delta(\N)=\{\mathrm{det}(\lambda A_1+\mu A_2+\nu A_3)=0\}$  has a singular point at $(1:0:0)$ with multiplicity at least four. Indeed, since both $Q_2$ and $Q_3$ contain $C$, the two matrices $A_2$ and $A_3$ are of the form:

\begin{equation}\label{eq:matrix_quadruple_pt}    
    \begin{pmatrix}
           * & * & * & * \\ 
           * & * & * & * \\ 
           * & * & 0 & 0\\ 
           * & * & 0 & 0
        \end{pmatrix}.
\end{equation}

Next, suppose that $\rank\,Q_1=1$ and choose coordinates $(x:y:z:w)$ in $\P^3$ such that $Q_1=\{x^2=0\}$.
Let $Q_2$ and $Q_3$ be two other generators of $\N$.
In the fixed coordinates, we can represent $Q_2$ and $Q_3$ by matrices of the form 
\[
A_i=\begin{pNiceArray}{c|ccc}[margin]
 \ast & \ast & \ast & \ast \\ \hline
\ast & \Block{3-3}{\tilde{A}_i} & &  \\
\ast & & &  \\
\ast & &  & \\
\end{pNiceArray}, \qquad i=2,3,
\]
where $\tilde{A}_2$ and $\tilde{A}_3$ are $3\times 3$ (complex) symmetric matrices. Now, since $\mathcal{N}$ has the curve $C$ as a fixed part, we see that these two matrices $\tilde{A}_2$ and $\tilde{A}_3$ define two conics in the plane that share a common irreducible component. Thus, making a further suitable projective coordinate change, we may assume that
$Q_2$ and $Q_3$ are again represented by two matrices as in (\ref{eq:matrix_quadruple_pt}). In particular, in this case, the discriminant curve also has a singular point with multiplicity at least four.
\item[{Case} II] Suppose now that there is no member $Q\in\mathcal{N}$ such that $Q$ is singular along $C$.
Then for any point $p\in C$, there exists a quadric $Q_{p}\in \N$ that is singular at $p$ and $\Delta(\mathcal{N})$ has at least a double point at the point corresponding to $Q_p$. Since $Q_{p}$ and $Q_{p'}$ are different for any two general distinct points $p,p'\in C$, this implies that $\Delta(\mathcal{N})$ has infinitely many singularities, 
and we conclude that $\Delta(\mathcal{N})$ is not reduced.
\end{enumerate}
This completes the proof.
\end{proof}

In particular, given a good net $\N\in\mathcal{X}_{2,2,3}$, together with a choice of a base point $p\in \P^3$, and its associated good net of cubics $G(\N,p)$, one can now ask whether the corresponding plane quartics $\Delta(\N)$ and $B$ are isomorphic (cf. {\cite[Corollary 3.2]{eisenbud_gale} and \cite[Proposition 6.3.11]{cag}}).

\begin{prop}
Let $\N$ be a good net of quadrics in $\P^3$. For each choice of $p\in \P^3$, a base point of $\N$ not infinitely near, consider the weak del Pezzo surface $Y$ of degree two obtained by blowing up the seven points determined by $\calP$ and $p$. Then, the anticanonical model of $Y$ is isomorphic to the double cover of $\P^2$ branched along $\Delta(\N)$.  In particular, $\Delta(\N)$ has only ADE singularities.\label{extended}
\end{prop}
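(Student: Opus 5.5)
We identify the double cover $\varphi\colon Y\to|G(\N,p)|^\vee$ with the map $Y\to|\N|^\vee$ and its branch curve with $\Delta(\N)$. Throughout we use the notation of the proof of Proposition \ref{qtoc}: $g\colon \mathrm{Bl}_p(\P^3)\to\P^3$, the projection $q\colon \mathrm{Bl}_p(\P^3)\to\P^2$, and the strict transforms $Q'_i$. The map $\N\to G(\N,p)$, $Q_i\mapsto$ the cubic cut out by the restriction of $Q'_i$ to a fixed general $Q'_1$, is linear. This was essentially shown in the last paragraph of the proof of Proposition \ref{qtoc}. It gives a canonical isomorphism $|\N|\cong|G(\N,p)|\cong|-K_Y|$, hence $|\N|^\vee\cong|-K_Y|^\vee$, where the target of $\varphi$ lives.

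\textbf{Step 1: geometric interpretation of $\varphi$ through $q$.} A point $x\in\P^2$ not in the base locus corresponds to a line $\ell_x$ through $p$. The quadrics of $\N$ containing $\ell_x$ form a pencil, since containing a line through a base point imposes two further linear conditions on quadrics. This pencil is a line in $|\N|$, that is, a point of $|\N|^\vee$. We check that this point is $\varphi(x)$. A cubic $q_*(Q'_1\cap Q'_i)$ passes through $x$ exactly when $Q'_i$ meets the fibre $\ell_x$ at a point of $Q'_1$. After reducing to a general $Q_1$ and varying generators, this becomes the condition that $Q_i$ contains $\ell_x$. Thus $\varphi$ is the rational map $x\mapsto\{Q\in\N:\ell_x\subset Q\}$. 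It extends to $Y$ because the base points of $G(\N,p)$ are the images of the remaining base points of $\N$.

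\textbf{Step 2: identifying the branch curve with $\Delta(\N)$.} A pencil $\Lambda\subset\N$ is a point of $|\N|^\vee$. Its base locus is a quartic curve through the eight base points. The lines through $p$ contained in all members of $\Lambda$ are the components of this base curve through $p$. For a general $\Lambda$, the base curve is an elliptic quartic with no line, so this reading needs care. The preimage $\varphi^{-1}(\Lambda)$ in $Y$ consists of two points, and it is cleaner to argue dually. A line $L\subset|\N|$ corresponds to a point of $|\N|^\vee$, and the curve $\varphi^{*}L$ is a member of $|-K_Y|$ coming from a quadric $Q\in\N$. Concretely it is $q(Q'\cap Q'_1)$, or more intrinsically the image under $q$ of the curve of lines through $p$ lying on $Q$, since $\ell\subset Q$ if and only if $\ell$ meets $Q$ doubly at $p$. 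The double cover $\varphi$ is branched over $\Delta$ exactly when, for lines $L$ tangent to $\Delta$, the curve $\varphi^*L$ is singular or non-reduced. So we compare singular members of $|-K_Y|$ with singular quadrics. The curve $\varphi^*L$ is the projection of the conic of lines through $p$ on $Q$, intersected with the base data. This is the classical argument, and we would reproduce it. Alternatively, and more robustly, we compute both sides as subschemes. By Step 1, the fibre over a pencil $\Lambda$ is the set of lines through $p$ lying in every member of $\Lambda$. Writing quadrics through $p=(0:0:0:1)$ as $w\,l_Q(x,y,z)+c_Q(x,y,z)$, a line in direction $v$ lies in $Q$ if and only if $l_Q(v)=c_Q(v)=0$. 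Hence $\varphi^{-1}(\Lambda)$ is $\{l_{1}=l_{2}=c_{1}=c_{2}=0\}$ for two generators of $\Lambda$. Eliminating $v$ on the line $l_1=l_2=0$ gives a quadratic equation whose discriminant should equal the restriction of $\det$ to $\Lambda$. This is a direct $4\times4$ determinant identity, obtained by bordering the conic matrices with the linear forms.

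\textbf{Step 3: conclusion.} The branch divisor $B$ of Lemma \ref{doubleplane} is a reduced quartic, and it equals the discriminant computed in Step 2, hence $\Delta(\N)$ under the identification $|\N|\cong|\N|^{\vee\vee}$. The anticanonical model $\overline Y$ is the Stein factorization of $\varphi$, which is the double cover branched along $B$. The ADE conclusion then follows from Lemma \ref{doubleplane}. The main obstacle is Step 2 in degenerate cases: infinitely near base points and points of $|\N|^\vee$ whose fibre is positive dimensional. There, matching $B$ with $\Delta(\N)$ scheme-theoretically will be done by proving equality of the two quartics on a dense open set and invoking that both are degree-four divisors. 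Regularity of $\calP$ will be used to exclude $\Delta(\N)=\P^2$ and non-reducedness.
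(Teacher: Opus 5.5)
Your Step 1 is off by a projective duality, and Step 2 inherits the error.

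\textbf{The duality error.} For a fixed $Q_1$, the linear map $Q\mapsto q_*(Q'_1\cap Q')$ has $Q_1$ in its kernel. Its image is therefore only the pencil of cubics through $q(p_1),\dots,q(p_7),q(L_1),q(L_2)$, and it does not give $|\N|\cong|G(\N,p)|$.

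Likewise, the quadrics of $\N$ containing $\ell_x$ do not form a pencil. Every member already meets $\ell_x$ at $p$, so containing $\ell_x$ imposes two conditions on the three-dimensional space $V_\N$. This leaves a single quadric $Q_x$ whenever $x$ is not a base point. A pencil occurs exactly when $x=q(p_i)$, which is why $\varphi(E_i)$ is a line.

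The correct identification is $|G(\N,p)|\cong|\N|^\vee$, sending a pencil $\langle Q_i,Q_j\rangle$ to the projection $q_*(Q'_i\cap Q'_j)$ of its base curve. In coordinates with $p=(0:0:0:1)$ and $Q_i=\{w\,l_i+c_i=0\}$, this is the cubic $l_ic_j-l_jc_i=0$. Hence $\varphi$ takes values in $|G(\N,p)|^\vee\cong|\N|$, which is where $\Delta(\N)$ lives, and $\varphi(x)=Q_x$.

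With your identification, Step 2 cannot work:
\begin{itemize}
\item for a general pencil, $\{l_1=l_2=c_1=c_2=0\}$ is empty, and $\{l_1=l_2=0\}$ is a point, not a line;
\item a smooth quadric contains exactly two lines through $p$, not a curve of them;
\item a line meeting $Q$ doubly at $p$ only lies in $T_pQ$, not necessarily in $Q$;
\item the curve in $|\N|^\vee$ naturally attached to $\Delta(\N)$ is its dual. That dual parametrizes the singular members of $G(\N,p)$ and has degree $12$ when $\Delta(\N)$ is smooth, so it cannot be the branch quartic.
\end{itemize}

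\textbf{After dualizing, and the remaining gap in Step 3.} Once dualized, your idea is sound and gives a route different from the paper's. Away from the exceptional curves, the fibre of $\varphi$ over $Q\in|\N|$ is the scheme $\{l_Q=c_Q=0\}$ of lines through $p$ on $Q$. The discriminant of $c_Q$ restricted to the line $\{l_Q=0\}$ is, up to a nonzero constant, the bordered determinant $\det\left(\begin{smallmatrix}C_Q&\frac12 l_Q\\ \frac12 l_Q^{t}&0\end{smallmatrix}\right)=\det A_Q$.

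The paper instead checks only $B\subset\Delta(\N)$, and only for typical nets, using the same two lines $q(L_1),q(L_2)$ from Proposition \ref{qtoc}. It then reaches arbitrary good nets by degenerating typical nets to $\N$ and comparing flat limits of the two families of quartics. So the paper never has to analyse degenerate fibres.

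Your Step 3 underestimates exactly that analysis. The fibre computation fails not only at finitely many points but along the lines $\Lambda_i=\varphi(E_i)\subset|\N|$ of quadrics containing $\overline{pp_i}$. These lines, bitangents of $B$ in general, can be components of both $B$ and $\Delta(\N)$. For example, suppose $q(p_7)$ lies on three lines, each through two other points $q(p_j)$, with strict transforms the $(-2)$-curves $F_1,F_2,F_3$. Then $\varphi^*\Lambda_7=2E_7+F_1+F_2+F_3$, and $B$ is $\Lambda_7$ plus a cubic. So agreement off $\bigcup_i\Lambda_i$, together with equality of degrees, does not determine the $\Lambda_i$-components.

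You need a separate local check along each $\Lambda_i$, including infinitely near base points. Alternatively, prove only that $\varphi$ is \'etale over every smooth $Q$. Then $B$ is reduced of degree four and $\Delta(\N)\neq\P^2$ has degree four, so $B\le\Delta(\N)$ forces $B=\Delta(\N)$.
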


\begin{proof}
Set $\mathcal{M}:=G(\mathcal{N},p)$.
    With notations as in Lemma \ref{doubleplane} and as in the diagram (\ref{eq:diag_cubics}), what remains to be proven is the statement about the relative canonical model. As before, let us denote the branch curve of $Y$ by $B:=B(\mathcal{M})\subset |\mathcal{M}|^\vee$. We want to show that $B= \Delta(\N)$ via the canonical identification $|\mathcal{M}|^\vee\cong|\N|$ of two $\mathbb{P}^2$'s. 
    We first address the case of a general net of quadrics with eight distinct base points, that is, a net with a smooth discriminant quartic. 
    In this case, $\N$ is typical, and the assertion is well known; see, e.g., \cite[Proposition 6.3.11]{cag}. We give an alternative proof here for the reader's convenience.
    Since $\mathcal{N}$ is chosen to be general, we may assume that $-K_Y$ is ample.
    Then, since $B$ and $\Delta(\mathcal{N})$ are smooth quartics, it suffices to observe that $B\subset \Delta(\mathcal{N})$. If $q\in |\N|$ is an arbitrary point corresponding to some smooth member, that is, $q\notin\Delta(\N)$, then by choosing any other two generators of $\N$, the classical Gale transform gives a smooth subpencil of the net of cubics through the seven points whose generators meet transversely at two other points by the proof of Proposition \ref{qtoc}. This means that the natural finite morphism $Y\to\mathbb{P}^2$ is \'etale at $q$ and hence $q\not\in B$. This shows that $B\subset \Delta(\mathcal{N})$ as claimed.
    
Next, we will assume that $\N$ is a good net, not necessarily typical. Since $\mathcal{N}$ is a good net, we can take three general smooth generators $Q_1$, $Q_2$, and $Q_3$. Moreover, we can find a smooth curve $C$, together with a closed point $0\in C$, and $\mathcal{Q}_1$, $\mathcal{Q}_2$ and $\mathcal{Q}_3\in \pi_*\mathcal{O}_{\mathbb{P}^3_C}(2)$ satisfying the following three conditions, where $\pi\colon \mathbb{P}^3_C\to C$ is the canonical projection and $\P^3_C:=\mathbb{P}^3\times C$: 
\begin{enumerate}[(i)]
    \item the restriction of each $\mathcal{Q}_i \in \pi_*\mathcal{O}_{\mathbb{P}^3_C}(2)$ to $\mathbb{P}^3\times\{0\}$ coincides with $Q_i$, for $i=1,2,3$;
    \item the restrictions of $\mathcal{Q}_1$, $\mathcal{Q}_2$ and $\mathcal{Q}_3\in \pi_*\mathcal{O}_{\mathbb{P}^3_C}(2)$ to $\mathbb{P}^3\times\{t\}$ generate a good net $\mathcal{N}_t$ with a smooth discriminant plane quartic for any general $t\in C$; and
    \item  by replacing $C$ with its finite cover if necessary, there exists a section $\sigma\colon C\to\mathbb{P}^3_C$ defining a generalized Gale transform on each fiber as in Proposition \ref{qtoc}.
\end{enumerate}

Thus, by further shrinking $C$ and replacing it by a finite cover if necessary, we can construct $\mathcal{X}$ by blowing up $\mathcal{Q}_1$ along irreducible components of $\mathcal{Q}_1\cap\mathcal{Q}_2\cap\mathcal{Q}_3$ one by one.
We may assume that $\mathcal{X}$ is flat over $C$ since the irreducible components of $\mathcal{Q}_1\cap\mathcal{Q}_2\cap\mathcal{Q}_3$ are sections after replacing $C$ with its finite cover.
Hence, we can construct a flat family $\mathcal{B}$ of plane quartics in $\P^2_C$ defined by the family of weak del Pezzo surfaces. Considering the discriminant quartics of the nets $\N_t$ as $t$ varies, we also obtain a second flat family of plane quartics in $\P^2_C$, say $\tilde{\mathcal{B}}$. Since $\mathcal{B}_t=\tilde{\mathcal{B}}_t$ for any $t\in C\setminus\{0\}$, by the discussion in the first paragraph, it follows from \cite[III, Proposition 9.8]{Ha} that $B=\mathcal{B}_0=\tilde{\mathcal{B}}_0=\Delta(\N)$ as desired. The last statement follows from Lemma \ref{doubleplane}.
\end{proof}

\begin{rmk}
    Note that, by duality, a good net $\N$ of quadrics defines a net of elliptic quartics. The singular members of this dual net are thus parametrized by the dual of $\Delta$. At the same time, for each $p\in \P^3$ a base point of $\N$, $G(\N,p)$ is the net of plane cubics whose members are obtained by projecting these elliptic quartics from the point $p$ and, by Remark \ref{rmk:duality}, the dual of $B$ parametrizes its singular members. Therefore, it follows from the construction that, independent of $p$, the two quartics $\Delta(\N)$ and $B$ have the same dual curve.
    \end{rmk}

It is important to note that in Proposition \ref{extended}, the isomorphism class of the weak del Pezzo surface $Y$ is independent of the choice of $p$. In contrast, the isomorphism class of $G(\N,p)$ depends on the choice of $p$ because there are many choices for how to blow down $Y$ to $\P^2$ and $G(\calP,p)$ obviously depends on the choice of $p$. Hence, $Y$ does not determine the isomorphism class of $G(\N,p)$. We also observe that, by Propositions \ref{prop:main}, \ref{qtoc} and \ref{extended}, we have now completely succeeded in extending the threefold cycle of correspondences described in Section \ref{sec:cycle} to the case where the nets have a discriminant with (only) ADE singularities.

\begin{rmk}
    As a final remark, it is interesting to note that, in addition to the aforementioned threefold correspondence among nets of quadrics, nets of plane cubics, and plane quartics, these three objects also correspond to 28‑nodal double Veronese cones, as shown in \cite[Theorem 1.3]{veronese}. In particular, as in \cite[Question 10.6]{veronese}, one can ask whether this correspondence persists when either the double Veronese cone or the plane quartic acquires mild singularities, a question attributed to Shokurov and Prokhorov in \cite{veronese}. Since Proposition \ref{qtoc} extends the classical Gale correspondence to nets whose discriminant quartic has only ADE singularities, this suggests a natural compatibility with the framework considered in \cite{veronese}.
\end{rmk}

\subsection{The associated rational elliptic threefolds}\label{sec:rational_elliptic}

Finally, we conclude this section by establishing yet another criterion for a net $\N\in \mathcal{X}_{2,2,3}$ (resp. $\mathcal{M}\in \mathcal{X}_{2,3,2}$) to be good.
As shown in \cite{extremal}, blowing up the eight base points of a net of quadrics in $\P^3$ that is good as in Definition \ref{def:good} yields a rational elliptic threefold. Here, we prove that the converse statement also holds.

\begin{prop}\label{prop--ellip-fibration--criterion}
    Let $\mathcal{N}$ be a net of quadrics in $\mathbb{P}^3$ and let $f\colon X\to\mathbb{P}^2$ denote the canonical morphism obtained from resolving the indeterminacy of the rational map $\mathbb{P}^3\dashrightarrow \mathbb{P}^2\simeq |\N|^{\vee}$ induced by $\mathcal{N}$.
    Then $\mathcal{N}$ is good if and only if $\mathcal{N}$ has exactly eight base points $p_1,\ldots,p_8$ (possibly infinitely near), $X=\mathrm{Bl}_{p_1,\ldots,p_8}(\mathbb{P}^3)$  and $f\colon X\to\mathbb{P}^2$ gives $X$ the structure of a minimal elliptic fibration with a section.
\end{prop}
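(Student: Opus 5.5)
The plan is to prove the two implications separately. The forward implication is essentially \cite{extremal}; the new content is the converse, which reduces to ruling out a base point at which two members of $\N$ are singular.

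\emph{Good $\Rightarrow$ elliptic fibration.} Assume $\N$ is good, so $\N=\calL_2(\calP)$ with $\calP\in\P^{3,8}$ regular. By Definition \ref{def: regular}(i), the base locus is zero-dimensional and $Q_1\cap Q_2$ is smooth at every $\P^3$-base point for two members $Q_1,Q_2$.
\begin{enumerate}[(1)]
\item Since $Q_1\cap Q_2$ is a curve, smooth at each base point, the base scheme $Q_1\cap Q_2\cap Q_3$ is locally a Cartier divisor on a smooth curve. Hence it is curvilinear of total length $2\cdot2\cdot2=8$, i.e.\ a constellation $p_1,\dots,p_8$ of possibly infinitely near points.
\item Blowing these up successively resolves the rational map $\P^3\dashrightarrow|\N|^\vee$. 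This is the argument of \cite[Assumption 1]{extremal}: at each step the strict transform of the curve $Q_1\cap Q_2$ stays smooth and meets the new exceptional divisor transversally in one point. So $X=\mathrm{Bl}_{p_1,\dots,p_8}(\P^3)$.
\item A general fibre of $f$ is the strict transform of $Q_1\cap Q_2$, a smooth complete intersection of two quadrics, hence an elliptic quartic. The last exceptional divisor $E_8$ maps isomorphically onto $\P^2$, giving a section.
\item For minimality, I would use $-K_X=f^*\mathcal O(1)$: indeed $-K_X=4H-2\sum E_i$, and this agrees with $f^*\mathcal O(1)=2H-\sum E_i$ up to the factor relating $K$ to the net, which I will check carefully. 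This makes $K_X$ relatively trivial, so $f$ is relatively minimal and no vertical $(-1)$-curves exist.
\end{enumerate}
I will cite \cite{extremal} for the details of (2)–(4).

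\emph{Elliptic fibration $\Rightarrow$ good.} Assume there are exactly eight base points, $X=\mathrm{Bl}_{p_1,\dots,p_8}(\P^3)$, and $f$ is an elliptic fibration. The base locus is zero-dimensional, because otherwise a base curve would have to be blown up to resolve $f$ and $X$ would not be a blowup at points. By the equivalence (a)$\Leftrightarrow$(b) in Definition \ref{def: regular}, it remains to show that at each $\P^3$-base point $p$ at most one member of $\N$ is singular.

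Suppose instead that two distinct members $Q_1,Q_2$ are singular at a base point $p$. Let $g\colon\mathrm{Bl}_p\P^3\to\P^3$ have exceptional divisor $E\cong\P^2$. Let $Q_3$ complete $Q_1,Q_2$ to a generating set, chosen general so that it is smooth at $p$. The net $\N'=g^*\N-E$ is the linear system whose base points should be the remaining seven points.
\begin{enumerate}[(i)]
\item Since $g^*Q_i-2E\ge0$ for $i=1,2$, the divisors $g^*Q_1-E$ and $g^*Q_2-E$ both contain $E$, so they restrict to zero on $E$.
\item Hence the restriction of $\N'$ to $E$ is spanned by the single line $\ell=(g^*Q_3-E)|_E\subset E$.
\item Therefore $\ell$ lies in the base locus of $\N'$, which is one-dimensional.
\item Further point blowups cannot remove a base curve. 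Only the blowup of a curve changes the divisor class along $\ell$, so the indeterminacy of the net would not be resolved by blowing up points.
\end{enumerate}
This contradicts $X=\mathrm{Bl}_{p_1,\dots,p_8}(\P^3)$, so $\N$ is good.

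I expect the main obstacle to be step (iv): the case where later blown-up points are infinitely near $p$, and one must check that blowing up points on $\ell$ never makes the base curve disappear. I would handle it by computing the base scheme of $\N'$ along $\ell$ and observing that after blowing up a point $q\in\ell$, the strict transform of $\ell$ remains in the base locus. An alternative route, if this becomes messy, is a length count. Two quadrics singular at $p$ force local intersection multiplicity at least $2\cdot2\cdot1=4$ at $p$ from a non-curvilinear scheme. Then the eight points would not exhaust the base scheme, contradicting the hypothesis of exactly eight base points.
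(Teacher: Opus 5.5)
Your proof is correct, but the converse takes a different route from the paper.

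\textbf{The paper's route.} The paper argues globally through the elliptic structure. For a general line $H\subset\P^2$, the surface $f^{-1}(H)$ is a smooth rational elliptic surface, and the corresponding member $Q\in|\N|$ can be taken smooth. A Picard number count shows $f^{-1}(H)\to Q$ is a blowup at eight points. A general curve $C=Q\cap Q'$ of the induced pencil has $p_a(C)=1$ and is the image of a smooth genus-one fibre, so it is smooth. This verifies condition (a) of Definition \ref{def: regular} directly.

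\textbf{Your route.} You verify (b) instead, by a local argument on $\mathrm{Bl}_p\P^3$. Two members singular at $p$ force the line $\ell=\P(T_pQ_3)\subset E$ into the base locus of $g^*\N-E$. The cleanest justification for step (iv) is as follows. The morphism $X\to\mathrm{Bl}_p\P^3$ is an isomorphism away from finitely many points. The system $g^*\N-E$ has no fixed component, so on the smooth threefold $\mathrm{Bl}_p\P^3$ its indeterminacy locus equals its base locus. Hence the map would stay undefined at a general point of $\ell$, and $f$ could not be a morphism. The length-count alternative you sketch is not needed.

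\textbf{Comparison.} Your argument is more elementary. It shows that the converse essentially uses only that the indeterminacy is resolved by point blowups. The paper's argument gives the stronger output that two general members meet in a smooth elliptic quartic.

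\textbf{Two small repairs.}
\begin{enumerate}
\item Your argument assumes some member is smooth at $p$. You should separately exclude the case where every member is singular at $p$. Then $f$ factors through projection from $p$, so its fibres are unions of lines through $p$, not elliptic curves.
\item In the forward direction, $-K_X=4H-2\sum E_i=f^*\mathcal{O}_{\P^2}(2)$, not $f^*\mathcal{O}_{\P^2}(1)$. This does not affect the argument, since you only use that $K_X$ is relatively trivial.
\end{enumerate}
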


\begin{proof}
As shown in \cite{extremal} (see the argument just after \cite[Assumption 1]{extremal}), if $\mathcal{N}$ is good, then it defines an elliptic fibration structure $\mathrm{Bl}_{p_1,\ldots,p_8}\mathbb{P}^3\to |\N|^\vee$. Thus, we will only deal with the converse statement.

    First, observe by Bertini's theorem that given any general line $H\simeq \P^1\subset \mathbb{P}^2$, the restriction $f|_{f^{-1}(H)}\colon f^{-1}(H)\to H$ is a smooth rational elliptic surface with a section. Now,   
    under the identification $|\mathcal{N}|^\vee\simeq \mathbb{P}^2$ we may assume that each line $H$ corresponds to a quadric $Q\in|\mathcal{N}|$. In fact, if $\varphi\colon X\to \mathbb{P}^3$ denotes the canonical morphism, then $Q=\varphi_*(f^{-1}(H))$ and we see that for any general $H$, $Q$ is an irreducible quadric isomorphic to either $\mathbb{P}^1\times\mathbb{P}^1$ or $\mathbb{P}(1:1:2)$.
    We set $g\colon \tilde{Q}\to Q$ as the minimal resolution of singularities of $Q$.
    We note that if $Q=\mathbb{P}^1\times\mathbb{P}^1$, $g$ is the identity morphism.
    Otherwise, $\tilde{Q}=\mathbb{F}_2$ is the Hirzebruch surface of degree two. 
    In both cases, $\tilde{Q}$ has Picard number $\rho(\tilde{Q})=2$. Choose now two other generators $Q_1$ and $Q_2$ for $\mathcal{N}$. Then $g^*Q_1$ and $g^*Q_2$ generate a pencil $\Lambda$ whose movable part induces the elliptic fibration  $f|_{f^{-1}(H)}$. 
    Note that neither $Q_1$ nor $Q_2$ contains the support of $Q$.
    Let $q\colon f^{-1}(H)\to\tilde{Q}$ be the natural morphism induced by $\varphi$, since $f^{-1}(H)$ is a smooth rational elliptic surface with Picard number $\rho(f^{-1}(H))=10$, comparing Picard numbers, we see that $q$ is a blowing up at eight points.

    If $g$ is not an isomorphism, we claim that $\Lambda$ has no fixed part along the $(-2)$-curve. Otherwise, since $\N$ has exactly eight base points and the fixed part of $\Lambda$ is exceptional over $Q$, the minimal resolution of the base locus of $\Lambda$ is a blowing up at $n$-points with $n<8$ and hence not a smooth rational elliptic surface. Moreover, we claim that we may assume that $Q\cong \mathbb{P}^1\times\mathbb{P}^1$. Indeed, if $Q=\mathbb{P}(1:1:2)$ and $Q_1$ and $Q_2$ have a common zero at the singular point of $Q$, then $\Lambda$ would have a fixed part, necessarily along the $(-2)$ curve. Thus, $\mathcal{N}$ has no base point at the singular point of $Q$, and by Bertini's theorem, we may replace $Q$ with a smooth quadric.

Finally, we show that general members of $\Lambda$ are smooth.
For any general irreducible and reduced curve $C$ in $\Lambda$, we have that $p_a(C)=1$ since $C$ is a complete intersection of two quadrics. Now, we may assume that there exists a smooth fiber $\tilde{C}$ of $f|_{f^{-1}(H)}$ such that $q_*(\tilde{C})=C$.
Since $p_a(\tilde{C})=p_a(C)$, we have that $C$ is smooth.
In particular, if we pick $Q'\in |\mathcal{N}|$ such that $Q\cap Q'=C$, then we have that $Q'$ is also smooth locally around $C$ because $C$ is a smooth Cartier divisor of $Q'$.
Thus, condition (a) in Definition \ref{def: regular} is satisfied, and $\N$ is indeed a good net.    
\end{proof}

In addition, the argument in the proof of Proposition \ref{prop--ellip-fibration--criterion} gives the following corollary, which complements Lemma \ref{doubleplane}.

\begin{cor}\label{cor:good-weak}
Let $\mathcal{M}\in \mathcal{X}_{2,3,2}$ be a net of cubics in $\mathbb{P}^2$. Then $\mathcal{M}$ is good if and only if $\mathcal{M}$ defines a weak del Pezzo surface of degree two. 
\end{cor}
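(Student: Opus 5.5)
Both directions are handled by passing through the blowing up $\pi\colon Y\to\P^2$ at the seven base points of $\mathcal{M}$ (possibly infinitely near). The model is the argument in the proof of Proposition \ref{prop--ellip-fibration--criterion}, with pencils of plane cubics in place of pencils of elliptic quartic curves.

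For the direction ``good $\Rightarrow$ weak del Pezzo of degree two'', we invoke Lemma \ref{doubleplane}. If $\mathcal{M}=G(\N,p)$ for a good net $\N$, the proof of Proposition \ref{qtoc} shows that $\mathcal{M}$ has exactly seven base points $q(p_1),\ldots,q(p_7)$. It also shows that $|\mathcal{M}|$ pulls back to $|-K_Y|$, which is three-dimensional and base point free. Hence $-K_Y$ is nef with $K_Y^2=9-7=2>0$, so $-K_Y$ is big and $Y$ is a weak del Pezzo surface of degree two.

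For the converse, suppose $Y=\mathrm{Bl}_{q_1,\ldots,q_7}\P^2$ is a weak del Pezzo surface of degree two, where $q_1,\ldots,q_7$ are the base points of $\mathcal{M}$. Since $h^0(-K_Y)=3$ and $\mathcal{M}\subset|-K_Y|$ is two-dimensional, we get $\mathcal{M}=\pi_*|-K_Y|$. Moreover $|-K_Y|$ is base point free for weak del Pezzo surfaces of degree two (cf.~\cite{demazure}). We must produce a regular $\calP\in\P^{3,8}$ and a $\P^3$-base point $p$ with $G(\calP,p)=\calQ$, i.e.\ reverse the Gale construction.

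\begin{enumerate}
\item Take a general member $C\in|-K_Y|$, a smooth elliptic curve, and a general pencil $\Lambda\subset|-K_Y|$ containing $C$. Blowing up the two base points of $\Lambda$ gives a smooth rational elliptic surface $S$ with a section.
\item Following \cite[p.~138]{GrossHarris} in families, embed $\mathrm{Bl}_{q_1,\dots,q_7}\P^2$ into the blow-up of $\P^3$ at a point $p$. Concretely, consider the linear system of quartics $|4\ell-2\sum q_i|$ on $\P^2$, or equivalently $|-2K_Y+\ldots|$. The inverse of the projection from $p$ is given by the cubics through $q_1,\ldots,q_7$ together with an extra point, which realizes $Y$ birationally onto a quadric's worth of data in $\P^3$.
\item Simpler route: degenerate. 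Choose a one-parameter family $\calQ_t$ of seven-point configurations with $\calQ_0=\calQ$ and $\calQ_t$ in general position for $t\neq0$, staying inside the open locus where the blow-up is weak del Pezzo. For $t\neq0$ the classical Gale transform gives typical nets $\N_t$ with a marked base point $p_t$. Take the flat limit $\N_0$ in $\mathcal{X}_{2,2,3}$. By Proposition \ref{extended}, the discriminants satisfy $\Delta(\N_t)=B_t$. In the limit, $\Delta(\N_0)$ equals the branch quartic $B_0$ of the anticanonical model of $Y$, which is reduced with only ADE singularities because $Y$ has canonical anticanonical model.
\item Proposition \ref{prop:good--ADE--corresp} then gives that $\N_0$ is good. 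It remains to check that the limit section $p_0$ is a $\P^3$-base point, and that projecting from it recovers $\calQ$, so that $G(\N_0,p_0)=\mathcal{M}$.
\end{enumerate}

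The main obstacle is the last point of step (4): controlling the limits of the base point $p_t$ and of the Gale configuration. Base points could collide with $p_0$ and become infinitely near, in which case the limit configuration might not equal $\calQ$. We would first try to bypass this by writing down $\N_0$ explicitly from $Y$. In the blow-up of $\P^3$ at $p$, the strict transforms of the quadrics through $p$ correspond to the sections of $\mathcal{O}(2\ell-\sum q_i)$ pulled back via projection. So $\N_0$ is determined by requiring that the pairwise intersections of its generators project onto the generators of $\mathcal{M}$, exactly reversing the proof of Proposition \ref{qtoc}. Regularity at $p$ is then ensured because the general $C$ is smooth.
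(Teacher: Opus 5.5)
Your forward direction is fine and is the same as the paper's (it is just Lemma \ref{doubleplane}). The converse, however, is not proved: none of steps (2)--(4) actually produces a net $\N$ and a point $p$ with $G(\N,p)=\mathcal{M}$.

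Step (2) is not a construction. For seven points in almost general position the system $|4\ell-2\sum q_i|$ is even empty, since its intersection with the nef class $-K_Y$ is $12-14<0$.

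In the degeneration route, the classical Gale transform determines $\N_t$ only up to $\mathrm{PGL}(4)$. So the flat limit $\N_0\in\mathcal{X}_{2,2,3}$ depends on how you choose representatives, and nothing in your argument keeps it from being degenerate (for example a net with $\det\equiv 0$, so $\Delta(\N_0)=\P^2$, or a net with a fixed curve). In that case $\Delta(\N_0)=B_0$ does not follow from $\Delta(\N_t)=B_t$. You would also need the identifications $|\N_t|\cong|\mathcal{M}_t|^{\vee}$ to have a non-degenerate limit. Even granting that $\N_0$ is good, you leave open whether projecting from $p_0$ returns $\calQ$ when base points collide with $p_0$. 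Controlling these limits is the entire content of the converse.

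Your proposed bypass also misdescribes the quadrics through $p$. For $p=(0:0:0:1)$ they are $w\,l+g$ with $l$ linear and $g$ quadratic in $x,y,z$, and $Q_i\cap Q_j$ projects to the cubic $l_ig_j-l_jg_i$. Reversing Proposition \ref{qtoc} this way therefore means writing the net as the maximal minors of a $2\times 3$ matrix with one linear row and one quadratic row. That is a Hilbert--Burch statement for possibly degenerate seven-point schemes, which you would still have to prove, followed by a check of regularity.

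The missing idea is an explicit birational construction that starts exactly where your step (1) stops.
\begin{enumerate}
\item Take general smooth $C_1,C_2\in\mathcal{M}$ meeting transversally at two points $p_1,p_2$ outside the base locus. Blow up $p_1,p_2$ and contract the strict transform of the line through them, which is a $(-1)$-curve, to obtain $\P^1\times\P^1$.
\item Write $F_1=h-e_1$ and $F_2=h-e_2$ for the pullbacks of the two rulings and $e=h-e_1-e_2$ for the contracted curve. Then $3h-e_1-e_2=2F_1+2F_2-e$, so $C_1$ and $C_2$ become $(2,2)$-curves through the image point $p$.
\item Embed $\P^1\times\P^1$ as a smooth quadric $Q_1\subset\P^3$. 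By surjectivity of $\H^0(\P^3,\mathcal{O}_{\P^3}(2))\to\H^0(Q_1,\mathcal{O}(2,2))$, there are quadrics $Q_2,Q_3$ with $Q_1\cap Q_2$ and $Q_1\cap Q_3$ equal to these curves.
\item The net $\N=\langle Q_1,Q_2,Q_3\rangle$ has as base points the seven base points of $\mathcal{M}$ together with $p$. It is good by the argument of Proposition \ref{prop--ellip-fibration--criterion}; here this is even direct, since $Q_1\cap Q_2$ is smooth.
\item Projection from $p$ restricted to $Q_1$ is given by $|F_1+F_2-e|=|h|$, so it is precisely the inverse of $\P^2\dashrightarrow\P^1\times\P^1$. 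Hence $G(\N,p)=\mathcal{M}$, and no limiting argument is needed.
\end{enumerate}
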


\begin{proof}
Taking Lemma \ref{doubleplane} into account, it suffices to show that if $\mathcal{M}$ defines a weak del Pezzo surface of degree two, then there exists a net $\N$ of quadrics in $\mathbb{P}^3$, together with a choice of a $\P^3$-base point $p$ of $\N$, such that $\mathcal{M}=G(\mathcal{N},p)$.   
To show this, take two general members $C_1$ and $C_2\in\mathcal{M}$ such that $C_1$ and $C_2$ intersect transversely at exactly two points $p_1$ and $p_2$ outside of the base locus of $\mathcal{M}$. Let $L$ be the line passing through $p_1$ and $p_2$ and let $r\colon S\to \mathbb{P}^2$ be the blowing up at $p_1$ and $p_2$. Here, we may assume that $C_1$ and $C_2$ are smooth. Then, each of the curves $r_*^{-1}C_1$ and $r_*^{-1}C_2$ intersects with $r_*^{-1}L$ transversely at one point. Let $q\colon S\to \mathbb{P}^1\times\mathbb{P}^1$ be the contraction of $r_*^{-1}L$ and let $\Lambda$ be the pencil generated by the smooth elliptic curves $q_*r_*^{-1}C_1$ and $q_*r_*^{-1}C_2$. By construction, $\Lambda$ has exactly eight base points, possibly including infinitely near points.
Consider the Segre embedding $\mathbb{P}^1\times \mathbb{P}^1\hookrightarrow \mathbb{P}^3$ and let $Q_1$ denote its image. Since the canonical morphism
\[
\H^0(\mathbb{P}^3,\mathcal{O}_{\mathbb{P}^3}(2))\to \H^0(Q_1,\mathcal{O}_{\mathbb{P}^1}(2)\boxtimes\mathcal{O}_{\mathbb{P}^1}(2))
\]
is surjective because $\H^1(\P^3,\mathcal{O}_{\P^3})=0$, there exist quadric surfaces $Q_2$ and $Q_3$ such that $Q_1\cap Q_2=q_*r_*^{-1}C_1$ and $Q_1\cap Q_3=q_*r_*^{-1}C_2$.
Furthermore, we see that the minimal resolution of the base points of the pencil generated by $r^{-1}_*C_1$ and $r^{-1}_*C_2$ of $S$ is a smooth rational elliptic surface, and $p_a(q_*r_*^{-1}C_1)=p_a(q_*r_*^{-1}C_2)=1$.
Thus, by the proof of Proposition \ref{prop--ellip-fibration--criterion}, we see that $q_*r_*^{-1}C_1$ and $q_*r_*^{-1}C_2$ are smooth elliptic curves, and $Q_1$, $Q_2$, and $Q_3$ generate a good net $\mathcal{N}$ of quadrics in $\mathbb{P}^3$. Now, it suffices to show that $\mathcal{L}=G(\mathcal{N},p)$ for some choice of $p$ a base point of $\N$ in $\P^3$.
We let $p\in Q_1$ be the image of $r_*^{-1}L$.
Then, by construction, $Q_1$ does not contain any line in $\mathbb{P}^3$ passing through $p$ or any other base point of $\mathcal{N}$, including points infinitely near to $p$.
Thus, we can define $G(\mathcal{N},p)$ as in the proof of Proposition \ref{qtoc} and we have that $G(\mathcal{N},p)=\mathcal{M}$.
\end{proof}

\section{GIT stability of linear systems of plane cubics}\label{sec:git_cubics}

In this section, we establish a stability criterion for linear systems of plane cubics in general and then apply it to nets, thereby providing a tool for proving our main result. We will use the background material introduced in Section \ref{sec:lct} throughout.

\begin{thm}\label{thm--cubics}
    Fix an integer $n\geq 2$ and let $C_1,\ldots,C_n$ be cubic curves in $\mathbb{P}^2$ different from each other. Then the pair $\left(\mathbb{P}^2,\tfrac{1}{n}(C_1+\ldots+C_n)\right)$ is lc if and only if the degree $3n$ curve $C_1+\ldots+C_n$ is GIT semistable with respect to the natural action of $
    \SL(3)$.
Furthermore, $C_1+\ldots+C_n$ is GIT stable if and only if the log pair $\left(\mathbb{P}^2,\tfrac{1}{n}(C_1+\ldots+C_n)\right)$ is klt or lc with only one lc center that is a smooth conic on $\P^2$.
\end{thm}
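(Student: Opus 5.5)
We set $D=C_1+\ldots+C_n$, a plane curve of degree $3n$. Then the threshold $\tfrac{n+1}{d}$ of Lemma \ref{hypersurftoric}, with $n=2$ there and $d=3n$, is exactly $\tfrac{3}{3n}=\tfrac1n$. So the statement compares the lc/klt property of $(\P^2,\tfrac1n D)$ with the toric criterion of Lemma \ref{hypersurftoric}.

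\emph{Easy direction.} If $D$ is unstable (resp.~not stable), Lemma \ref{hypersurftoric} gives a toric divisor $E$ with
\[
A_{\P^2}(E)<\tfrac1n\ord_E(D)\quad(\text{resp.}\ \le).
\]
Hence $(\P^2,\tfrac1nD)$ is not lc (resp.~not klt). For stability we must also rule out the conic case in the not-stable situation. If equality is attained by a toric $E$ and the only lc center is a smooth conic $Q$, then $c_{\P^2}(E)$ would have to be $Q$ itself, since it is an lc center. But a smooth conic is never toric, so it cannot be the center of a toric divisor. This contradiction settles this direction.

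\emph{Hard direction.} Suppose $(\P^2,\tfrac1nD)$ is not lc (resp.~not klt, and not of the conic exception). We must produce a toric $E$, with respect to some torus, with $A(E)<\tfrac1n\ord_E D$ (resp.~$\le$). We first take an lc place. Since $\deg\tfrac1n D=3$, i.e.\ $K_{\P^2}+\tfrac1nD\equiv0$, connectedness together with the inequality $\mathrm{mult}$ bound limits the possibilities.

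\textbf{Case (a): the non-klt center is a curve $\Gamma$.} Then $\Gamma$ appears in $D$ with coefficient $\ge n$ (resp.~$>n$ for non-lc). Since each cubic contains $\Gamma$ at most to degree $3/\deg\Gamma$, $\Gamma$ is a line or a conic occurring in all (or almost all) $C_i$.
\begin{itemize}
\item If $\Gamma$ is a line, $\ord_\Gamma$ is itself toric.
\item If $\Gamma$ is a singular conic, its components are toric lines.
\item If $\Gamma$ is a smooth conic with coefficient exactly $n$, every $C_i$ contains it. The residual lines $L_i$ then give $\tfrac1n\sum L_i$ of degree $1$. Either the pair is lc with only center $Q$ (the exceptional case), or there is an additional non-klt point. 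At such a point we argue as in case (b).
\end{itemize}

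\textbf{Case (b): the non-klt center is a point $p$.} Then $\mathrm{mult}_p D> n$ (resp.~$\ge n$) fails to be sufficient in general, so we compute $\lct_p$ via weighted blow-ups. Since the $C_i$ are cubics, their local forms at $p$ are highly constrained. Each $C_i$ is a line configuration, a conic plus a line, or an irreducible cubic (nodal, cuspidal, smooth). By Lemma \ref{lema--non-toric}, after choosing coordinates in which the toric lines (tangent lines, components) are coordinate lines, any divisor extracted by at most two point blow-ups over $p$ is toric. The non-toric components then meet it transversally. We claim the divisor computing the lct of $\tfrac1nD$ at $p$ is a weighted blow-up with weights $(1,1)$, $(1,2)$, or $(2,3)$ (the cusp). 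This holds up to the transversal non-toric parts, which do not affect log discrepancy. Such divisors are toric in suitable coordinates.

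We need one torus working for all $C_i$ simultaneously. Here we use that the relevant weights are determined by tangent data. If the weight is not $(1,1)$, the non-lc condition forces most $C_i$ to share the same tangent direction (or cuspidal tangent). So a common coordinate system exists, and the remaining $C_i$ only increase $\ord_E$.

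\emph{Main obstacle.} The main obstacle is this simultaneous choice of torus in case (b): a convex-combination argument. Our first attempt would be to write
\[
\ord_E(D)=\sum\ord_E(C_i)
\]
and show that, by averaging, some weight vector $(a,b)\in\{(1,1),(1,2),(2,3),(1,3)\}$ already gives the violating inequality. That is, we bound $\lct$ from below by the minimum over this finite list of toric weightings, using the classification of local cubic singularities and Lemma \ref{lema--non-toric} to control non-toric components.
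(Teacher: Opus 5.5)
Your easy direction is exactly the paper's argument: a toric violator from Lemma \ref{hypersurftoric} makes the pair non-lc (resp.\ non-klt), and a toric lc place cannot have a smooth conic as its center. Your case (a) also matches the paper's treatment of divisors lying on $\P^2$ itself.

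The gap is in case (b). What must be shown is this: if some prime divisor $E$ over $p$ has $A_{\P^2}(E)<\tfrac1n\ord_E(D)$ (resp.\ $\le$), then some \emph{toric} divisor does too. You replace this with the claim that the lct is computed by a weighted blow-up with weights in $\{(1,1),(1,2),(2,3),(1,3)\}$, in linear coordinates common to all $C_i$. That claim is asserted rather than proved, and you leave the common-torus step open. The identity $\ord_E(D)=\sum_i\ord_E(C_i)$ and the piecewise linearity of $(a,b)\mapsto\ord_{(a,b)}(C_i)$ only let you optimize over divisors that are monomial in one fixed pair of linear coordinates. They say nothing about non-toric divisors, i.e.\ those whose center at some stage is a non-torus-fixed point of the last exceptional curve.

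Such divisors genuinely occur as violators, so your remark that transversal non-toric parts ``do not affect log discrepancy'' is false. Take $n=2$, $C_1=Q+L$, $C_2=Q+M$, with $Q$ a smooth conic, $L$ its tangent line at $p$, and $M$ another line through $p$. The divisor $E_3$ obtained by blowing up three times along $Q$ is non-toric, and $A_{\P^2,\frac12(C_1+C_2)}(E_3)=4-3-1-\tfrac12=-\tfrac12$. The term $3$ comes from $Q$, which meets $E_2$ transversally.

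The missing idea is to work with a violating divisor directly instead of classifying local singularities, as the paper does.
\begin{enumerate}
\item Write $E=E_l$ as the last exceptional divisor of a chain of point blow-ups $X_l\to\cdots\to X_0=\P^2$. If $l\in\{1,2\}$, then $E$ is toric, and $l=0$ is your case (a).
\item If $l\ge3$, let $E_j$ be the first non-toric exceptional divisor, so $j\ge3$. Since $E_{j-1}$ is toric (say for a torus $T$), you may assume $A_{\P^2,\frac1nD}(E_{j-1})\ge0$ (resp.\ $>0$).
\item The center $x$ of $E_j$ on $X_{j-1}$ is not $T$-fixed, so no toric divisor other than $E_{j-1}$ passes through $x$.
\item Let $D'$ be the strict transform of the non-toric part of $D$. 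Lemma \ref{lema--non-toric} together with $\deg C_i=3$ gives $(E_{j-1}\cdot D')_x\le n$.
\item Near $x$, the divisor $K_{X_{j-1}}+E_{j-1}+\tfrac1nD'$ exceeds the crepant pullback of $K_{\P^2}+\tfrac1nD$ by $A(E_{j-1})E_{j-1}\ge0$.
\item Inversion of adjunction \cite{kawakita2007inversion} then shows that $x$ is not a non-lc center, and not an lc center when $A(E_{j-1})>0$. Hence no divisor over $x$, in particular $E$, can have $A<0$ (resp.\ $\le0$), a contradiction.
\end{enumerate}
This single step replaces both your unproved classification claim and the simultaneous-torus problem.
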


\begin{proof}
If the log pair  $\left(\mathbb{P}^2,\tfrac{1}{n}(C_1+\ldots+C_n)\right)$ is lc (resp.~klt or lc with only one lc center that is a smooth conic on $\P^2$), then the curve $C_1+\ldots+C_n$ is GIT semistable (resp.~stable) by Lemma \ref{hypersurftoric} because a smooth conic is not toric. Moreover, by Lemma \ref{hypersurftoric}, it suffices to show that if $(\mathbb{P}^2,\tfrac{1}{n}(C_1+\ldots+C_n))$ is not lc (resp. has a lc center that is not a smooth conic), then there exists a toric prime divisor $E$ over $\mathbb{P}^2$ such that 
    \begin{equation}
    A_{\mathbb{P}^2}(E)-\tfrac{1}{n}\mathrm{ord}_E(C_1+\ldots+C_n)<0 \quad(\mathrm{resp. }\le0).\label{eq--lct--toric}
    \end{equation}
    Assume that $(\mathbb{P}^2,\tfrac{1}{n}(C_1+\ldots+C_n))$ is not lc (resp.~has a lc center that is not a smooth conic) but no toric prime divisor over $X$ satisfies \eqref{eq--lct--toric}. 
    Then there exists a non-toric prime divisor over $\mathbb{P}^2$ that satisfies \eqref{eq--lct--toric}. 
    Let $E$ be a prime divisor over $\mathbb{P}^2$ that satisfies \eqref{eq--lct--toric}. In the latter case, we can choose $E$ not to be a smooth conic. Let $$X=X_l\to X_{l-1}\to \ldots \to X_1\to X_0=\mathbb{P}^2$$ be a succession of one-point blowups of the center of $E$.
    
   First, we consider the case where $l=0,1$, or $2$. If $l=0$ and $A_{\mathbb{P}^2}(E)-\tfrac{1}{n}\mathrm{ord}_E(C_1+\ldots+C_n)<0$, since the $C_i$'s are different from each other, then $\mathrm{deg}\,E\le1$ and hence $E$ is toric, which contradicts the assumption above.
    Consider the case where $l=0$ and $A_{\mathbb{P}^2}(E)-\tfrac{1}{n}\mathrm{ord}_E(C_1+\ldots+C_n)=0$, $\mathrm{deg}E\le 2$ by assumption. Since $E$ is not toric, we see that $E$ is a smooth conic, which also contradicts the choice of $E$. 
    If $l=1$ or $2$, then $E$ is toric (see the paragraph below Definition \ref{DefToricDiv}), and this is also a contradiction.
    
Thus, we may assume that $l\ge 3$.
For any $1\le i\le l$, let $E_i$ be the exceptional divisor of $\pi_i\colon X_i\to X_{i-1}$.
Here, we note that $E=E_l$.
Let $1\le j\le l$ be the smallest number such that $E_j$ is not toric.
By the above discussion, we see that $j\ge3$.
Let $L$ be the line passing through $c_{\P^2}(E_{j-1})$ such that $\pi_2$ is the blowing up at the common point of $E_1$ and $(\pi_1^{-1})_*L$. Let $f_i\colon X_i\to\P^2$ be the canonical morphism for any $i$. Note that $f_{j-1}^{-1}(c_{\P^2}(E_j))$ is a tree of rational curves and if $T\subset \SL(3)$ is a maximal torus making $E_{j-1}$ toric, then $f_{j-1}$ is $T$-equivariant. 
We may assume that
\[
A_{\mathbb{P}^2}(E_{j-1})-\tfrac{1}{n}\mathrm{ord}_{E_{j-1}}(C_1+\ldots+C_n)\ge0.
\]
Let $D\subset C_1+\ldots+C_n$ be the non-toric part.
By Lemma \ref{lema--non-toric} and the assumption that $C_i$ are cubics, $E_{j-1}$ and $(f_{j-1}^{-1})_*D$ have intersection multiplicity at most $n$ for any point and $j\ge3$.
Observing the process of blowing up in $f_{j-1}$, we see that $c_{X_{j-1}}(E_j)$ is not a nodal point of $F$ or not on $(f_{j-1}^{-1})_*L$ or the strict transform of $E_1$. Thus, we see that no toric divisor on $X_{j-1}$ passes through $c_{X_{j-1}}(E_j)$.
If $$A_{\mathbb{P}^2}(E_{j-1})-\tfrac{1}{n}\mathrm{ord}_{E_{j-1}}(C_1+\ldots+C_n)=0,$$
then $c_{X_{j-1}}(E_j)$ is never a non-lc center by \cite{kawakita2007inversion} applied to $(X_{j-1},E_{j-1}+\tfrac{1}{n}(f_{j-1}^{-1})_*D)$.
Here, note that $K_{X_{j-1}}+E_{j-1}+\tfrac{1}{n}(f_{j-1}^{-1})_*D-f_{j-1}^*(K_{\P^2}+\tfrac{1}{n}(C_1+\ldots+C_n))$ is effective locally around $c_{X_{j-1}}(E_j)$.
If $$A_{\mathbb{P}^2}(E_{j-1})-\tfrac{1}{n}\mathrm{ord}_{E_{j-1}}(C_1+\ldots+C_n)>0,$$
then $c_{X_{j-1}}(E_j)$ is never a lc center by a similar argument.
However, this contradicts the assumption that $E$ satisfies \eqref{eq--lct--toric} in both cases.
Therefore, we conclude the proof.
\end{proof}

We remark that if we choose $C_1=Q+L_1$ and $C_2=Q+L_2$ for some smooth conic $Q$ and general lines $L_1$ and $L_2$, then $(\P^2,\tfrac{1}{2}(C_1+C_2))$ is not klt but $C_1+C_2$ is GIT stable.

We deduce the following from Theorem \ref{thm--cubics}.

\begin{cor}\label{cor--cubic--linear--sys}
    Let $\calL\in \mathcal{X}_{k,3,2}$ be a $k$-dimensional linear system of plane cubic curves for any $k\ge1$.
    Then, $\calL$ is GIT (semi)stable if and only if for any choice of generators $C_1,\ldots, C_{k+1}\in\mathcal{L}$, the log pair $\left(\mathbb{P}^2, \tfrac{1}{k+1}(C_1+\ldots+C_{k+1})\right)$ is klt (resp.~lc).
\end{cor}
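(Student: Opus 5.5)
The plan is to combine Theorem \ref{thm:stab_linear_systems} (with $(d,n)=(3,2)$) and Theorem \ref{thm--cubics} (with $n=k+1\ge 2$). Theorem \ref{thm:stab_linear_systems} says that $\calL$ is GIT stable (resp.~semistable) if and only if, for every choice of generators $C_1,\ldots,C_{k+1}$, the degree $3(k+1)$ curve $C_1+\ldots+C_{k+1}$ is GIT stable (resp.~semistable). Generators are linearly independent, hence pairwise distinct, so Theorem \ref{thm--cubics} applies to each such tuple.

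\emph{Semistable case.} By Theorem \ref{thm--cubics}, $C_1+\ldots+C_{k+1}$ is semistable if and only if $\bigl(\P^2,\tfrac{1}{k+1}(C_1+\ldots+C_{k+1})\bigr)$ is lc. Quantifying over all generator tuples and applying Theorem \ref{thm:stab_linear_systems} gives the statement directly.

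\emph{Stable case, klt implies stable.} If every such pair is klt, then each sum is stable by Theorem \ref{thm--cubics}, so $\calL$ is stable by Theorem \ref{thm:stab_linear_systems}.

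\emph{Stable case, stable implies klt.} Suppose $\calL$ is stable. Then every sum is stable, so by Theorem \ref{thm--cubics} each pair is either klt or lc with a unique lc center equal to a smooth conic $Q$. I must rule out the second possibility. If a smooth conic $Q$ is an lc center of $\bigl(\P^2,\tfrac{1}{k+1}\sum C_i\bigr)$, then, since $A_{\P^2}(Q)=1$, we get $\ord_Q\bigl(\sum C_i\bigr)=k+1$. As each cubic contains $Q$ with multiplicity at most $1$, every generator $C_i$ contains $Q$, so every member of $\calL$ contains $Q$. Thus $\calL=Q+\Lambda$, where $\Lambda$ is a $k$-dimensional linear system of lines, which forces $k\le 2$.

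I would then show that such $\calL$ is not stable, using a one-parameter subgroup and Theorem \ref{thm:stab_linear_systems} read backwards: it suffices to exhibit one generator tuple whose sum is not stable.
\begin{itemize}
\item If $k=2$, then $\Lambda$ is the complete system of lines, and the generators $Q+L_1,Q+L_2,Q+L_3$ can be taken with $L_1,L_2,L_3$ concurrent lines. Taking $L_1,L_2$ tangent to $Q$, or choosing the lines so that the pair acquires a toric lc center, should violate the condition.
\item If $k=1$, the pencil $Q+\Lambda$ has $\Lambda$ a pencil of lines through a point $p$. Taking generators $L_1,L_2$ through $p$ with $L_1$ tangent to $Q$ (when $p\notin Q$), or exploiting $p\in Q$, gives a toric divisor $E$ over $p$ with $A_{\P^2}(E)\le\tfrac{1}{2}\ord_E(2Q+L_1+L_2)$, so that Lemma \ref{hypersurftoric} shows the sum is not stable.
\end{itemize}

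The main obstacle is this final step: the remark after Theorem \ref{thm--cubics} shows that for general lines the sum \emph{is} stable, so one must choose the generators cleverly, namely lines tangent to $Q$ so as to create a higher-order tangency point. I would try first the weighted blowup at a tangency point $p\in Q\cap L_1$ with weights $(1,2)$ in coordinates where $L_1=\{y=0\}$ and $Q=\{y=x^2+\ldots\}$. Then $A=3$, and $\ord$ is $2$ for each copy of $Q$ and for $L_1$, while the other lines through $p$ contribute $\ge1$. This should give $A\le \tfrac{1}{k+1}\ord$, which is the required violation.
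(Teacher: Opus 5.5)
Your proposal follows the paper's proof. It uses the same reduction via Theorems \ref{thm:stab_linear_systems} and \ref{thm--cubics} (with Lemma \ref{hypersurftoric}) to the case where the unique lc center is a smooth conic $Q$. It makes the same observation that $Q$ is then the fixed part of $\calL$, so $k\le 2$. It also uses the same destabilizing toric divisor: your $(1,2)$-weighted blowup at a tangency point $p$ is exactly the paper's divisor, obtained by blowing up $p$ and then the point where the exceptional curve meets the strict transform of $Q$.

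One slip needs fixing for $k=2$. Three concurrent lines are linearly dependent, so they cannot give generators. Making $L_1,L_2$ tangent to $Q$ does not help by itself either: with no second line through the tangency point you only get $\ord_E=8<9$. The correct choice, as in the paper, is $L_1$ tangent to $Q$ at $p$, $L_2$ another line through $p$, and $L_3$ not through $p$. This gives $A_{\P^2}(E)=3=\tfrac13\ord_E(3Q+L_1+L_2+L_3)$.

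For $k=1$, the divisor should lie over the tangency point of $L_1$ with $Q$, not over the base point of the pencil of lines.
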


\begin{proof}
 Combining Theorem \ref{thm--cubics} with Theorem \ref{thm:stab_linear_systems} and Lemma \ref{hypersurftoric}, we see that it suffices to show that if for some choices of generators $C_1,\ldots, C_{k+1}\in\mathcal{L}$, the log pair $\left(\mathbb{P}^2, \tfrac{1}{k+1}(C_1+\ldots+C_{k+1})\right)$ is not klt but lc with only one lc center that is a smooth conic on $\P^2$, then for some other choices of generators $C'_1,\ldots, C'_{k+1}\in\mathcal{L}$, there exists a toric prime divisor $E$ over $\P^2$ such that 
 \begin{equation}
 A_{\mathbb{P}^2}(E)-\tfrac{1}{k+1}\mathrm{ord}_E(C'_1+\ldots+C'_{k+1})\le0.\label{eq--final--de--attekure}
 \end{equation}
 Let $Q$ be the smooth conic, which is the unique lc center of $\left(\mathbb{P}^2, \tfrac{1}{k+1}(C_1+\ldots+C_{k+1})\right)$.
 Then, $Q$ is the fixed part of $\calL$, $k\le2$ and we can find generators $C'_1,\ldots,C'_{k+1}$ for $\calL$ such that $C'_1-Q$ intersects $Q$ tangentially at a point $p$, and $C'_2-Q$ intersects with $Q$ at the same point $p$ if $k=2$.
 Let $\pi\colon\mathrm{Bl}_p(\P^2)\to\P^2$ be the blowing up at this point $p$ and let $q$ be the common point of $F:=\mathrm{Exc}(\pi)$ and $\pi_*^{-1}(Q)$.
 Since $A_{\P^2,\tfrac{1}{k+1}(C'_1+\ldots+C'_{k+1})}(F)=\tfrac{1}{k+1}$ and $\pi_*^{-1}(C'_1-Q)$ also passes through $q$, if we let $E$ be the exceptional divisor of the blowing up at $q$, then $E$ is toric and satisfies \eqref{eq--final--de--attekure}.
\end{proof}
 
 When $k=1$, this provides an alternative description of Miranda's result \cite{stabMiranda} on GIT stability of pencils of cubic curves. When $k=2$, it gives us a tool to prove Theorem \ref{thm:main} and also allows us to establish the following.

\begin{cor}\label{cor: unstable_subpencil}
    Let $\mathcal{M}$ be a net of plane cubic curves. 
    Suppose that $\mathcal{M}$ contains a smooth cubic curve.
    If $\mathcal{M}$ is not GIT stable, then $\mathcal{M}$ contains an unstable subpencil. Conversely, if $\mathcal{M}$ is good and contains an unstable subpencil corresponding to one of the cases (1), (2), (3), or (4) of \cite[Proposition 5.1]{stabMiranda}, then $\mathcal{M}$ is not stable.
    \end{cor}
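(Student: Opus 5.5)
The plan is to combine Corollary \ref{cor--cubic--linear--sys} (for $k=2$ and $k=1$) with the Hilbert--Mumford criterion and Miranda's list \cite[Proposition 5.1]{stabMiranda}.

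\emph{First statement.} Suppose $\mathcal{M}$ is not GIT stable. By Corollary \ref{cor--cubic--linear--sys} there are generators $C_1,C_2,C_3$ such that $(\P^2,\tfrac13(C_1+C_2+C_3))$ is not klt. By the proof of Theorem \ref{thm--cubics} (via Lemma \ref{hypersurftoric}), and after possibly changing generators as in the proof of Corollary \ref{cor--cubic--linear--sys}, we get a toric prime divisor $E$ over $\P^2$ with
\[
3A_{\P^2}(E)\le \ord_E(C_1)+\ord_E(C_2)+\ord_E(C_3).
\]
Equivalently, there is a one-parameter subgroup $\lambda$ with $\mu(\mathcal{M},\lambda)\le 0$. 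Since $\ord_E$ is a valuation, the map $C\mapsto\ord_E(C)$ on $|\mathcal{M}|$ takes its minimum value on a dense open set and larger values on a filtration by linear subspaces. Choose the generators adapted to this filtration (as in the proof of \cite[Lemma 3.2]{hz}), so that the sum of orders is maximal.

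Let $C_3$ be the generator with the smallest order, and let $C_1,C_2$ span a subpencil $\Lambda$. I would then show
\[
2A_{\P^2}(E)\le \ord_E(C_1)+\ord_E(C_2).
\]
The reason is that $\ord_E(C_3)$ is at most the average of the three orders, so removing it cannot decrease the average. Hence $(\P^2,\tfrac12(C_1+C_2))$ is not klt along a toric divisor. By Corollary \ref{cor--cubic--linear--sys} with $k=1$, $\Lambda$ is not stable.

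To upgrade this to \emph{unstable}, I would use the hypothesis that $\mathcal{M}$ contains a smooth cubic. This hypothesis should force strict inequality in the comparison, or let $\lambda$ be perturbed to give $\mu<0$ on some subpencil. \textbf{This is the main obstacle}, because the averaging argument only yields inequality, not strict inequality. My first attempt is the following. If equality held for every adapted subpencil, then $C_1,C_2,C_3$ all have the same order along $E$. Hence every member of $\mathcal{M}$, in particular the smooth cubic, satisfies $\ord_E(C)\ge \tfrac{A(E)}{1}\cdot 1$, i.e. $\ord_E(C)\ge A_{\P^2}(E)$. A smooth cubic has $\lct=1$, which contradicts this bound for toric $E$, so some adapted subpencil must satisfy strict inequality. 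I may also need to pass to a weighted blowup and compare with Miranda's explicit list.

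\emph{Converse.} Assume $\mathcal{M}$ is good and contains a subpencil $\Lambda$ of type (1)–(4) in \cite[Proposition 5.1]{stabMiranda}. Each of these cases gives an explicit configuration: a common base point with a prescribed singularity or tangency of the generators. From this I would read off a toric divisor $E$ with
\[
2A_{\P^2}(E)<\ord_E(C_1)+\ord_E(C_2).
\]
Completing to a net with any third generator $C_3$, I would check case by case that
\[
3A_{\P^2}(E)\le \ord_E(C_1)+\ord_E(C_2)+\ord_E(C_3).
\]
This uses the fact that all members of $\mathcal{M}$ pass through the seven base points. For a good net, the base configuration near the bad point forces $\ord_E(C_3)$ to be at least a positive amount, namely the multiplicity imposed by the base points. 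Then Corollary \ref{cor--cubic--linear--sys} shows that $\mathcal{M}$ is not stable.
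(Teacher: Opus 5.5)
Your route for the first assertion is the paper's argument rewritten in valuative language. The paper fixes a smooth general member as one generator via \cite[Theorem 3.3]{hz}. It then uses additivity of Hilbert--Mumford weights \cite[Lemma 2.3]{hz}: the positive weight of the smooth cubic forces the complementary sextic to be unstable.

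However, the step you flag as the main obstacle is not closed by what you wrote. The fact $\lct(\P^2,C)=1$ only gives $\ord_E(C)\le A_{\P^2}(E)$. That is consistent with the equality $\ord_E(C)=A_{\P^2}(E)$ produced by your contradiction hypothesis, so no contradiction follows. What you need is the strict inequality $A_{\P^2}(E)>\ord_E(C)$ for every toric $E$ and every smooth cubic $C$. This holds because a smooth cubic is GIT stable, and Lemma \ref{hypersurftoric} (with $n=2$, $d=3$) then gives exactly $A_{\P^2}(E)>\ord_E(C)$ for all toric $E$. Equivalently, $(\P^2,C)$ is plt and its only lc place is $C$ itself, which is not toric.

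With that input, the averaging and the argument by contradiction are unnecessary. In a basis adapted to the $\ord_E$-filtration, the generator of minimal order can be taken to be a general, hence smooth, member $C_3$. Then $\ord_E(C_1)+\ord_E(C_2)\ge 3A_{\P^2}(E)-\ord_E(C_3)>2A_{\P^2}(E)$. So $C_1+C_2$ is unstable by Lemma \ref{hypersurftoric}, and $\langle C_1,C_2\rangle$ is unstable by Theorem \ref{thm:stab_linear_systems}. No weighted blowups or comparison with Miranda's list are needed here.

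For the converse, your plan is at the same level of detail as the paper's ``routine check''. Two points to keep in mind:
\begin{enumerate}
\item What must be verified is the quantitative bound $\ord_E(C_3)\ge 3A_{\P^2}(E)-\ord_E(C_1)-\ord_E(C_2)$, not merely that $\ord_E(C_3)$ is positive.
\item You only need this for some choice of generators of $\mathcal{M}$. Your claim for an arbitrary third generator is stronger than required.
\end{enumerate}
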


\begin{proof}
Suppose that $\mathcal{M}$ is not GIT stable. By assumption and by \cite[Theorem 3.3]{hz}, we can choose generators $C_1, C_2$ and $C_3$ for $\mathcal{M}$ with $C_1$ smooth and such that $C_1+C_2+C_3$ is not GIT stable. Since $C_1$ is GIT stable, \cite[Lemma 2.3]{hz} implies that $C_2+C_3$ is unstable. By \cite[Theorem 3.3]{hz}, this means that the subpencil generated by $C_2$ and $C_3$ is unstable. Conversely, if $\mathcal{M}$ is good and contains an unstable subpencil that belongs to the list as in the statement, since $\mathcal{M}$ has a zero-dimensional base locus consisting of seven base points, then it is routine to check that there exist generators $C_1$, $C_2$, and $C_3$ for $\mathcal{M}$ such that $(\mathbb{P}^2,\tfrac{1}{3}(C_1+C_2+C_3))$ is not klt.
Thus, the assertion follows from Corollary \ref{cor--cubic--linear--sys}.
\end{proof}

\section{The proof of our main theorem}\label{sec:new_results}

We are finally in a position to prove Theorem \ref{thm:main}. Given any good net $\mathcal{M}\in \mathcal{X}_{2,3,2}$, consider the associated rational map $\psi\colon \mathbb{P}^2\dashrightarrow |\mathcal{M}|^{\vee}\simeq\mathbb{P}^2$ of degree two and the corresponding generically finite morphism of degree two $\varphi\colon Y\to\mathbb{P}^2$ induced by $\psi$ with branch locus a quartic curve $B\subset\mathbb{P}^2$ as in (\ref{eq:diag_cubics}). In this setup, we first prove the following.

\begin{prop}
\label{prop--log--crepant}
With notations as above, fix any three generators $C_1$, $C_2$ and $C_3$ of $\mathcal{M}$ and let $H_1$, $H_2$ and $H_3$ be the corresponding lines in $ |\mathcal{M}|^{\vee}\simeq \mathbb{P}^2$ (see Remark \ref{rmk:duality}).
    Let $E$ be the exceptional divisor of $\pi: Y\to\mathbb{P}^2$, the blowing up of $\mathbb{P}^2$ at the seven base points of $\mathcal{M}$, and let $\tilde{C}_i:=\pi^*C_i-E$.
    Then, the following three log pairs are pairwise log crepant in the sense of Definition \ref{defi--log--pair}: $\left(\mathbb{P}^2,\tfrac{1}{3}(C_1+C_2+C_3)\right)$,
        $\left(Y,\tfrac{1}{3}(\tilde{C}_1+\tilde{C}_2+\tilde{C}_3)\right)$, and 
        $\left(\mathbb{P}^2,\tfrac{1}{3}(H_1+H_2+H_3)+\tfrac{1}{2}B\right)$.
\end{prop}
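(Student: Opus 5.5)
Both log crepancies will be checked on the common cover $Y$, using the two morphisms $\pi\colon Y\to\mathbb{P}^2$ and $\varphi\colon Y\to|\mathcal{M}|^\vee$ from diagram (\ref{eq:diag_cubics}). It then suffices to show that the $\mathbb{Q}$-divisors induced on $Y$ by the first and third pairs, in the sense of Definition \ref{def:logcrep}, both equal $\tfrac13(\tilde C_1+\tilde C_2+\tilde C_3)$. This gives the two log crepancies with the middle pair, and the third follows by transitivity, since $Y$ serves as a common model.

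\emph{First and second pairs.} The morphism $\pi$ is birational, so its Stein factorization is trivial. The seven base points may be infinitely near, so $\pi$ is a composite of seven point blowups, and $E$ denotes the total transform divisor with $K_Y=\pi^*K_{\mathbb{P}^2}+E$. Since each $C_i$ passes through the base scheme, $\tilde C_i=\pi^*C_i-E$ is effective; it is the member of $|-K_Y|$ corresponding to $C_i$. Then
\[
K_Y+\tfrac13\textstyle\sum\tilde C_i=\pi^*K_{\mathbb{P}^2}+E+\pi^*\tfrac13\textstyle\sum C_i-E=\pi^*\bigl(K_{\mathbb{P}^2}+\tfrac13\textstyle\sum C_i\bigr),
\]
and $\pi_*\tilde C_i=C_i$. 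Hence $\Delta_Y^1=\tfrac13\sum\tilde C_i$.

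\emph{Third and second pairs.} Here Stein factorize $\varphi$ as $Y\xrightarrow{g}\overline{Y}\xrightarrow{\bar\pi}\mathbb{P}^2$, as in Lemma \ref{doubleplane} and Proposition \ref{extended}. Since $\bar\pi$ is a double cover branched exactly along the reduced quartic $B$, Hurwitz gives $K_{\overline{Y}}=\bar\pi^*(K_{\mathbb{P}^2}+\tfrac12 B)$. The line $H_i$ is the hyperplane in $|\mathcal{M}|^\vee$ corresponding to $C_i$, so $\varphi^*H_i=\tilde C_i$ as divisors: $\varphi$ is the morphism given by $|-K_Y|$, and the pullback of the hyperplane attached to $C_i$ is the section of $-K_Y$ defining $\tilde C_i$. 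Therefore $\bar\pi^*H_i=g_*\tilde C_i$, and
\[
K_{\overline{Y}}+\tfrac13\textstyle\sum g_*\tilde C_i=\bar\pi^*\bigl(K_{\mathbb{P}^2}+\tfrac13\textstyle\sum H_i+\tfrac12 B\bigr).
\]
Next, $g$ is the crepant contraction of $(-2)$-curves, so $K_Y=g^*K_{\overline{Y}}$. Moreover $g^*g_*\tilde C_i=\tilde C_i$, because $\tilde C_i=\varphi^*H_i$ is pulled back from $\overline{Y}$. Hence $K_Y+\tfrac13\sum\tilde C_i=g^*\bigl(K_{\overline{Y}}+\tfrac13\sum g_*\tilde C_i\bigr)$, and so $\Delta_Y^2=\tfrac13\sum\tilde C_i=\Delta_Y^1$.

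The main point requiring care is the ramification formula. One must check that $\bar\pi$ has no ramification divisor other than the preimage of $B$, i.e.\ that the branch divisor is exactly the reduced $B$ with coefficient $\tfrac12$. This follows from Lemma \ref{doubleplane}: $\overline{Y}$ is normal with canonical singularities, and $\bar\pi$ is finite of degree two and branched along $B$. The remaining steps are formal.
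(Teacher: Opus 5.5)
Your proof is correct and follows essentially the same route as the paper: the direct computation $K_Y+\tfrac13\sum\tilde C_i=\pi^*\bigl(K_{\mathbb{P}^2}+\tfrac13\sum C_i\bigr)$ for the first pair (which the paper defers to \cite[Lemma 5.4]{hz}), and the Stein factorization $Y\to\overline{Y}\to\mathbb{P}^2$ with the ramification formula on the finite part (the paper's appeal to \cite[Proposition 5.20]{komo}). The only difference is the last step: the paper uses the negativity lemma \cite[Lemma 3.39]{komo}, relying on both $K_Y+\tfrac13\sum\tilde C_i$ and its counterpart on $\overline{Y}$ being $\mathbb{Q}$-linearly trivial, whereas you check $K_Y=g^*K_{\overline{Y}}$ and $\tilde C_i=g^*\bar\pi^*H_i$ directly, which is equally valid and a bit more explicit.
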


\begin{proof}
    First, the assertion that the first two pairs are log-crepant follows from the same argument as in the proof of \cite[Lemma 5.4]{hz}.
    For the remaining log pairs, consider the Stein factorization $g\colon Y\to Y'$ of $\varphi\colon Y\to\mathbb{P}^2$, and let $\varphi'\colon Y'\to \mathbb{P}^2$ denote the canonical morphism.
     By \cite[Proposition 5.20]{komo} applied to $\varphi'$, $(Y',\tfrac{1}{3}g_*(\tilde C_1+ \tilde C_2+ \tilde C_3))$ and $(\mathbb{P}^2,\tfrac{1}{3}(H_1+H_2+H_3)+\tfrac{1}{2}B)$ are log crepant.
     In particular, we have that $K_{Y'}+\tfrac{1}{3}g_*(\tilde C_1+\tilde C_2+\tilde C_3)\sim_{\mathbb{Q}}0$.
     Now, since $K_{Y'}+\tfrac{1}{3}g_*(\tilde C_1+\tilde C_2+\tilde C_3)\sim_{\mathbb{Q}}0$ and $K_Y+\tfrac{1}{3}(\tilde C_1+\tilde C_2+\tilde C_3)\sim_{\mathbb{Q}}0$, we can apply \cite[Lemma 3.39]{komo} to deduce that 
     \[
     K_Y+\tfrac{1}{3}(\tilde C_1+\tilde C_2+\tilde C_3)=g^*\left(K_{Y'}+\tfrac{1}{3}g_*(\tilde C_1+\tilde C_2+\tilde C_3)\right).
     \]
     Thus, the second and third pairs are log crepant, which completes the proof.
     \end{proof}

Combining Corollary \ref{cor--cubic--linear--sys} and Proposition \ref{prop--log--crepant}, we thus obtain the following criterion relating GIT stability of $\mathcal{M}$ and that of $B$.

\begin{cor}\label{cor--quartic-to-net-cubics}
    With the same notations as in Proposition \ref{prop--log--crepant}, if $B$ is GIT (semi)stable, then so is the net of cubics $\mathcal{M}$.
    \end{cor}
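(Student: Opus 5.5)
We will run the chain of log-crepant equivalences from Proposition \ref{prop--log--crepant} in the direction from $B$ to $\mathcal{M}$, and then conclude with Corollary \ref{cor--cubic--linear--sys}. By that corollary, it suffices to show that for \emph{every} choice of generators $C_1,C_2,C_3$ of $\mathcal{M}$ the pair $(\P^2,\tfrac13(C_1+C_2+C_3))$ is klt (resp.~lc). Fix such generators and let $H_1,H_2,H_3\subset|\mathcal{M}|^\vee$ be the corresponding lines, as in Remark \ref{rmk:duality}. Since the $C_i$ generate $\mathcal{M}$, the lines $H_1,H_2,H_3$ are linearly independent, so $H_1+H_2+H_3$ is a triangle of toric lines. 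By Proposition \ref{prop--log--crepant} and \cite[Proposition 5.20]{komo}, the property we need is equivalent to $(\P^2,\tfrac13(H_1+H_2+H_3)+\tfrac12 B)$ being klt (resp.~lc).

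We then reduce this to the GIT stability of $B$ using Lemma \ref{hypersurftoric}, taking the maximal torus $T$ for which $H_1,H_2,H_3$ are the coordinate lines. Suppose the pair is not lc (resp.~not klt). As a first attempt, we show that some divisor computing the failure can be taken to be $T$-toric. The boundary $\tfrac13(H_1+H_2+H_3)$ consists of toric lines with small coefficient, so a non-toric bad divisor would have to arise from the non-toric components of $B$. We would then adapt the argument of Theorem \ref{thm--cubics}, namely the minimal non-toric index $j$ in a blowup sequence combined with inversion of adjunction \cite{kawakita2007inversion}, to show that a toric divisor $E$ exists with
\[
A_{\P^2}(E)-\tfrac13\ord_E(H_1+H_2+H_3)-\tfrac12\ord_E(B)<0\quad(\text{resp.}\le 0).
\]
For a toric valuation $E$ with weights $(a_0,a_1,a_2)$ normalized by $\min a_i=0$, we have $A_{\P^2}(E)=a_0+a_1+a_2$ and $\ord_E(H_1+H_2+H_3)=a_0+a_1+a_2$. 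The displayed inequality therefore becomes $\tfrac23 A_{\P^2}(E)<\tfrac12\ord_E(B)$, that is,
\[
\frac{A_{\P^2}(E)}{\ord_E(B)}<\frac34 \quad(\text{resp.}\le).
\]
By Lemma \ref{hypersurftoric} with $n=2$ and $d=4$, this says exactly that $B$ is unstable (resp.~not stable). This contradicts the hypothesis, which proves the corollary.

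The main obstacle is the toric reduction, i.e.\ making sure a non-toric divisor cannot be the only witness of failure. We expect the ADE hypothesis on $B$ from Lemma \ref{doubleplane} to help here. Away from the triangle, $(\P^2,\tfrac12 B)$ is klt because $B$ has only ADE singularities, so any failure is located at a point of $H_1\cup H_2\cup H_3$, and at vertices or along toric lines the analysis is controlled by toric blowups. If a point of $H_i$ meets a singular point of $B$ that is not a vertex, we would argue as in Theorem \ref{thm--cubics}: run through the blowup tower, and at the first non-toric step apply adjunction to the toric exceptional curve. Alternatively, we could change the choice of generators, as in Corollary \ref{cor--cubic--linear--sys}, so that the bad point becomes a vertex of the triangle.
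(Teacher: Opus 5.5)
\textbf{There is a genuine gap: the toric-reduction step carries the whole content of the corollary, and as you formulate it, it is false.}

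Your reduction to the pair $(\P^2,\mathcal{D})$ with $\mathcal{D}=\tfrac12B+\tfrac13(H_1+H_2+H_3)$ is correct. So is your computation for divisors that are toric with respect to the torus $T$ of the triangle. For such $E$ one has $A_{\P^2}(E)=\ord_E(H_1+H_2+H_3)$, hence $A_{\P^2,\mathcal{D}}(E)=\tfrac23\bigl(A_{\P^2}(E)-\tfrac34\ord_E(B)\bigr)$, and Lemma \ref{hypersurftoric} applies. But this only shows that $T$-toric divisors cannot witness a failure when $B$ is semistable. Everything else sits in the step you leave open.

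Because $T$ is fixed by $H_1,H_2,H_3$, no $T$-toric divisor is centered at a point of some $H_i$ that is not a vertex. Here is a concrete case:
\begin{itemize}
\item[] Take $B=\{x^3z+y^4=0\}$. It is a reduced quartic whose only singularity is an $\mathbf{E}_6$ point at $p=(0:0:1)$, so it is an admissible branch curve. Take $H_1=\{x=0\}$ and $H_2,H_3$ general.
\item[] The weighted blowup of $p$ with weights $(4,3)$ on $(x,y)$ has $A_{\P^2,\mathcal{D}}=7-6-\tfrac43<0$, so the pair is not lc.
\item[] Yet $H_1\cap B=\{p\}$, so all three vertices lie off $B$. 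Hence every $T$-toric $E$ has $\ord_E(B)=0$ and positive log discrepancy.
\end{itemize}
This $B$ is unstable, so there is no conflict with the corollary. It does show that the toric reduction is not a structural fact as in Theorem \ref{thm--cubics}. It can only hold by using the semistability of $B$, which is exactly what has to be proved.

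The argument of Theorem \ref{thm--cubics} does not transfer. There the torus is chosen adapted to the center, which makes the first two blowups toric. Lemma \ref{lema--non-toric} then bounds the intersection of the non-toric part (coefficient $\tfrac1n$) with a toric exceptional curve, so inversion of adjunction excludes non-lc centers. With the triangle fixed, the first blowup at a non-vertex point is already non-toric. For $\tfrac12B$ you would also need a new intersection bound, and this can fail once $B$ has a triple point: at an $\mathbf{E}_6$ point the strict transform meets the first exceptional curve at one point with multiplicity $3$. Moving the bad point to a vertex by replacing $H_2$ is legitimate, but it leaves you with the same unproved vertex analysis.

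The paper avoids toric reduction entirely. By \cite[Theorems 1.3, 1.4]{ascher2024wall}, if $B$ is semistable (resp.~stable) and not S-equivalent to a double conic, then $(\P^2,\tfrac34B)$ is lc (resp.~klt). Then $\mathcal{D}=\tfrac23\cdot\tfrac34B+\tfrac13(H_1+H_2+H_3)$ is a convex combination with the lc pair $(\P^2,H_1+H_2+H_3)$, and stable $B$ always falls in this case. The remaining case is $B$ with an $\mathbf{A}_i$ point, $i\ge4$, S-equivalent to a double conic, where $(\P^2,\tfrac34B)$ is not lc. The paper settles it by a direct two-blowup computation at that point, using that $B$ is not a cubic plus an inflectional tangent line, together with inversion of adjunction. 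To complete your route, you would need an equivalent explicit local check, for semistable $B$, at both vertices and non-vertex points of the triangle.
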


\begin{proof}
By Corollary \ref{cor--cubic--linear--sys} and Proposition \ref{prop--log--crepant}, it suffices to show that if the branch curve $B$ is GIT (semi)stable, then for any choice of three linearly independent lines $H_1$, $H_2$ and $H_3$, the log pair $(\mathbb{P}^2,\tfrac{1}{2}B+\tfrac{1}{3}(H_1+H_2+H_3))$ is klt (resp.~lc). The argument is as follows. 

First, note that since $Y$ is smooth, $B$ is reduced and has at worst ADE singularities. Thus, by \cite[Theorems 1.3, 1.4]{ascher2024wall}, the plane quartic $B$ is GIT (semi)stable and not S-equivalent (cf.~Section \ref{sec:git}) to a double conic if and only if $(\mathbb{P}^2,\tfrac{3}{4}B)$ is klt (resp.~lc) (see also \cite[Proposition 6.3]{ascher2024wall}). In particular, if the curve $B$ is GIT (semi)stable and not S-equivalent to a double conic, then it is clear that $(\mathbb{P}^2,\tfrac{1}{3}(H_1+H_2+H_3)+\tfrac{1}{2}B)$ is klt (resp.~lc), since the pair $(\mathbb{P}^2,(H_1+H_2+H_3))$ is lc for any choice of three linearly independent lines $H_1$, $H_2$ and $H_3$. 

Next, we consider the case where $B$ is GIT semistable and S-equivalent to a double conic. Since $B$ is reduced, it has only $\mathbf{A}_i$ singularities (cf.~\cite[Section 1.12]{mumford}), and since $B$ is S-equivalent to a double conic, we see that $i\ge4$. 
Now, since $(\mathbb{P}^2,\tfrac{3}{4}B)$ is klt outside of the $\mathbf{A}_i$ singularities of $B$, so is the pair $(\mathbb{P}^2,\tfrac{1}{2}B+\tfrac{1}{3}(H_1+H_2+H_3))$ for any choice of three linearly independent lines $H_1$, $H_2$, and $H_3$.

Here, setting $\mathcal{D} \coloneqq \tfrac{1}{2}B+\tfrac{1}{3}(H_1+H_2+H_3)$, we further claim that $(\mathbb{P}^2, \mathcal{D})$ is lc for any choice of $H_1$, $H_2$, and $H_3$ but not klt for some choice of $H_1$, $H_2$, and $H_3$.
Let $p\in B$ be an $\mathbf{A}_i$ singularity for some $i\ge4$.
Since at least one of the lines $H_j$ does not pass through $p$, by changing the order, we may assume that $H_3$ does not pass through $p$. Let $\pi_1\colon \mathrm{Bl}_p\mathbb{P}^2\to \mathbb{P}^2$ be the blowing up at $p$ and $E_1$ the exceptional curve.
Let $q$ be the unique closed point contained in $(\pi_1)_*^{-1}B\cap E_1$ and $\pi_2\colon Z\to \mathrm{Bl}_p\mathbb{P}^2$ be the blowing up at $q$ with the exceptional curve $E_2$.
Since $H_1$ and $H_2$ are linearly independent, changing the ordering if necessary, we may assume that $(\pi_1)_*^{-1}H_2$ does not pass through $q$.
Since $B$ is a semistable plane quartic, $B$ does not consist of a cubic and an inflectional tangent line by \cite[Section 1.12]{mumford}. By this, we see that $H_1\not\subset B$ and hence $(\pi_2)_*^{-1}(\pi_1)_*^{-1}B$ and $(\pi_2)_*^{-1}(\pi_1)_*^{-1}H_1$ are disjoint around $E_2$. Thus, $0\le A_{\mathbb{P}^2,\mathcal{D}}(E_1)\le1$ and $0\le A_{\mathbb{P}^2,\mathcal{D}}(E_2)\le1$. Moreover, it suffices to consider the case where the lines $H_1$, $H_2$, and $H_3$ satisfy that $H_2$ passes through $p$ and $(\pi_1)_*^{-1}H_1$ passes through $q$. Indeed, in this case, $(\P^2,\mathcal{D})$ is the most singular at $p$ among all choices of $H_1$, $H_2$ and $H_3$.
Therefore, it suffices to consider this case, and we then have that $A_{\mathbb{P}^2,\mathcal{D}}(E_2)=0$.
Let $D_Z$ be the $\mathbb{Q}$-divisor determined by $(\pi_1\circ\pi_2)_*D_Z=\mathcal{D}$ and
\[
K_Z+D_Z=\pi_2^*\pi_1^*\left(K_{\mathbb{P}^2}+\mathcal{D}\right).
\]
Then $D_Z$ is effective and
it follows from \cite{kawakita2007inversion} that $(Z,D_Z)$ is strictly lc around $\pi_2^*E_1$. It is then routine to check that $(\P^2,\mathcal{D})$ is lc for any choice of $H_1$, $H_2$, and $H_3$, which completes the proof.
\end{proof}

We now relate the stability of a point $\N\in \mathcal{X}_{2,2,3}$ to the stability of the corresponding discriminant curve. We first prove the following.

\begin{prop}\label{prop: stab_quartic_net}
    Let $\N\in \mathcal{X}_{2,2,3}$ be a net of quadrics in $\mathbb{P}^3$ and denote its discriminant by $\Delta(\N)$. If $\mathcal{N}$ is GIT (semi)stable, then $\Delta(\N)$ is GIT (semi)stable. 
\end{prop}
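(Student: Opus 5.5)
We argue by contraposition via the Hilbert–Mumford criterion, using that the discriminant map $\N\mapsto\Delta(\N)$ is $\SL(4)\times$(change of basis of the net) equivariant. Suppose $\Delta(\N)$ is unstable (resp. not stable) as a plane quartic in $|\N|\cong\P^2$. If $\Delta(\N)=\P^2$, i.e. every member of $\N$ is singular, then by Bertini the singular points of general members lie in the base locus. A direct normal-form argument then produces a one-parameter subgroup of $\SL(4)$ destabilizing $\N$; for instance, a common singular point, or a rank condition, forces a large block of zeros in the matrices $A_i$. We treat this degenerate case separately at the end. Otherwise, there is a one-parameter subgroup $\rho=\mathrm{Diag}(\beta_1,\beta_2,\beta_3)$ of $\SL(3)$, acting on the coordinates $(\lambda,\mu,\nu)$ of the net, with $\mu(\Delta(\N),\rho)<0$ (resp. $\le 0$). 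Equivalently, by Lemma \ref{hypersurftoric}, there is a toric divisor $E$ over $|\N|$ with $A(E)/\ord_E\Delta(\N)<3/4$ (resp. $\le$).

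The key step is to lift this degeneration of the net of matrices $\lambda A_1+\mu A_2+\nu A_3$ to a one-parameter subgroup of $\SL(4)$ acting on $\P^3$. We use the identity $\det(g^tAg)=\det(g)^2\det A$: a $\lambda'\in\SL(4)$ acts on each generator, and hence on the discriminant, without changing $\det$. So we cannot lift $\rho$ directly. Instead we work on the Plücker/weight level. We choose a basis of $V_\N$ and coordinates on $\P^3$ simultaneously diagonalizing a destabilizing torus. Then we compare the weights of the monomials $\lambda^a\mu^b\nu^c$ in $\det(\sum\lambda_iA_i)$ with the weights of the entries of the $A_i$ under $\lambda'=\mathrm{Diag}(\alpha_0,\ldots,\alpha_3)$. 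Each monomial in the determinant comes from a product $\prod_k (A_{i_k})_{k\sigma(k)}$, whose $\lambda'$-weight is $\sum_k(\alpha_k+\alpha_{\sigma(k)})=0$. So the contributions to the Hilbert–Mumford weight of $\N$ are governed by the entries of the $A_i$. The cleaner route is the one already set up: by Theorem \ref{thm:stab_linear_systems} it suffices to find generators $Q_1,Q_2,Q_3$ with $Q_1+Q_2+Q_3$ a non-(semi)stable sextic surface, i.e. a toric divisor $F$ over $\P^3$ with $A_{\P^3}(F)<\tfrac{2}{3}\ord_F(Q_1+Q_2+Q_3)$ (resp. $\le$).

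To produce $F$ we follow the local structure of the destabilizing singularity of $\Delta(\N)$. By Mumford's description of unstable quartics, either $\Delta(\N)$ has a triple point, or a tacnode with a line component / $\mathbf{A}_{\ge 7}$-type degeneracy, or it is non-reduced; the non-stable cases add $\mathbf{A}_3$ and worse. Proposition \ref{prop:good--ADE--corresp} and its proof show which configurations of the net force these: two members singular at a common base point give a double line, and a fixed curve gives a quadruple point. We would run through the cases of Appendix-style normal forms for $\N$ whose discriminant has such a point at $(1:0:0)$. In each case we read off from the matrix shape (zeros in prescribed rows and columns, as in \eqref{eq:matrix_quadruple_pt}) a weight vector $(\alpha_i)$ for which $\mu(\N,\lambda')\le0$. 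The most efficient way to organize this is via Corollary \ref{cor:main} style bookkeeping: the Hilbert–Mumford weight of $\N$ under $\lambda'$ equals minus the sum of the minimal $\lambda'$-weights of a weight-adapted basis of $V_\N$. A weight-adapted basis is exactly one in which the $A_i$ have the vanishing patterns above.

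The main obstacle is this case analysis, namely showing that every destabilizing configuration of $\Delta(\N)$ is induced by a destabilizing configuration of $\N$. For reduced $\Delta(\N)$ with only ADE singularities the net is good, and we could instead use the chain through $G(\N,p)$ and log crepancy. For non-good nets, however, no such birational tool is available. I would therefore first try to handle all non-good nets uniformly by showing they are unstable. This follows for common singular points at a base point via the explicit block form, and for fixed curves via \eqref{eq:matrix_quadruple_pt}. What remains is the double-conic discriminant case, which I expect to need the explicit classification of unstable nets from the appendix.
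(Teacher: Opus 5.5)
\textbf{Gap: the step that proves the proposition is deferred, and the substitutes you propose fail.} The proposition amounts to turning a non-stable (resp.\ unstable) singularity of $\Delta(\N)$ into a one-parameter subgroup $\lambda'$ of $\SL(4)$ with $\mu(\N,\lambda')\le 0$ (resp.\ $<0$). You call this ``the main obstacle'' and leave it undone.

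\emph{The chain through $G(\N,p)$ runs the wrong way.} Corollary \ref{cor--quartic-to-net-cubics}, Proposition \ref{prop:cubics-to-quadrics:git} and \cite[Corollary 4.1]{hz} give $\Delta(\N)$ (semi)stable $\Rightarrow$ $G(\N,p)$ (semi)stable $\Rightarrow$ $(\P^3,\tfrac{2}{3}(Q_1+Q_2+Q_3))$ klt (lc) $\Rightarrow$ $\N$ (semi)stable. That is the converse of what you need. Reversing the last arrow would require a non-lc pair on $\P^3$ to be destabilized by a \emph{toric} valuation, and no three-dimensional analogue of Theorem \ref{thm--cubics} is available. In fact the paper gets $(1)\Rightarrow(2)$ of Theorem \ref{thm--equiv.good.nets} for good nets from this very proposition.

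\emph{Equivariance also points the wrong way.} Equivariance of $\N\mapsto\Delta(\N)$ by itself yields only the converse, since $\SL(3)$-invariants of quartics pull back under $\det$ to $\SL(4)$-invariants of nets. Your computation of $\lambda'$-weights of the terms of $\det$ only re-proves $\SL(4)$-invariance and says nothing about $\mu(\N,\lambda')$.

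\emph{Non-good nets need not be unstable.} The net of quadrics through a twisted cubic has a base curve along which no member is singular. This is Case II of Proposition \ref{prop:good--ADE--corresp}, so \eqref{eq:matrix_quadruple_pt} does not apply. Its discriminant is a double smooth conic, and the net is semistable, e.g.\ by Corollary \ref{cor: unstable nets discriminants}. For double-conic discriminants what must be shown is ``not stable''. The appendix classification of \emph{unstable} nets cannot help there; the paper uses it only for the converse direction.

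\emph{Smaller points.} Corollary \ref{cor:main} is deduced from this proposition, so it cannot be invoked (the Pl\"ucker weight formula you attach to it is standard, though). The case $\Delta(\N)=\P^2$ is promised but never treated.

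\textbf{The missing idea} is a local dictionary between the singularity of $\Delta(\N)$ and the matrix entries. The paper moves a bad point of $\Delta(\N)$ to $(0:0:1)$ so that the equation has the shape \eqref{eq--delta(N)}. That point then has multiplicity $\ge2$ with quadratic part $a_0\lambda^2$, i.e.\ it is an $\mathbf{A}_{\ge3}$ point or worse. This covers every non-stable quartic, including $\Delta(\N)=\P^2$. The paper lets $Q_1$ be the corresponding singular member and splits according to $\rank Q_1\in\{1,2,3\}$. Vanishing of specific coefficients of the determinant then forces zero patterns in $A_2,A_3$, in coordinates adapted to $Q_1$.

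\begin{itemize}
\item \emph{Rank three.} Suppose $Q_1$ has vertex $(0:0:0:1)$. The linear term of $\det(A_1+sA_2+tA_3)$ is a nonzero cofactor times $(sA_2+tA_3)_{44}$. So singularity of $\Delta(\N)$ at $[Q_1]$ means the vertex is a base point. The absence of the remaining low-order terms kills further entries, which yields $\mu(\N,\mathrm{Diag}(3,1,-1,-3))\le0$.
\item \emph{Rank one.} Handled by $\mathrm{Diag}(3,-1,-1,-1)$.
\item \emph{Rank two.} Handled by $\mathrm{Diag}(1,1,-1,-1)$ or $\mathrm{Diag}(1,0,0,-1)$.
\item \emph{Unstable case} ($a_0=0$ or $a_2=a_3=0$). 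One either finds a subgroup with $\mu<0$ directly, or degenerates along a weight-zero subgroup and finds one of strictly negative weight on the limit net. One then concludes by \cite[Propositions 2.2 and 2.3]{GIT}.
\end{itemize}

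Your remark about reading a weight vector off the matrix shape points in this direction. Without the coefficient-to-entry translation and the rank case analysis, however, there is no argument.
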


\begin{proof}
We will prove the contrapositive statement. Let $Q_1$, $Q_2$, and $Q_3$ denote the three quadrics in $\mathcal{N}$ corresponding to the points $(0:0:1)$, $(1:0:0)$ and $(0:1:0)$, respectively. Without loss of generality, we may assume that the point $(0:0:1)$ lies in $\Delta(N)$ and $Q_1$ is singular, and thus that the rank of $Q_1$ is at most three.

We first exhibit an equation for $\Delta(\N)$ under the assumption that $\Delta(\N)$ is not stable. Recall that $\Delta(\mathcal{N})$ is not GIT stable if and only if $\Delta(\mathcal{N})$ is not reduced or it has a singularity worse than $\mathbf{A}_2$ or at least a triple singularity (cf.~\cite[Section 1.12]{mumford}).
Up to a change of the projective coordinates $(\lambda:\mu:\nu)$ of $\mathbb{P}^2$, we may assume that the polynomial defining $\Delta(\mathcal{N})$ is expressed as 
\begin{equation}\label{eq--delta(N)}
a_0\lambda^2\nu^2+a_1\lambda\mu^3+a_2\mu^4+a_3\lambda\mu^2\nu+a_4\lambda^2\mu^2+a_5\lambda^3\nu+a_6\lambda^4+a_7\lambda^3\mu+a_8\lambda^2\mu\nu.
\end{equation}
We note that the one-parameter subgroup $\rho:=\mathrm{Diag}(1,0,-1)$ degenerates \eqref{eq--delta(N)} to $a_0\lambda^2\nu^2+a_2\mu^4+a_3\lambda\mu^2\nu$ with the Hilbert--Mumford weight zero if $a_0a_2a_3\ne0$.
In this case, the GIT semistability of $\Delta(\N)$ is equivalent to that of the curve defined by $a_0\lambda^2\nu^2+a_2\mu^4+a_3\lambda\mu^2\nu$ by \cite[Propositions 2.2 and 2.3]{GIT}.
If $a_0=a_2=a_3=0$, then $\mu(\Delta(\N),\rho)<0$ or $\Delta(\N)=\P^2$, and hence $\Delta(\N)$ is GIT unstable.
Therefore, $\Delta(\N)$ is GIT unstable if and only if $a_0=0$ or $a_2=a_3=0$ in the expression of \eqref{eq--delta(N)}.

Then consider the three possible ranks of $Q_1$ separately. If $\rank Q_1=1$, the following argument shows that $\mathcal{N}$ is unstable. We may choose coordinates $(x:y:z:w)$ in $\P^3$ such that $Q_1$ is given by $x^2=0$. Then it is easy to check that the one-parameter subgroup $\rho_1:=\mathrm{Diag}(3,{-1},{-1},{-1})$ satisfies that $\mu(\N,\rho_1)<0$ since $\mu(\N,\rho_1)\le \mu(Q_1,\rho)+\mu(Q_2,\rho)+\mu(Q_3,\rho)$ (cf.~\cite[Proposition 2.1 and Lemma 3.1]{hz}). Moreover, in this case, it is routine to check that $\Delta(\mathcal{N})$ has a singularity with multiplicity at least three and hence is unstable. 
Therefore, it suffices to deal with the cases where $2\le\rank Q_1\le3$.

 If $\rank Q_1=3$, we can assume that the point $(0:0:0:1)$ is the unique singularity of $Q_1$ and, with notations as in the proof of Proposition \ref{prop:good--ADE--corresp}, that  

\begin{align*}
       A_1=\begin{pmatrix}
           * & * & * & 0 \\ 
           * & * & 0 & 0 \\ 
           * & 0 & 0 & 0 \\  
           0 & 0 & 0 & 0
        \end{pmatrix},
        A_2=\begin{pmatrix}
           * & * & * & * \\ 
           * & * & * & * \\ 
           * & * & * & 0\\  
           * & * & 0 & 0
        \end{pmatrix},
        A_3=\begin{pmatrix}
           * & * & * & * \\ 
           * & * & * & 0 \\ 
           * & * & 0 & 0 \\  
           * & 0 & 0 & 0
        \end{pmatrix},
        \end{align*}
observing equation \eqref{eq--delta(N)}. Then the one-parameter subgroup $\rho_2:=\mathrm{Diag}(3,1,{-1},{-3})$ has $\mu(\N,\rho_2)\leq 0$. Similarly, if $\rank\,Q_1=2$, then we may assume that 
\begin{equation}A_1=
\begin{pmatrix}
           0 & 1 & 0 & 0 \\ 
           1 & 0 & 0 & 0 \\ 
           0 & 0 & 0 & 0 \\  
           0 & 0 & 0 & 0
\end{pmatrix}.\label{eq-a_1}
\end{equation}          
Then, since the coefficients of $\mu\nu^3$, $\mu^2\nu^2$, and $\mu^3\nu$ in the equation \eqref{eq--delta(N)} of $\Delta(\mathcal{N})$ are zero, we may assume that
\begin{equation}
A_3=\begin{pmatrix}
           * & * & * & a \\ 
           * & * & * & b \\ 
           * & * & c & 0 \\  
           a & b & 0 & 0
\end{pmatrix},\label{eq-a_3}
\end{equation}  
with $abc=0$. In particular, up to a further change of coordinates, we may assume that either $b=0$ or $c=0$.
If $c=0$, we see that the one parameter subgroup $\rho_3:=\mathrm{Diag}(1,1,{-1},{-1})$ has $\mu(\N,\rho_3)\leq 0$. Furthermore, if $c\ne0$ and $b=0$, since the coefficient of $\lambda\mu\nu^2$ in the equation of $\Delta(\mathcal{N})$ vanishes, then the point $(0:0:0:1)$ lies in $Q_2$. In this case, the one-parameter subgroup $\rho_4:=\mathrm{Diag}(1,0,0,{-1})$ satisfies that $\mu(\N,\rho_4)\le0$.

Finally, suppose that $\Delta(\mathcal{N})$ is unstable. In this case, we have that $a_2=a_3=0$ or $a_0=0$ in \eqref{eq--delta(N)} as already observed, and we claim that $\N$ is unstable.
Since we have already considered the case where $\rank Q_1=1$, two possibilities remain. 
If $\rank Q_1=2$, we use \eqref{eq-a_1} and \eqref{eq-a_3}. First, consider the case where $c=0$ and $a_0=0$. If $\mu(\N,\rho_3)= 0$, we see that $\rho_3$ degenerates $Q_2$ to a double hyperplane. Thus, $\rho_3$ degenerates $\mathcal{N}$ to an unstable net by the discussion above. Therefore, $\N$ is unstable in this case because the GIT stability of $\N$ and that of this net are equivalent by \cite[Propositions 2.2 and 2.3]{GIT} in this case.
Next, consider the case where $a_2=a_3=c=0$.
After a suitable change of coordinates of $\P^3$, we may assume that 
\begin{equation*}
A_3=\begin{pmatrix}
           * & * & a'& 0 \\ 
           * & * & b' & 0 \\ 
           a' & b' & 0 & 0 \\  
           0 & 0 & 0 & 0
\end{pmatrix},
A_2=\begin{pmatrix}
           * & * & *& * \\ 
           * & * & * & * \\ 
           * & * & * & * \\  
           * & * & * & d
\end{pmatrix}
\end{equation*} 
with $a'b'd=0$.
If $d=0$, then $\mu(\N,\rho_5)<0$, where $\rho_5:=\mathrm{Diag}(1,1,1,{-3})$.
Otherwise, we may assume that $b'=0$ and replace $\N$ with the degeneration of $\N$ by $\rho_3$. Then $\mu(\N,\rho_4)=0$ and $\rho_4$ degenerates $Q_2$ to a double hyperplane.
Thus, $\N$ is unstable again.
Next, consider the case where $c\ne0$ and $a_0=0$.
Then, we see that
\begin{equation*}
A_2=\begin{pmatrix}
           * & * & *& * \\ 
           * & * & * & * \\ 
           * & * & * & 0 \\  
           * & * & 0 & 0
\end{pmatrix}.
\end{equation*}
Setting $\rho_6:=\mathrm{Diag}(1,1,0,-2)$, $\mu(\N,\rho_6)\le0$ and if $\mu(\N,\rho_6)=0$, then $\N$ degenerates to a net generated by quadrics defined by equations $xy$, $xw$ and $yw$, respectively, where we use the canonical projective coordinate $(x:y:z:w)$ of $\P^3$.
This net is clearly unstable, and therefore $\N$ is unstable.
Finally, we deal with the case where $a_2=a_3=0$ but $c\ne0$.
In this case,
\begin{equation*}
A_3=\begin{pmatrix}
           * & * & *& a \\ 
           * & 0 & 0 & 0 \\ 
           * & 0 & c & 0 \\  
           a & 0 & 0 & 0
\end{pmatrix},
A_2=\begin{pmatrix}
           * & * & *& * \\ 
           * & * & * & 0 \\ 
           * & * & * & * \\  
           * & 0 & * & 0
\end{pmatrix}.
\end{equation*}
If $\mu(\N,\rho_4)=0$, then $\rho_4$ degenerates $\N$ to a net generated by three quadrics defined by equations $xy$, $2axw+cz^2$ and $zw$, respectively.
Since $\mathrm{Diag}(1,-3,1,1)$ has the negative Hilbert-Mumford weight of this net, $\N$ is unstable.
If $\rank Q_1=3$ and $\mu(\N,\rho_2)=0$, we can see that the degeneration $\N'$ of $\N$ by $\rho_2$ is unstable as follows. 
Let $Q'_i\in |\N'|$ be the degeneration of $Q_i$ for $i=1,2,3$. 
Note that $Q_1=Q_1'$.
When $a_0=0$, we see that $\rank Q'_3=1$ and $\N'$ is unstable.
When $a_2=a_3=0$, $Q'_1=\{\alpha xz+\beta y^2=0\}$, $Q'_2=\{xw=0\}$ and $Q'_3=\{yw=0\}$ for some $\alpha,\beta\in\mathbb{C}$ unless $\rank Q'_3=1$. It is easy to check that $\N'$ is unstable considering $\mathrm{Diag}(0,-1,-2,3)$ in this case, and therefore $\N$ is unstable by \cite[Propositions 2.2 and 2.3]{GIT}.
In any case, the conclusion is that $\N$ is unstable as claimed.
\end{proof}

\begin{rmk}\label{rmk-lz}
    Yuchen Liu and Junyan Zhao informed us that they have also independently obtained this result \cite{YuchenJunyan}. 
\end{rmk}

We will now show that the converse statement also holds if one restricts to good nets of quadrics $\N$ as in Definition \ref{def:good}. This will follow from combining Proposition \ref{extended}, Corollary \ref{cor--quartic-to-net-cubics}, and the following result, which relates the stability of $\N$ to that of $G(\N,p)$.

\begin{prop}\label{prop:cubics-to-quadrics:git}
Let $\mathcal{N}$ be a good net of quadrics in $\mathbb{P}^3$ and let $Q_1$ be a general element of $\mathcal{N}$ with a base point $p\in\mathcal{N}$ in $\P^3$.
Suppose that $G(\mathcal{N},p)$ is GIT stable (resp.~semistable).
Then for any choice of $Q_2$ and $Q_3$ such that $Q_1$, $Q_2$, and $Q_3$ generate $\mathcal{N}$, the pair $(\mathbb{P}^3,\tfrac{2}{3}(Q_1+Q_2+Q_3))$ is klt (resp.~lc). 
\end{prop}

\begin{proof}
     Let $g\colon \mathrm{Bl}_p(\mathbb{P}^3)\to \mathbb{P}^3$ be the blowing up at $p$ with the exceptional divisor $E$ and $q\colon \mathrm{Bl}_p(\mathbb{P}^3)\to \mathbb{P}^2$ the projection of lines from $p$.
    Set $Q'_i:=g^*Q_i-E$ and $C_{ij}=Q'_i\cap Q'_j$.
    By the proof of Proposition \ref{qtoc}, we see that $C_{12}$, $C_{23}$, and $C_{31}$ generate $G(\mathcal{N},p)$.
    Suppose that $G(\mathcal{N},p)$ is GIT stable (resp.~semistable).
    By Corollary \ref{cor--cubic--linear--sys}, then the log pair $(\mathbb{P}^2,\tfrac{1}{3}(C_{12}+C_{23}+C_{31}))$ is klt (resp.~lc). 

We note that all fibers of $q$ are isomorphic to $\mathbb{P}^1$.
Moreover, $Q_1'$, $Q_2'$, and $Q_3'$ generate a net with no fixed part.
This shows that $q^{-1}(x)\cap Q'_1$, $q^{-1}(x)\cap Q'_2$, and $q^{-1}(x)\cap Q'_3$ are all distinct points for any general closed point $x\in\mathbb{P}^2$. 
Therefore, any general fiber of $q$ is klt and isomorphic to each other as log pairs.
This means that $(\mathrm{Bl}_p(\mathbb{P}^3),\tfrac{2}{3}(Q'_1+Q'_2+Q'_3))\to \mathbb{P}^2$ is a lc-trivial fibration in the sense of \cite[Definition 2.1]{ambro2004shokurov} with every two general fibers isomorphic to each other. This shows that the moduli $\mathbb{Q}$-{\it b}-divisor on $\P^2$ in the sense of \cite[Definition 2.3]{ambro2004shokurov} is $\mathbb{Q}$-linearly trivial by \cite[Lemma 5.2 and Theorem 4.4]{ambro2004shokurov} and \cite[Propositions 3.1 and 4.4]{ambro2005moduli}.
 Set $B$ as an effective $\mathbb{Q}$-divisor on $\P^2$ such that the multiplicity of $B$ along $F$ is
\[
1-\sup\left\{a\in\mathbb{Q}\mid\left(\mathrm{Bl}_p(\mathbb{P}^3),\tfrac{2}{3}\left(Q'_1+Q'_2+Q'_3\right)+aq^*F\right)\textrm{ is lc around $q^{-1}(\eta_F)$} \right\}
\]
for any prime divisor $F$ on $\mathbb{P}^2$ with the generic point $\eta_F$. Note that $B$ is nothing but the discriminant $\mathbb{Q}$-$b$-divisor in the sense of \cite[Definition 2.3]{ambro2004shokurov}.
 Then $B\sim_{\mathbb{Q}}-K_{\P^2}$, and  $(\mathrm{Bl}_p(\mathbb{P}^3),\tfrac{2}{3}(Q'_1+Q'_2+Q'_3))$ is klt (resp.~lc) if and only if so is $(\mathbb{P}^2,B)$ by \cite[Theorem 3.1]{ambro2004shokurov}.

We claim that $B-\tfrac{1}{3}(C_{12}+C_{23}+C_{31})$ is effective.
Since $B$ is effective, it suffices to check the multiplicity of $B$ along the irreducible components of $C_{12}+C_{23}+C_{31}$.
Let $F$ be an irreducible component of $C_{12}+C_{23}+C_{31}$ with $\mathrm{ord}_F(C_{12}+C_{23}+C_{31})=m$.
We note that $1\le m\le 3$ since $(\mathbb{P}^2,\tfrac{1}{3}(C_{12}+C_{23}+C_{31}))$ is lc.
Without loss of generality, we may assume that $F\subset C_{23}$. Then the reduced structure of $Q_2'\cap Q_3'\cap q^*F$ is generically a section over $F$ not contained in $Q'_1$ since $\N$ is good. We may also assume that $q^*F\not\subset  Q'_2$. Then $F\not\subset C_{12}$, and $F\subset C_{31}$ if and only if $q^*F\subset Q'_3$.
In the case where $m=1$, $Q'_2$ and $Q'_3$ intersect transversely around the generic fiber of $q^{-1}(F)$.
In this case, it is easy to see that $\mathrm{ord}_F(B)\ge\tfrac{1}{3}$.
Similarly, it is easy to see that $\mathrm{ord}_F(B)\ge\tfrac{m}{3}$ for the cases where $m=2$ and $m=3$ if $F\not\subset C_{31}$. If $F\subset C_{31}$, then we see that $Q'_1\cap q^*F$ and $Q'_2\cap q^*F$ are generically sections over $F$ different from each other and hence $m=2$ and $\mathrm{ord}_F(B)\ge\tfrac{2}{3}$.
Therefore, $B-\tfrac{1}{3}(C_{12}+C_{23}+C_{31})$ is effective.

Since $\tfrac{1}{3}(C_{12}+C_{23}+C_{31})\sim_{\mathbb{Q}}-K_{\mathbb{P}^2}$, we see that $B=\tfrac{1}{3}(C_{12}+C_{23}+C_{31})$.
Therefore,   $\left(\mathrm{Bl}_p(\mathbb{P}^3),\tfrac{2}{3}\left(Q'_1+Q'_2+Q'_3\right)\right)$ is klt (resp.~lc).
Since this log pair is log crepant to $\left(\mathbb{P}^3,\tfrac{2}{3}(Q_1+Q_2+Q_3)\right)$, we obtain the assertion.
\end{proof}

As yet another consequence, we obtain the following complete criterion for GIT stability of good nets of quadric surfaces, thus completing the proof of Theorem \ref{thm:main}.

\begin{thm}\label{thm--equiv.good.nets}
    Let $\mathcal{N}$ be a net of quadrics in $\mathbb{P}^3$ such that $\Delta(\mathcal{N})$ is a reduced curve with only ADE singularities.
    Then, the following four statements are equivalent:
    \begin{enumerate}
        \item $\mathcal{N}$ is GIT stable (resp.~GIT semistable);
        \item $\Delta(\mathcal{N})$ is GIT stable (resp.~GIT semistable); 
        \item for some (and hence any) choice of a base point $p\in \P^3$ of $\mathcal{N}$, the net of cubics $G(\mathcal{N},p)$ is GIT stable (resp.~GIT semistable); and
    \item if $Q_1$ is a general element of $|\mathcal{N}|$, then for any choice of two other generators $Q_2$ and $Q_3\in |\mathcal{N}|$, the pair  $(\mathbb{P}^3,\tfrac{2}{3}(Q_1+Q_2+Q_3))$ is klt (resp.~lc).
    \end{enumerate} 
\end{thm}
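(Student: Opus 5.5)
The plan is to close a cycle of implications, (1)$\Rightarrow$(2)$\Rightarrow$(3)$\Rightarrow$(4)$\Rightarrow$(1), after first noting that $\N$ is good. Since $\Delta(\N)$ is reduced with only ADE singularities, Proposition \ref{prop:good--ADE--corresp} shows that $\N$ is good. Hence for every $\P^3$-base point $p$ the net $G(\N,p)$ is defined and two-dimensional (Proposition \ref{qtoc}). By Proposition \ref{extended}, the branch quartic $B$ of the associated weak del Pezzo surface $Y$ is identified with $\Delta(\N)$ under $|G(\N,p)|^\vee\cong|\N|$.

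\textbf{(1)$\Rightarrow$(2).} This is exactly Proposition \ref{prop: stab_quartic_net}, in both the stable and the semistable case.

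\textbf{(2)$\Rightarrow$(3).} Fix any $\P^3$-base point $p$ and apply Corollary \ref{cor--quartic-to-net-cubics} to $\mathcal{M}=G(\N,p)$ with $B=\Delta(\N)$. Because the hypothesis in (2) does not depend on $p$, this proves (3) for every choice of $p$. This also justifies the phrase ``some (and hence any)'', provided the cycle closes starting from a single $p$.

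\textbf{(3)$\Rightarrow$(4).} Assume (3) for some $p$. Proposition \ref{prop:cubics-to-quadrics:git} then gives that $(\P^3,\tfrac{2}{3}(Q_1+Q_2+Q_3))$ is klt (resp.~lc) for general $Q_1$ and any completing generators $Q_2,Q_3$.

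\textbf{(4)$\Rightarrow$(1).} This is the step that needs real work. By Theorem \ref{thm:stab_linear_systems} combined with Lemma \ref{hypersurftoric}, $\N$ is unstable (resp.~not stable) iff, for some choice of generators $Q_1,Q_2,Q_3$, the sextic $Q_1+Q_2+Q_3$ admits a toric divisor $E$ with
\[
A_{\P^3}(E)<\tfrac{2}{3}\mathrm{ord}_E(Q_1+Q_2+Q_3)\quad(\text{resp.}~\le).
\]
Condition (4), however, only controls triples in which $Q_1$ is general. To bridge this, I will use the reduction from \cite[Lemma 3.2 and Theorem 3.3]{hz}. For a fixed one-parameter subgroup $\lambda$ (equivalently, a toric valuation $v$), the minimal Hilbert--Mumford weight $\mu(\N,\lambda)$ is attained by a triple adapted to the weight filtration on $V_\N$. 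In such a triple one generator may be taken to be general in its weight piece.

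The main obstacle is that the general member of $\N$ need not lie in the smallest $v$-order piece. The plan is to compare through the filtration. Take generators ordered so that $v(Q_1)\le v(Q_2)\le v(Q_3)$, with $v(Q_1)=\min_{Q\in\N}v(Q)$, which is the value a general $Q$ attains. Then $v(Q_1+Q_2+Q_3)$ is both the maximum over generating triples and the value of an adapted triple with $Q_1$ general. This shows that a toric $E$ violating the inequality gives a non-klt (resp.~non-lc) pair $(\P^3,\tfrac{2}{3}(Q_1+Q_2+Q_3))$ with $Q_1$ general, contradicting (4).

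For the stable case there is a further wrinkle: equality $A=\tfrac{2}{3}\mathrm{ord}_E$ must be excluded, and by (4) this holds because the pair is klt. If this valuation-filtration argument proves delicate, the fallback is to route (4)$\Rightarrow$(3) directly. By the Ambro lc-trivial fibration argument in the proof of Proposition \ref{prop:cubics-to-quadrics:git}, run in reverse, the klt/lc property of the $\P^3$ pair is equivalent to that of $(\P^2,\tfrac{1}{3}(C_{12}+C_{23}+C_{31}))$. Since the pairs $C_{12},C_{23},C_{31}$ arising this way realize all generating triples of $G(\N,p)$, Corollary \ref{cor--cubic--linear--sys} would then give (3). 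After that, (1) follows from (3) by showing (3)$\Rightarrow$(1) via (4) and Theorem \ref{thm:stab_linear_systems} in the same manner.
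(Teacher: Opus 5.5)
Your proposal is correct and runs the same cycle (1)$\Rightarrow$(2)$\Rightarrow$(3)$\Rightarrow$(4)$\Rightarrow$(1) with the same ingredients as the paper. The only difference is at (4)$\Rightarrow$(1): the paper simply cites \cite[Corollary 4.1]{hz}, while your adapted-basis argument reproves it. Your argument is correct because, for the fixed destabilizing valuation, the $Q_1$ attaining the minimal value form a dense open set, which meets the general locus in (4). The fallback route is unnecessary, and as written it would not work. A generating triple of $G(\N,p)$ corresponds to a triangle of lines in $|\N|$, and that triangle's vertices need not include a general $Q_1$.
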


\begin{proof}
By assumption, note that the net $\mathcal{N}$ is good by Proposition \ref{prop:good--ADE--corresp}. Now, the implication $(4)\Rightarrow (1)$ follows from \cite[Corollary 4.1]{hz}. By Proposition \ref{extended}, the implications $(1)\Rightarrow (2)$ and $(2)\Rightarrow (3)$ follow from Proposition \ref{prop: stab_quartic_net} and Corollary \ref{cor--quartic-to-net-cubics}, respectively.  Finally, the implication $(3)\Rightarrow (4)$ follows from Proposition \ref{prop:cubics-to-quadrics:git}.
      \end{proof}

More generally, we further deduce the following.

    \begin{cor}\label{cor--final}
Let $\mathcal{N}$ be a net of quadrics in $\mathbb{P}^3$.
Then $\mathcal{N}$ is GIT stable (resp.~semistable) if and only if $\Delta(\mathcal{N})$ is also GIT stable (resp.~semistable). 
    \end{cor}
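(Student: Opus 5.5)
One direction is already available. Proposition \ref{prop: stab_quartic_net} shows that if $\N$ is GIT stable (resp.~semistable), then $\Delta(\N)$ is GIT stable (resp.~semistable). The convention that $\Delta(\N)=\P^2$ counts as unstable covers the degenerate case. So only the converse needs proof: assume $\Delta(\N)$ is GIT stable (resp.~semistable) and deduce the same for $\N$. I would split according to whether $\Delta(\N)$ is reduced with only ADE singularities.

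\emph{Case 1: $\Delta(\N)$ is reduced with only ADE singularities.} Here Theorem \ref{thm--equiv.good.nets} applies directly and gives the equivalence $(1)\Leftrightarrow(2)$. This case is immediate.

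\emph{Case 2: $\Delta(\N)$ is GIT semistable but not reduced with only ADE singularities.} By the description of semistable quartics recalled in Section \ref{3git}, a semistable quartic is reduced with at most double points, or it is a double smooth conic. A reduced plane curve with only double points has only $\mathbf{A}_n$ singularities, which are ADE. So the only remaining semistable possibility is that $\Delta(\N)$ is a double smooth conic, and this quartic is strictly semistable, never stable. The stable statement is therefore fully settled by Case 1. What remains is to show that a net whose discriminant is a double smooth conic is GIT semistable.

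For this I would argue by contradiction, using the classification of GIT unstable nets of quadrics (Theorem \ref{unstable nets}, obtained in Appendix \ref{section--Theo's-calculation} via \cite{Pap22,Pap_code}). For each maximal destabilized family in that list, I would compute the discriminant quartic and check that it is never a double smooth conic. I expect the discriminant in these families to be one of three things: identically zero, containing a line with multiplicity, or having a point of multiplicity at least three (as in the rank computations in the proof of Proposition \ref{prop: stab_quartic_net}). In fact this amounts to showing that every unstable net has unstable discriminant. Since a double smooth conic is semistable, such an $\N$ then cannot be unstable.

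The main obstacle is the second case. The birational machinery of Sections \ref{sec:git_cubics}--\ref{sec:new_results} is unavailable there, because $\N$ is not good by Proposition \ref{prop:main}. The argument therefore rests on a case-by-case inspection of the explicit normal forms of unstable nets. If I had to avoid the appendix, I would try to sharpen the last part of the proof of Proposition \ref{prop: stab_quartic_net} to show directly that instability of $\N$ forces $a_0=0$ or $a_2=a_3=0$ in \eqref{eq--delta(N)}. That would give instability of $\Delta(\N)$, contradicting the assumption.
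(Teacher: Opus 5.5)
Your proposal is correct and is essentially the paper's proof: Proposition \ref{prop: stab_quartic_net} gives one direction, stability of $\Delta(\N)$ forces ADE singularities so Theorem \ref{thm--equiv.good.nets} applies, and semistability rests on the appendix classification (Theorem \ref{unstable nets}, packaged as Corollary \ref{cor: unstable nets discriminants}, which also uses closedness of the unstable locus of quartics to pass from general maximally unstable nets to their degenerations); your ADE/double-conic split for semistability is only a minor reorganization of this. One correction to your heuristic: in cases (6) and (8) of Theorem \ref{unstable nets} the discriminant is a cubic plus an inflectional tangent line (reduced, with an $\mathbf{A}_5$ point), which is none of your three expected types, but it is still unstable and hence not a double smooth conic, so the argument goes through.
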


    \begin{proof}
For the semistability, the assertion follows from Proposition \ref{prop: stab_quartic_net} and Corollary \ref{cor: unstable nets discriminants}.
For the strict stability, by Proposition \ref{prop: stab_quartic_net}, it suffices to show that if $\Delta(\N)$ is stable, then $\N$ is stable.  
    By \cite[Proposition 6.3]{ascher2024wall}, $\Delta(\N)$ has only ADE singularities.
    Thus, the assertion follows from Theorem \ref{thm--equiv.good.nets}. 
    \end{proof}

\begin{rmk}
    We note that by Theorem \ref{thm--equiv.good.nets} there are unstable (resp.~non-stable) good nets of quadrics in $\P^3$ as illustrated by Example \ref{exe-E_7} (resp.~Examples \ref{exe-A_4}, \ref{exe-A_5} and \ref{exe-A_6}). By Corollary \ref{cor--final} and the description of the GIT stability of plane quartics (cf.~\cite[Section 1.12]{mumford}), the latter examples are S-equivalent to nets with a discriminant that is a double smooth conic. We also note that a result similar to Corollary \ref{cor--final} holds for nets of conics in $\P^2$ by \cite{wall}, but we cannot always extend it to the higher-dimensional case by \cite[Remarks 3.20 and 3.21]{fedorchuk2013stability}.    
\end{rmk}

\section{Further questions}\label{sec:questions}

We record here a few directions for future research that naturally emerge from our project. In this paper, we provide precise links among three distinct GIT problems by considering a threefold cycle of correspondences arising from a generalization of Gale duality. A first natural question is whether our setting extends to higher dimensions and to other types of linear systems of hypersurfaces. 

\begin{que}\label{quest 1}
    More precisely, are there other relevant geometric examples where the GIT (semi)stability of a linear system of hypersurfaces through a configuration of $s$ points in $\P^r$ coincides with the GIT (semi)stability of a linear system of hypersurfaces through a configuration of $s$ Gale dual points in $\P^{s-r-2}$? Is there also a “discriminant” side? 
\end{que}

Positive answers to Question \ref{quest 1} would naturally lead to considering new explicit moduli problems involving different types of higher-dimensional fibrations and to comparing these moduli spaces.

In a related direction, it would be interesting to extend our findings to the context of K-stability, since K-stability has been highly successful in constructing moduli spaces for Fano varieties (see \cite{Xu2025} for details) and GIT is closely related to K-stability. As shown in \cite{zhou2024log,liu2024non}, the GIT moduli for divisors in a fixed K-polystable Fano variety can be regarded as a K-moduli space of certain log Fano pairs. In addition, explicit descriptions of K-moduli spaces are largely obtained by studying and classifying GIT quotients and identifying them with K-moduli spaces. For example, it is observed in \cite{mabuchi_mukai_1990, SSY, Pap22, MGPZ2024} that GIT moduli or VGIT moduli of nets of quadrics give exact descriptions of K-moduli spaces of specific log Fano pairs. There are also K-moduli wall crossing phenomena, which have been successfully explored as a comparison method between K-moduli and GIT moduli for log Fano pairs in \cite{ascher2024wall,liu2024non}. Despite these developments, a general framework that connects VGIT for linear systems of divisors to K-moduli has not yet been formulated. In line with this, we pose the following question.

\begin{que}
To what extent can our main theorem be generalized to natural wall crossing phenomena in VGIT or K-moduli? 
Can we relate the GIT moduli of linear systems to other significant moduli spaces?
\end{que}

Finally, we observe that good nets of quadric surfaces are also in correspondence with certain rational elliptic threefolds obtained as an eightfold blowing up of $\P^3$ (see Section \ref{sec:rational_elliptic}). We therefore find it appealing to ask the following.

\begin{que}\label{quest 4}
    Can we relate our stability criteria to intrinsic geometric data associated with these elliptic fibrations, such as the types of singular fibers similar to \cite{stabMiranda}? If so, which boundary strata would correspond to the extremal rational elliptic threefolds described in \cite{extremal}?
\end{que}

We hope that the geometric correspondences uncovered in this paper
will serve as a starting point for further interactions between GIT stability, birational geometry, wall crossing, and the explicit geometry of rational elliptic threefolds.

\appendix

\section{\texorpdfstring{GIT unstability of nets of quadrics in $\P^3$}{GIT unstability of nets of quadrics in P3} }\label{section--Theo's-calculation}

In this appendix, we extend the original work of C.T.C Wall in \cite{wall_theta} and provide a more ``geometric'' classification of unstable orbits for the GIT problem associated with the action of $ \SL(4)$ on $\mathcal{X}_{2,2,3}$ (see Section \ref{3git} for the setup). We achieve this using the algorithm from \cite{Pap22} and the computational code \cite{Pap_code}. We summarize below how this process works. A detailed explanation is given in \cite[\S 3.5]{Pap22}.

\subsection{An overview of the algorithm}
In this subsection, we explain the strategy to detect unstable nets of quadrics in $\P^3$ first described in \cite{Pap22}.  Fix a maximal torus $T$ in $\SL(4)$. Then, by \cite[Lemma 3.16]{Pap22}, we can find a finite set $P_{3,2,2}$ (which we will describe explicitly in Lemma \ref{one-ps for nets} below) of one-parameter subgroups of $T$ as \cite[Definition 3.15]{Pap22} that allows us to determine all unstable nets of quadrics with respect to the action of $\SL(4)$. More precisely, for any unstable net $\N$, we can find $g\in \SL(4)$ and $\lambda\in P_{3,2,2}$ such that $\mu(g\cdot \N,\lambda)<0$. 

From now on, we explain the fundamental notations in \cite[Section 3]{Pap22}. Let $\Xi_2$ be the set of monomials of degree two in $\mathbb{C}[x_0,x_1,x_2,x_3]$, which is the projective coordinate ring of $\P^3$ and $T$ acts on $\mathbb{C}[x_0,x_1,x_2,x_3]$ preserving the vector spaces $\mathbb{C}\cdot x_i$ for $i=0,1,2,3$.
We set $$\mathrm{Supp}(f):=\left\{x^I\in \Xi_2\middle|\text{if $f=\sum_{x^K\in \Xi_2}c_Kx^K$, then $c_I\ne0$}\right\}$$ for any homogeneous polynomial $f$ of degree two. 
For each $\lambda \in P_{3,2,2}$, we can set the {\it $\lambda$-order} $\le_{\lambda}$ on $\Xi_2$ as \cite[Section 3.2]{Pap22}.
For each $\lambda \in P_{3,2,2}$ and $x^I$ and $x^J\in \Xi_2$, we can define {\it maximal sets} $N^{-}(\lambda,x^I,x^J)$ (cf.~\cite[Definition 3.25 and Lemma 3.26]{Pap22}) as
\[
N^{-}(\lambda,x^I,x^J):=A\times B\times C\subset (\Xi_2)^3,
\]
where $A:=\{x^K\in\Xi_2|\mu(x^K,\lambda)+\mu(x^I,\lambda)+\mu(x^J,\lambda)<0\}$, $B:=\{x^K\in\Xi_2|x^K\le_{\lambda}x^I\}$, and $C:=\{x^K\in\Xi_2|x^K\le_{\lambda}x^I\}$. Here, we say that $x^I$ and $x^J$ are {\it supporting monomials}.
Note that $N^{-}(\lambda,x^I,x^J)$ are maximal in the sense that for any subsets $A',B'$ and $C'\in \Xi_2$ such that $A\subset A'$, $B\subset B'$ and $C\subset C'$, if $\mu(x^{K_1},\lambda)+\mu(x^{K_2},\lambda)+\mu(x^{K_3},\lambda)<0$ for any $x^{K_1}\in A'$, $x^{K_2}\in B'$ and $x^{K_3}\in C'$, then $A=A'$, $B=B'$ and $C=C'$. In particular, a net of quadrics $\mathcal{N}$ is unstable if and only if up to the action of an element $g\in \SL(4)$, the monomials of the three (fixed) generators of $g\cdot \mathcal{N}$ are contained in a triple $N^{-}(\lambda,x^I,x^J)$ for some $\lambda\in P_{3,2,2}$ and $x^I,x^J\in\Xi_2$ (cf.~\cite[Theorem 3.27]{Pap22}). More precisely, each of the sets $N^{-}(\lambda,x^I,x^J) = A\times B\times C$ defines three generators (up to the action of $\SL(4)$) of a GIT unstable net $\mathcal{N}$ of quadrics  with 
$$V_\mathcal{N} = \{c_1f_1+c_2f_2+c_3f_3\,\mid\,(c_1,c_2,c_3)\in \mathbb{C}^3\},$$
where $f_{1}$, $f_2$ and $f_3$ are homogeneous polynomials of degree two such that $\mathrm{Supp}(f_1)\subset A$, $\mathrm{Supp}(f_2)\subset B$ and $\mathrm{Supp}(f_3)\subset C$. Furthermore, if $\N$ is generated by $f_1$, $f_2$ and $f_3$ such that $\mathrm{Supp}(f_1)=A$, $\mathrm{Supp}(f_2)=B$ and $\mathrm{Supp}(f_3)=C$, then we say that the net $\mathcal{N}$ is \emph{maximally unstable} associated with $N^{-}(\lambda,x^I,x^J)$. 
In particular, classifying these maximal unstable orbits also allows us to determine the semistable orbits of the GIT quotient. In summarizing this, we obtain the following. 

\begin{thm}[{\cite[Theorem 3.17]{Pap22}}]\label{thm: pap paper thm}
    Let $\N$ be a net of quadrics in $\P^3$. Then $\N$ is unstable if and only if there exist generators $Q_1$, $Q_2$, and $Q_3$ for $\N$, $g \in \SL(4)$, $x^I$, $x^J\in \Xi_2$, and $\lambda \in P_{3,2,2}$ such that $g\cdot Q_1$, $g\cdot Q_2$, and $g\cdot Q_3$ are expressed by homogeneous polynomials $f_1$, $f_2$, and $f_3$ in $\mathbb{C}[x_0,x_1,x_2,x_3]$ of degree two with  $\mathrm{Supp}(f_1)\times \mathrm{Supp}(f_2)\times \mathrm{Supp}(f_3)\subset N^{-}(\lambda,x^I,x^J)$.
\end{thm}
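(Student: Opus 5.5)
The plan is to deduce the statement from the Hilbert--Mumford criterion restricted to the fixed torus $T$, together with the finiteness result \cite[Lemma 3.16]{Pap22}. For the ``if'' direction, suppose generators $f_1,f_2,f_3$ of $g\cdot\N$ have supports in $N^{-}(\lambda,x^I,x^J)=A\times B\times C$. Under the Plücker embedding of $\mathcal{X}_{2,2,3}$, the point $g\cdot\N$ is $f_1\wedge f_2\wedge f_3$. This expands as a sum of wedge monomials $x^{K_1}\wedge x^{K_2}\wedge x^{K_3}$ with $x^{K_1}\in A$, $x^{K_2}\in B$, $x^{K_3}\in C$. The $\lambda$-weight of each such term is $\mu(x^{K_1},\lambda)+\mu(x^{K_2},\lambda)+\mu(x^{K_3},\lambda)$, and this is negative by the defining (maximality) property of $N^{-}$. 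Hence every nonzero Plücker coordinate of $g\cdot\N$ has negative weight, so $\mu(g\cdot\N,\lambda)<0$ (up to the paper's sign convention). Therefore $g\cdot\N$, and with it $\N$, is unstable.

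For the ``only if'' direction, I would argue as follows.
\begin{enumerate}[(1)]
\item Assume $\N$ is unstable. By the Hilbert--Mumford criterion, there is a one-parameter subgroup $\lambda'$ with $\mu(\N,\lambda')<0$.
\item Since all maximal tori are conjugate, pick $g$ such that $\lambda'':=g\lambda' g^{-1}$ lies in $T$. Then $\mu(g\cdot\N,\lambda'')<0$.
\item Apply \cite[Lemma 3.16]{Pap22}. The weight $\mu(\cdot,\lambda)$ is piecewise linear in $\lambda$ on the cone of $T$, and the finitely many hyperplanes $\mu(x^K,\lambda)=\mu(x^{K'},\lambda)$ cut it into chambers. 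After conjugating by a Weyl group element, absorbed into $g$, we may replace $\lambda''$ by some $\lambda\in P_{3,2,2}$ in the closure of the same chamber for which the negativity persists. Here $P_{3,2,2}$ is the set of rays of these chambers.
\item Choose generators of $g\cdot\N$ in echelon form with respect to $\le_\lambda$: Gaussian elimination on the $3\times 10$ coefficient matrix ordered by $\lambda$-weight. This produces $f_1,f_2,f_3$ with distinct leading monomials $x^{L_1}>_\lambda x^{L_2}>_\lambda x^{L_3}$, each $f_i$ supported on monomials $\le_\lambda x^{L_i}$.
\item The $\lambda$-weight of $g\cdot\N$ is read off from the extremal Plücker coordinate $x^{L_1}\wedge x^{L_2}\wedge x^{L_3}$. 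So negativity gives $\mu(x^{L_1})+\mu(x^{L_2})+\mu(x^{L_3})<0$.
\item Set $x^I:=x^{L_2}$ and $x^J:=x^{L_3}$. Since $\le_\lambda$ is compatible with weights, every triple of monomials drawn from the three supports has weight sum at most that of the leading triple. Hence the supports lie in a triple of the form $N^{-}(\lambda,x^I,x^J)$, after reordering so that the first factor records the weight constraint.
\end{enumerate}

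The main obstacle is step (3): showing that a finite explicit set $P_{3,2,2}$ suffices, so that negativity for an arbitrary $\lambda''\in T$ transfers to a fundamental one. I would first try the chamber-ray argument. For fixed supports, the weight function is linear on each chamber. A linear function that is negative somewhere on a polyhedral cone is negative at some extremal ray, and the rays are finitely many up to the Weyl group. A secondary subtlety is making the order $\le_\lambda$ total and weight-compatible so that the echelon argument in steps (4)--(6) is valid; for this I would break ties lexicographically.
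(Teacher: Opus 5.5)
Your proposal is correct and follows essentially the route behind \cite[Theorem 3.17]{Pap22} that the paper cites and summarizes: Hilbert--Mumford, conjugation into $T$, reduction to the finite set via \cite[Lemma 3.16]{Pap22}, and then an echelon basis for $\le_\lambda$ whose leading Pl\"{u}cker coordinate computes $\mu$ and places the supports in $N^{-}(\lambda,x^{L_2},x^{L_3})$. One inaccuracy: $P_{3,2,2}$ is not the set of rays of the arrangement $\mu(x^K,\lambda)=\mu(x^{K'},\lambda)$, since for example $\lambda_1=\mathrm{Diag}(21,17,5,-43)$ gives the ten quadratic monomials pairwise distinct weights and so lies inside a chamber. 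This is harmless, because your chamber argument is sound (on an open chamber the echelon pivot triple is constant, so $\mu$ is linear there), and in any case you can invoke \cite[Lemma 3.16]{Pap22} at that step exactly as the paper does.
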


The destabilizing one-parameter subgroups are explicitly described by computer software \cite{Pap_code} using the description in \cite[Definition 3.15]{Pap22}. For convenience, given a one-parameter subgroup $\lambda=\operatorname{Diag}(r_0,\dots,r_3)$ of $T$, we set $\overline{\lambda}=\operatorname{Diag}(-r_3,\dots,-r_0)$. If you take $P_{3,2,2}$ as \cite[Definition 3.15]{Pap22}, then it consists of $293$ elements, which we will not list here. By listing all $N^-(\lambda, x^I,x^J)$,  we see that many of the $293$ elements produce the same $N^-(\lambda, x^I,x^J)$ or that are contained in other maximal sets and can replace $P_{3,2,2}$ with the following set.

\begin{lema}\label{one-ps for nets}
    Any element of $P_{3,2,2}$ is $\lambda_k$ or $\overline{\lambda}_k$, where $\lambda_k$ is one of the following:
    \begin{equation*}
        \begin{split}
            \lambda_1 = \operatorname{Diag}(21, 17, 5, -43), & \quad \lambda_2 = \operatorname{Diag}(9, 1, -3, -7)\\
            \lambda_3 = \operatorname{Diag}(13, -3, -3, -7), & \quad \lambda_4 = \operatorname{Diag}(5,4,-3,-6)\\
            \lambda_5 = \operatorname{Diag}(29,21,-11,-39), & \quad \lambda_6 = \operatorname{Diag}(25,9,-15,-19)\\
            \lambda_7 = \operatorname{Diag}(31,19,-9,-41), & \quad \lambda_8 = \operatorname{Diag}(4, 3, 1, -8)\\
            \lambda_9 = \operatorname{Diag}(5, 1, -3, -3). &\quad 
        \end{split}
    \end{equation*}
\end{lema}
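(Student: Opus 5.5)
The plan is to verify the lemma by an explicit finite enumeration, in the spirit of \cite[\S 3.5]{Pap22}, and then reduce the resulting list. First I will make the set $P_{3,2,2}$ of \cite[Definition 3.15]{Pap22} explicit. Each of its elements is a normalized one-parameter subgroup $\operatorname{Diag}(r_0,r_1,r_2,r_3)$ of $T$ with $r_0\ge r_1\ge r_2\ge r_3$ and $\sum r_i=0$. It is cut out as the unique primitive integral solution of a system of three independent linear equations. Each equation is of the form $\langle r, I\rangle=\langle r, J\rangle$ for pairs of degree-two monomials $x^I,x^J\in\Xi_2$, or is one of the ordering equalities $r_i=r_{i+1}$. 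Running through all such systems with \cite{Pap_code} produces the $293$ elements. The work in this step is bookkeeping: solving each $3\times 3$ system, discarding degenerate or non-dominant solutions, and clearing denominators to a primitive integer vector.

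Second, I will use the symmetry $x_i\mapsto x_{3-i}$. This is conjugation by the longest Weyl element composed with inversion, and it sends $\lambda=\operatorname{Diag}(r_0,\dots,r_3)$ to $\overline{\lambda}=\operatorname{Diag}(-r_3,\dots,-r_0)$. Both the defining equations of $P_{3,2,2}$ and the dominance condition are preserved by this operation, so the set is closed under $\lambda\mapsto\overline{\lambda}$. It therefore suffices to list one representative from each pair $\{\lambda,\overline{\lambda}\}$.

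Third, I will prune the list, as the sentence preceding the lemma indicates. For each $\lambda$ I compute all maximal sets $N^{-}(\lambda,x^I,x^J)$. I discard $\lambda$ whenever every such set coincides with, or is contained in, some $N^{-}(\lambda',x^{I'},x^{J'})$ for another retained $\lambda'$. By \cite[Theorem 3.27]{Pap22} and Theorem \ref{thm: pap paper thm}, the remaining subgroups still detect every unstable net, so $P_{3,2,2}$ may be replaced by this reduced set. The lemma then amounts to checking that each retained element is literally one of $\lambda_1,\dots,\lambda_9$ or its bar. Concretely, this means matching the primitive integer vectors; for example, $\lambda_9=\operatorname{Diag}(5,1,-3,-3)$ is the solution of $r_2=r_3$ together with two monomial-weight equalities.

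The main obstacle is the pruning step. Comparing the containments among all the $N^{-}$ sets is a large combinatorial check. I would carry it out by computer, sorting the maximal sets by inclusion and keeping only the $\lambda$ that contribute some inclusion-maximal set. After that, the final identification with the list of nine subgroups is a direct comparison of vectors.
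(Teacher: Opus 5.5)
Your overall plan is the paper's argument: enumerate $P_{3,2,2}$ from \cite[Definition 3.15]{Pap22} with \cite{Pap_code}, list all maximal sets $N^{-}(\lambda,x^I,x^J)$, discard subgroups whose maximal sets coincide with or are contained in others, and read the lemma as a statement about the pruned set. The problem is your explicit description of $P_{3,2,2}$ in the first step. Implemented as written, it cannot output most of the listed subgroups, so your final comparison of vectors would fail.

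Take $\lambda_1=\operatorname{Diag}(21,17,5,-43)$. The weights of the ten quadratic monomials are $42,38,34,26,22,10,-22,-26,-38,-86$, which are pairwise distinct, and $r_0>r_1>r_2>r_3$. So $\lambda_1$ lies on no wall $\langle r,I\rangle=\langle r,J\rangle$ with $x^I,x^J\in\Xi_2$, and on no wall $r_i=r_{i+1}$. The same check works for $\lambda_4,\dots,\lambda_8$. Also $\lambda_3$ lies only on $r_1=r_2$, since every equality among its quadratic weights is a consequence of that one; so it is not cut out by such walls either.

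The walls that do produce these subgroups involve sums of quadratic weights. For instance, weights of degree-six monomials that are products of three quadratic monomials appear, which is natural since the Hilbert--Mumford weight of a net is read off from its Pl\"{u}cker coordinates. Together with $\sum r_i=0$, $\lambda_1$ is cut out by $\mathrm{wt}(x_0^2\cdot x_0x_1\cdot x_0x_3)=\mathrm{wt}(x_0x_2\cdot x_1x_2\cdot x_2^2)$ and $\mathrm{wt}(x_0^2\cdot x_0x_2\cdot x_0x_3)=\mathrm{wt}(x_1^2\cdot x_0x_1\cdot x_1x_3)$. These are the equations $3r_0+r_3=4r_2$ and $3r_0+r_2=4r_1$.

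Your equation count is also off. Every equation of the kinds you list vanishes at $(1,1,1,1)$, so three independent ones together with $\sum r_i=0$ force $r=0$. A ray is cut out by two such equations plus the trace condition. In your $\lambda_9$ example, $r_2=r_3$ and the single independent equality $2r_1=r_0+r_2$ already suffice.

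Once you replace your description with the actual definition in \cite{Pap22}, which is what \cite{Pap_code} implements, your three steps are the paper's proof. One caution for the pruning step: the bar symmetry only shows that $P_{3,2,2}$ is closed under $\lambda\mapsto\overline{\lambda}$. It does not let you read off the sets $N^{-}(\overline{\lambda},\ldots)$ from those of $\lambda$. Under $x_i\mapsto x_{3-i}$ they correspond to triples with positive $\lambda$-weight, so both members of each pair must be processed. For example, $\overline{\lambda}_2$ gives case (8) of Theorem~\ref{unstable nets}, while $\lambda_2$ gives case (3).
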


\subsection{Stability criteria and the discriminant quartic}
We are now in a position to describe the stability criteria. We first recall some results on the classification of pencils of quadrics.

Let $\calP$ be a pencil of quadrics in $\P^3$ generated by $Q_1$ and $Q_2$ that contains at least one smooth element. 
Assume that $Q_2$ is smooth.
Fix $A_1$ and $A_2\in\mathbb{M}_{4\times4}$ as symmetric matrices that express $Q_1$ and $Q_2$, respectively. 
Consider the determinant polynomial $\det(A_1+\lambda A_2)$ and let $\alpha_i\in\mathbb{C}$ be a root of $\det(A_1+\lambda A_2)$ of multiplicity $e_i$. 
Now we set $2\le h_i\le 4$ as the number such that $\alpha_i$ is also a root for every subdeterminant of $A_1+\lambda A_2$ whose size is at least $5-h_i$.

We then define $l^i_j$ to be the minimum multiplicity of a root $\alpha_i$ for the set of subdeterminants of size $5-j$, for $j =1,\dots ,h_i-1$. We can check that $l^i_j \geq l^{i}_{j+1}$ and define $e^i_j:= l^i_j - l^{i}_{j+1}$. Thus, we obtain
$$\det(A_1+\lambda A_2)=\prod_{i=1}^r\prod_{j = 1}^{h_i-1} (\lambda - \alpha_i)^{e^i_j},$$
where $\alpha_i$'s are roots different from each other.
Then, the \emph{Segre symbol} of the pencil $\mathcal{P}$ is written as $[(e^1_1,\dots e^1_{h_1-1}),\dots,(e^r_1,\dots e^r_{h_r-1}) ].$
The Segre symbol depends only on the isomorphic class of $\calP$ and is independent of the choice of $Q_1$ and $Q_2$.

Segre symbols classify pencils of quadrics up to projective equivalence by encoding the degenerations of the members of the pencil  (cf.~\cite{Hodge_book} and \cite[\S 4.2, Table 2]{Pap22}). For example, a pencil with the Segre symbol $[(3,1)]$ is generated by the quadrics $Q_1 = \{q_1(x_1,x_2,x_3) +x_0x_3=0\}$ and $Q_2 = \{x_3l(x_1,x_2,x_3)=0\}$ and the curve $Q_1\cap Q_2$ has a $\mathbf{D}_4$ singularity. We will use Segre symbols to classify all unstable orbits of the nets of quadrics in $\P^3$.

\begin{thm}\label{unstable nets}
    Let $\mathcal{N} $ be a net of quadrics in $\mathbb{P}^3$ generated by $Q_1$, $Q_2$ and $Q_3$. Then, $\mathcal{N}$ is GIT unstable if and only if the generators $Q_i$ are given as below or as a degeneration of the below (up to a $\SL(4)$-action):
    \begin{enumerate}
    \item $Q_1$ and $Q_2$ are smooth quadrics, and $Q_3$ is a double hyperplane, i.e. the intersection $Q_1\cap Q_2$ is smooth and $Q_i\cap Q_3$ is a double conic;
    \item $Q_1$ is smooth and $Q_2 = H_1\cup H_2$ and $Q_3 = H_1\cup H_3$ are the union of two hyperplanes, such that $Q_1\cap Q_i$ is a curve with a $\mathbf{D}_4$ singularity and $Q_2\cap Q_3 = H_1\cup (H_2\cap H_3)$; 
    \item $Q_1$, $Q_2$ are smooth and $Q_1\cap Q_2$ is the intersection with a $\mathbf{A}_1$ singularity, while $Q_i\cap Q_3$ is the intersection with a $\mathbf{D}_4$ singularity;
    \item $Q_1$ is smooth and $Q_2$, $Q_3$ are cones sharing the same singular point, such that the singular point is contained in $Q_1$;
    \item $Q_1$ is smooth and $Q_2 = H_1\cup H_2$ and $Q_3 = H_3\cup H_4$ are the union of hyperplanes such that $Q_1\cap Q_i$ is a curve with two $\mathbf{A}_1$ singularities;
    \item $Q_1$ is smooth, $Q_2 = H_1\cup H_2$ is a union of two hyperplanes and $Q_3$ is a singular quadric such that $Q_1\cap Q_2$ is a curve with two $\mathbf{A}_1$ singularities and $Q_1\cap Q_3$ is smooth;
    \item $Q_1$ is smooth, $Q_2$ and $Q_3$ are singular quadrics such that $Q_1\cap Q_i$ is  a curve with two $\mathbf{A}_1$ singularities;
    \item $Q_1$ is smooth, $Q_2$ and $Q_3$ are singular quadrics such that $Q_1\cap Q_2$ is a  curve with two $\mathbf{A}_1$ singularities and $Q_1\cap Q_3$ is smooth;
    \item $Q_1$, $Q_2$ are smooth, and $Q_3=H_1\cup H_2$ is a singular quadric such that $Q_2\cap Q_3$ is a double line and two lines in general position and $Q_1\cap Q_3$ is a curve with an $\mathbf{A}_3$ singularity;
    \item $Q_1$ is smooth, $Q_2$ and $Q_3$ are singular quadrics such that $Q_1\cap Q_i$ are curves with $2$ $\mathbf{A}_1$ singularities;
    \item $Q_1$ is smooth and $Q_2 = H_1\cup H_2$ and $Q_3 = H_1\cup H_3$ are the union of two hyperplanes, such that $Q_1\cap Q_i$ is a curve with two $\mathbf{A}_1$ singularities; 
    \item $Q_1$ and $Q_2$ are smooth and $Q_3 = H_1\cup H_2$ is the union of two hyperplanes, such that $Q_1\cap Q_2$ is a smooth curve and $Q_i\cap Q_3$ are reduced curves with an $\mathbf{A}_3$ singularities.
    \end{enumerate}
\end{thm}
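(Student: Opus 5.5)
The classification follows from Theorem \ref{thm: pap paper thm} combined with the reduced list of one-parameter subgroups in Lemma \ref{one-ps for nets}. By that theorem, $\N$ is unstable if and only if, after acting by some $g\in\SL(4)$, it has generators whose supports lie in a maximal set $N^{-}(\lambda,x^I,x^J)$ with $\lambda\in P_{3,2,2}$. By Lemma \ref{one-ps for nets}, it is enough to take $\lambda$ among $\lambda_1,\ldots,\lambda_9$ and their duals $\overline{\lambda}_k$.

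The first step is to enumerate, for each such $\lambda$ and each pair of supporting monomials $x^I,x^J\in\Xi_2$, the triple $A\times B\times C$. This is the enumeration produced by \cite{Pap_code}. I then discard every triple that is contained, up to permutation of the factors and up to the coordinate permutations normalising $T$, in another triple. The maximal triples that survive give the maximally unstable families. Every unstable net is then, up to $\SL(4)$, a specialisation of one of these families: its generators have supports inside the corresponding sets, so it lies in the closure of the family. This explains the phrase ``or as a degeneration of the below'' in the statement.

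The second step is to identify each surviving family geometrically. I take general coefficients on the monomials of $A$, $B$ and $C$ to get generic generators $f_1,f_2,f_3$. For each generator I read off its rank from the symmetric matrix: smooth, a cone, a pair of planes, or a double plane. I also record which coordinate points or lines its singular locus contains. For the pairwise intersections $Q_i\cap Q_j$, I compute the Segre symbol of the pencil $\langle Q_i,Q_j\rangle$ whenever that pencil contains a smooth member, as described just before the statement, and translate it into the singularity type of the curve. For example, $[(3,1)]$ gives a $\mathbf{D}_4$ curve, $[(11)11]$ or $[2,1,1]$ give $\mathbf{A}_1$ singularities, and $[(2,1),1]$ or $[3,1]$ give $\mathbf{A}_3$ or $\mathbf{D}$-type curves. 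When a pencil has no smooth member (both generators singular), I describe the intersection directly. This should match each maximal family with exactly one of the cases (1)--(12). Where two maximal families turn out to be $\SL(4)$-equivalent, I merge them. For the converse, any net of one of the listed forms can be put by an element of $\SL(4)$ into the monomial shape of its family, so it has negative weight for the corresponding $\lambda_k$ and is therefore unstable.

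I expect the main obstacle to be the bookkeeping rather than any single argument. Two things need care. First, the reduction from $293$ subgroups to the nine in Lemma \ref{one-ps for nets} must not lose a maximal family. Second, generic members of each family must really have the stated geometry, and distinct families must really be inequivalent. In particular, one has to confirm that the generic intersection curve has exactly the stated singularity, not something worse forced by the monomial constraints. I would check this by hand on the normal forms, using the rank conditions and Segre symbols, and cross-check each case against the computational output of \cite{Pap_code}.
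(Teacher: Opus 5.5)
Your classification argument is the one the paper uses. Theorem \ref{thm: pap paper thm} and Lemma \ref{one-ps for nets} reduce everything to the list of maximal sets $N^{-}(\lambda,x^I,x^J)$ produced by \cite{Pap_code}. The geometry of a general maximally unstable net is then read off from ranks and Segre symbols of the subpencils.

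Correct your Segre-symbol dictionary, though. The symbol $[3,1]$ gives a cuspidal curve, i.e.\ an $\mathbf{A}_2$ point, and $[2,1,1]$ gives a single node. The two-node cases in the list come from $[(1,1),1,1]$ and $[2,2]$.

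The step that would fail is your converse: ``any net of one of the listed forms can be put by an element of $\SL(4)$ into the monomial shape of its family.'' The geometric descriptions (1)--(12) only describe the general member of each family; they do not characterize it.

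\begin{itemize}
\item Cases (7) and (10) are worded identically, yet they come from different maximal sets, $N^{-}(\lambda_5,x_0x_3,x_0x_2)$ and $N^{-}(\lambda_7,x_0x_3,x_2^2)$.
\item More seriously, a typical net satisfies (8). Let $L$ be a bitangent of the smooth quartic $\Delta(\N)$. Take $Q_1\in L\setminus\Delta(\N)$, let $Q_2$ be a tangency point of $L$, and let $Q_3$ be a general point of $\Delta(\N)$.
\item Then $Q_2$ and $Q_3$ are cones. The pencil $\langle Q_1,Q_2\rangle$ has Segre symbol $[2,2]$, so $Q_1\cap Q_2$ is a twisted cubic plus a secant line, with two $\mathbf{A}_1$ points. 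The pencil $\langle Q_1,Q_3\rangle$ has symbol $[1,1,1,1]$, so $Q_1\cap Q_3$ is smooth.
\item Yet this $\N$ is stable by Theorem \ref{thm--equiv.good.nets}, since its discriminant is a smooth, hence stable, quartic.
\end{itemize}

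So the ``if'' direction cannot be derived from the geometric descriptions. It has to be read for the monomial families, and there it is immediate. By the ``if'' part of Theorem \ref{thm: pap paper thm}, any net whose generators, after some $g\in\SL(4)$, are supported in a maximal set $N^{-}(\lambda,x^I,x^J)$ is unstable. This is how the paper's proof handles it: it only classifies the maximal sets, and the listed geometry serves as a description of general members, not as a criterion.
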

\begin{proof}
Due to Theorem \ref{thm: pap paper thm}, a net $\mathcal{N}$ is unstable if and only if for some $g \in \operatorname{SL}(4)$, $\lambda\in P_{3,2,2}$, $x^I,x^J\in\Xi_2$, and three generators $Q_1$, $Q_2$ and $Q_3$ of $(g\cdot \mathcal{N})$, $\mathrm{Supp}(Q_1)\times\mathrm{Supp}(Q_2)\times \mathrm{Supp}(Q_3)\subset  N^{-}(\lambda, x^I, x^J)$.
Therefore, it suffices to classify all maximal $N^{-}(\lambda, x^I, x^J)$ up to orders of products $A\times B\times C=N^{-}(\lambda, x^I, x^J)$ and extract geometric information of maximally unstable nets.
These maximal sets can be found algorithmically using the computational package \cite{Pap_code}.
Furthermore, to conclude that maximally unstable nets belong to one of the lists, we will use the full classification of intersections of two quadrics in $\mathbb{P}^3$ presented in \cite[\S 4, Table 2]{Pap22} using Segre symbols. 

Let $(\lambda, x^I, x^J) = (\lambda_1, x_0^2,x_0^2)$. Then a general maximally unstable net $\N$ associated with the maximal set $N^{-}(\lambda, x^I, x^J)$ has three quadrics $Q_1$, $Q_2$ and $Q_3$ as generators such that $Q_1$, $Q_2$ are given by polynomials $f_1(x_0,x_1,x_2,x_3)$ and $f_2(x_0,x_1,x_2,x_3)\in V_{\N}$ such that $\mathrm{Supp}(f_1)=\mathrm{Supp}(f_2)=\Xi_2$. Then, $Q_1$ and $Q_2$ are smooth quadrics, and $Q_1\cap Q_2$ is also smooth by \cite[\S 4.2, Table 2]{Pap22}. Furthermore, $Q_3$ is given by the polynomial $f_2^3 = x_3^2$. In particular $Q_3$ is a double hyperplane. 
Let $\langle Q_i,Q_j\rangle$ denote the subpencil generated by $Q_i$ and $Q_j$.
Notice that the pencils $\langle Q_1,Q_3\rangle$ and $\langle Q_2,Q_3\rangle$ both have Segre symbols $[(1, 1, 1), 1]$. 
In particular, $Q_i\cap Q_3$ is a double conic for $i=1$, $2$ by \cite[\S 4.2, Table 2]{Pap22} (see also \cite{Hodge_book}). This case corresponds to (1).

        Similarly, let $(\lambda, x^I, x^J) = (\lambda_1, x_0x_3,x_0^2)$. Then a general maximally unstable net $\N$ associated with the maximal set $N^{-}(\lambda, x^I, x^J)$ has three quadrics $Q_1$, $Q_2$ and $Q_3$ as generators such that  $Q_1$ is smooth and $Q_2 = H_1\cup H_2$ and $Q_3 = H_1\cup H_3$ are the union of two hyperplanes. The classification of intersections follows from \cite[\S 4, Table 2]{Pap22} as in the last paragraph. This case corresponds to (2).

        Now, let $(\lambda, x^I, x^J) = (\lambda_2, x_1x_3,x_0x_3)$. Then a general maximally unstable net $\N$ associated with the maximal set $N^{-}(\lambda, x^I, x^J)$ has three quadrics $Q_1$, $Q_2$ and $Q_3$ as generators such that $Q_1$, $Q_2$ are smooth and $Q_1\cap Q_2$ is a reduced curve with a $\mathbf{A}_1$ singularity, while $Q_i\cap Q_3$ is a reduced curve with a $\mathbf{D}_4$ singularity. This case corresponds to (3).

        Letting $(\lambda, x^I, x^J) = (\lambda_3, x_1^2,x_0x_1)$, a general maximally unstable net $\N$ associated with the maximal set $N^{-}(\lambda, x^I, x^J)$ has three quadrics $Q_1$, $Q_2$ and $Q_3$ as generators such that    $Q_1$ is smooth, and $Q_2$ and $Q_3$ are cones of a rational curve of degree two. This case corresponds to (4).
        
Let $(\lambda, x^I, x^J) = (\lambda_4, x_2^2,x_0^2)$. Then a general maximally unstable net $\N$ associated with the maximal set $N^{-}(\lambda, x^I, x^J)$ has three quadrics $Q_1$, $Q_2$ and $Q_3$ as generators such that $Q_1$, is given by polynomial $f_1(x_0,x_1,x_2,x_3)\in V_{\N}$ such that $\mathrm{Supp}(f_1) = \Xi_2$.

Then $Q_1$ is a smooth quadric. In addition $Q_2$ and $Q_3$ are given by the polynomials $f_2(x_2,x_3)$ and $f_3(x_2,x_3)$, and thus are the unions of two hyperplanes. 
Notice that the pencils $\langle Q_1,Q_3\rangle$ and $\langle Q_1,Q_2\rangle$ both have the same Segre symbols $[(1, 1), 1, 1]$.  In particular, $Q_1\cap Q_i$ is a reduced curve with two $\mathbf{A}_1$ singularities for $i=2$, $3$ by \cite[\S 4.2, Table 2]{Pap22} (see also \cite{Hodge_book}). This case corresponds to (5).

Now let $(\lambda, x^I, x^J) = (\lambda_5,x_2x_3,x_0x_3)$. Then a general maximally unstable net $\N$ associated with the maximal set $N^{-}(\lambda, x^I, x^J)$ has three quadrics $Q_1$, $Q_2$ and $Q_3$ as generators such that $Q_1$ is smooth and $Q_2$, $Q_3$ are singular. Moreover, $Q_1\cap Q_3$ is a smooth curve, $Q_1\cap Q_2$ is a reduced curve with two $\mathbf{A}_1$ singularities (corresponding to a pencil with the Segre symbol $[(1,1),1,1]$) and $Q_2\cap Q_3$ is non-reduced. This case corresponds to (6).

If $(\lambda, x^I, x^J) =(\lambda_5,x_0x_3,x_0x_2)$, then a general maximally unstable net $\N$ associated with the maximal set $N^{\ominus}(\lambda, x^I, x^J)$ has three quadrics $Q_1$, $Q_2$ and $Q_3$ as generators such that $Q_1$ is smooth and $Q_2$, $Q_3$ are singular, and $Q_1\cap Q_i$ is a reduced curve with two $\mathbf{A}_1$ singularities for any $i=2$, $3$ (corresponding to a pencil with the Segre symbol $[2,2]$). This case corresponds to (7).

If $(\lambda, x^I, x^J) =(\overline{\lambda}_2,x_0x_3,x_0x_2)$, then a general maximally unstable net $\N$ associated with the maximal set $N^{-}(\lambda, x^I, x^J)$ has three quadrics $Q_1$, $Q_2$ and $Q_3$ as generators such that $Q_1$ is smooth and $Q_2 = H_1\cup H_2$, $Q_3$ are singular, and $Q_1\cap Q_2$ is a reduced curve with two $\mathbf{A}_1$ singularities (corresponding to a pencil with the Segre symbol $[(1,1),1,1]$), and $Q_1\cap Q_3$ is smooth. This case corresponds to (8).

If $(\lambda, x^I, x^J) =(\lambda_6,x_0x_2,x_0x_2)$, then a general maximally unstable net $\N$ associated with the maximal set $N^{-}(\lambda, x^I, x^J)$ has three quadrics $Q_1$, $Q_2$ and $Q_3$ as generators such that $Q_1$, $Q_2$ are smooth and $Q_3 = H_1\cup H_2$, and $Q_2\cap Q_3$ is a non-reduced curve which is the union of a double line and a third line (corresponding to pencil with Segre symbol $[(2,2)]$), and $Q_1\cap Q_3$ is a reduced curve with an $\mathbf{A}_3$ singularity (corresponding to a pencil with the Segre symbol $[(1,2),1]$). This case corresponds to (9).

If $(\lambda, x^I, x^J) =(\lambda_7,x_0x_3,x_2^2)$, then a general maximally unstable net $\N$ associated with the maximal set $N^{-}(\lambda, x^I, x^J)$ has three quadrics $Q_1$, $Q_2$ and $Q_3$ as generators such that $Q_1$ is smooth and $Q_2$, $Q_3$ are singular, and $Q_1\cap Q_i$ is a reduced curve with two $\mathbf{A}_1$ singularities for $i=2$, $3$ (corresponding to a pencil with the Segre symbol $[2,2]$). This case corresponds to (10).

If $(\lambda, x^I, x^J) =(\lambda_8,x_1x_3,x_1x_3)$, then a general maximally unstable net $\N$ associated with the maximal set $N^{-}(\lambda, x^I, x^J)$ has three quadrics $Q_1$, $Q_2$ and $Q_3$ as generators such that $Q_1$ is smooth and $Q_2=H_1\cup H_2$, $Q_3$ are singular, and $Q_1\cap Q_i$ is a reduced curve with two $\mathbf{A}_1$ singularities for $i=2$, $3$ (corresponding to a pencil with the Segre symbol $[(1,1),1,1]$). This case corresponds to (11).

To conclude, let $(\lambda, x^I, x^J) =(\lambda_9,x_1^2,x_0x_2)$. Then a general maximally unstable net $\N$ associated with the maximal set $N^{-}(\lambda, x^I, x^J)$ has three quadrics $Q_1$, $Q_2$ and $Q_3$ as generators such that $Q_1$, $Q_2$ are smooth quadrics with $Q_1\cap Q_2$ a smooth curve, and $Q_3$ is a singular quadric with $Q_i\cap Q_3$ a reduced curve with one $\mathbf{A}_3$ for $i=1$, $2$ (corresponding to a pencil with the Segre symbol $[(1,2),1]$). This case corresponds to (12).
                
The above list exhausts all possible cases given by \cite{Pap_code} up to changing the order of the product $A\times B \times C$ in the expression of maximal $N^{-}(\lambda,x^I,x^J)$. This proves the desired statement.
\end{proof}

The above results can also be applied to classify all semistable orbits, and in addition all polystable orbits, providing a more ``geometric'' description of the GIT quotient in the sense of Theorem \ref{unstable nets}. This extends the results of \cite{wall_theta}. Because this is beyond the scope of this paper, we omit it. 

We are mostly interested in the following corollary, which is one of the keys to deduce Corollary \ref{cor--final}.

\begin{cor}\label{cor: unstable nets discriminants}
    Let $\mathcal{N}$ be a net of quadrics in $\mathbb{P}^3$ with the associated discriminant quartic $\Delta(\N)\subsetneq\mathbb{P}^2$.
    Then the following hold.
    \begin{enumerate}
        \item If $\Delta(\N)$ is reduced and GIT unstable, then either it has a singular point of multiplicity at least three, or it consists of a cubic and an inflectional tangent line;
    \item if $\Delta(\N)$ is GIT unstable and not reduced, then it is not a double smooth conic.
    
\end{enumerate}
In particular, if $\Delta(\N)$ is GIT semistable, so is $\N$.
\end{cor}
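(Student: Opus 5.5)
The plan is to prove the contrapositive: if $\N$ is GIT unstable, then $\Delta(\N)$ is either $\P^2$ or GIT unstable. The two listed properties handle the second option. By Mumford's description recalled in Section \ref{3git}, a plane quartic is semistable exactly when it is either a reduced curve with at most double points other than a cubic plus an inflectional tangent line, or a double smooth conic. So (1) and (2) together say precisely that no unstable net can have a semistable discriminant. Since the statement is phrased for arbitrary $\N$, I will read (1) and (2) as claims about discriminants of unstable nets: if $\N$ is unstable and $\Delta(\N)\neq\P^2$, then $\Delta(\N)$ lands in the unstable part of Mumford's list.

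The proof then runs over the classification in Theorem \ref{unstable nets}. Each of the twelve maximally unstable families comes with an explicit triple $N^{-}(\lambda,x^I,x^J)=A\times B\times C$. For a general member I will write $\det(\lambda A_1+\mu A_2+\nu A_3)$ using the monomial supports. The zero pattern of the matrices of $Q_2$ and $Q_3$ forces monomials in $\mu,\nu$ to vanish. For example:
\begin{itemize}
\item in case (1), $Q_3$ is a double plane, and as in Case I of Proposition \ref{prop:good--ADE--corresp} the discriminant acquires a triple point at $Q_3$;
\item in cases (2), (4), (5) and (11), two members are singular at a common point or contain a common plane, and the argument of Proposition \ref{prop:good--ADE--corresp} gives a double line or a point of multiplicity at least three;
\item in cases with $\mathbf{D}_4$ or $\mathbf{A}_3$ intersections, the Segre symbols $[(3,1)]$, $[(1,2),1]$, $[2,2]$ and so on tell us the multiplicity of $\Delta$ restricted to the corresponding line of the pencil, and hence its contact order.
\end{itemize}
Concretely, for each case I will apply the same destabilizing one-parameter subgroup $\lambda_k$, transported to $|\N|$, which acts diagonally on $(\lambda,\mu,\nu)$ through the weights of the generators. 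I then check that the resulting quartic monomials all have negative weight, with at most one exception. The aim is to land in the normal form \eqref{eq--delta(N)} with $a_0=0$ or $a_2=a_3=0$. That yields exactly one of three outcomes: a triple point, a cubic with an inflectional tangent, or a nonreduced quartic that is not a double smooth conic, the last being ruled out because the double component would be a line or a singular conic.

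Finally, every unstable net is a degeneration of one of these maximal families, in the sense that its generators have smaller supports in the same $N^{-}$. The Hilbert--Mumford weight of $\Delta(\N)$ for the induced subgroup of $\SL(3)$ can only increase in negativity as supports shrink, since the determinant's support shrinks. So instability of $\Delta(\N)$ (or $\Delta(\N)=\P^2$) persists to all degenerations. The concluding sentence of the statement then follows: semistable $\Delta(\N)$ cannot come from an unstable $\N$.

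The main obstacle is the determinant bookkeeping case by case. I expect the hardest case to be showing that the double-conic possibility never occurs. I would handle it by noting that a double smooth conic has trivial weight against every diagonal one-parameter subgroup compatible with the support patterns above. This contradicts the strictly negative weight produced by the transported $\lambda_k$.
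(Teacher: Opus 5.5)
Your proposal is correct. Its skeleton matches the paper's: reduce via Theorems \ref{thm: pap paper thm} and \ref{unstable nets} to the twelve maximal families, compute the determinant, and pass to degenerations. The difference lies in how you certify instability.
\begin{itemize}
\item The paper reads off the shape and singularities of each generic discriminant (Table \ref{tab:deltaN-singularities}) and compares with Mumford's list. It handles degenerations by closedness of the unstable locus in a one-parameter family.
\item You use the diagonal one-parameter subgroup induced on $|\N|$ by the weights of the generators. You handle degenerations by noting that shrinking supports only shrinks the support of the determinant. This is valid because that certificate is diagonal in the generator coordinates.
\end{itemize}

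In fact your device needs no case analysis at all. Let $\lambda(t)$ act by $f(x)\mapsto f(\lambda(t)x)$. Let $f_1,f_2,f_3$ generate $\N$, with matrices $A_i$ and minimal $\lambda$-weights $w_i$ on their supports, where $w_1+w_2+w_3>0$; this is what $\lambda$-instability gives in this sign convention. Since $\det\lambda(t)=1$,
\[
F\bigl(t^{-w_1}x_1,t^{-w_2}x_2,t^{-w_3}x_3\bigr)=\det\Bigl(\sum_{i=1}^{3}x_i\,t^{-w_i}\lambda(t)^{T}A_i\lambda(t)\Bigr),\qquad F:=\det\Bigl(\sum_{i=1}^{3}x_iA_i\Bigr).
\]
Every entry of $t^{-w_i}\lambda(t)^{T}A_i\lambda(t)$ is polynomial in $t$, so the right-hand side is polynomial in $t$. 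Hence every monomial $x_1^ax_2^bx_3^c$ of $F$ satisfies $aw_1+bw_2+cw_3\le0$.

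Now take the subgroup of $\SL(3)$ with weights $-w_i+\tfrac13(w_1+w_2+w_3)$, after clearing denominators. It gives each such monomial weight at least $\tfrac43(w_1+w_2+w_3)>0$. So $\Delta(\N)$ is unstable or equal to $\P^2$ for every unstable $\N$ directly. This is just the statement that the equivariant map $(A_1,A_2,A_3)\mapsto F$ preserves null cones.

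Two consequences for your write-up:
\begin{itemize}
\item Your hedge ``with at most one exception'' is unnecessary; there are never exceptions.
\item The double-conic case is automatic, since a double smooth conic is semistable and so cannot be strictly destabilized. Your stated reason, that it has trivial weight against every diagonal subgroup, is false: $(xz-y^2)^2$ has monomial weights $-2,1,4$ under $\mathrm{Diag}(1,1,-2)$.
\end{itemize}

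What the paper's table buys beyond this is an explicit description of the discriminants of maximally unstable nets. For the corollary itself, your route makes the appendix classification unnecessary.
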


\begin{proof}
   
First, note that $(1)$ and $(2)$ follow from the last assertion and \cite[Section 1.12]{mumford}.
Now, we claim that to show the last assertion for any unstable net, it suffices to check the assertions $(1)$ and $(2)$ only for general maximally unstable nets.
Indeed, let $T$ be a smooth affine curve with a closed point $0\in T$, and let $\{\N_t\}_{t\in T}$
be a one-parameter family of nets of quadrics in $\mathbb{P}^3$.
Here, we can choose a family of polynomials $Q_i(t)$ for $i=1,2$ and $3$ that depend algebraically on $t$ and generate $V_{\N_t}$. Then the associated discriminant quartic $\Delta(\N_t)$ is given by an equation $F_t(x,y,z):=\det\bigl(xQ_1(t)+yQ_2(t)+zQ_3(t)\bigr)$. Since the determinant is a polynomial in the entries of the matrix
$xQ_1(t)+yQ_2(t)+zQ_3(t)$, it follows that the coefficients of $F_t(x,y,z)$ depend algebraically on $t$.
If $F_0(x,y,z)\ne0$, then $\{\Delta(\N_t)\}_{t\in T}$ defines a one-parameter family of plane quartics. If $\Delta(\N_t)$ is further GIT unstable for any general $t\in T$, then so is $\Delta(\N_0)$ by the openness of the semistable locus $\mathcal{X}_{2,2,3}^{\mathrm{ss}}$. 
In particular, since by Theorem \ref{thm: pap paper thm} all unstable orbits are given as degenerations of the maximally unstable nets given in Theorem \ref{unstable nets}, it suffices to only consider and classify the discriminant quartic curves for all maximally unstable orbits given in Theorem \ref{unstable nets} by \cite[Section 1.12]{mumford}.

We classify all of them explicitly. For instance, if we let $(\lambda, x^I, x^J) = (\lambda_1, x_0^2,x_0^2)$, then the maximal set $N^{-}(\lambda, x^I, x^J)$ gives a general maximally unstable net $\mathcal{N} $ with three generators $Q_1$, $Q_2$ and $Q_3$ such that $Q_1$, $Q_2$ are smooth and $Q_3 = 2H$ is a double hyperplane. By a calculation and the description of the supports of $f_1$, $f_2$ and $f_3\in V_{\N}$ representing $Q_1$, $Q_2$ and $Q_3$, the discriminant curve $\Delta(\N)$ is given by equation $g_4(x,y)  +  zg_3(x,y)$, where $g_4$ is a homogeneous polynomial of degree four and $g_3$ is a polynomial of degree three in variables $x$ and $y$. In particular, $\Delta(\N)$ has a singularity $\mathbf{D}_4$ or worse at the point $(0:0:1)$, and is thus GIT unstable. A similar argument applied to each and every maximally unstable orbit given in Theorem \ref{unstable nets} yields a description of the discriminant curves for all families of maximally unstable nets. This description is summarized in Table \ref{tab:deltaN-singularities}. 
\begin{center}
    \begin{table}[ht]
\centering 
\renewcommand{\arraystretch}{1.25}
\begin{tabular}{|c|c|c|c|}
\hline
\ref{unstable nets}.(-)& $N^{-}(\lambda, x^I, x^J)$ & Description of $\Delta(\mathcal N)$ & Type of sing. 
\\
\hline
\ref{unstable nets}.(1)& $N^{-}(\lambda_1, x_0^2,x_0^2)$
& $g_4(x,y) + z g_3(x,y)$
& $\text{at least }\mathbf{D}_4$ 
\\
\hline
\ref{unstable nets}.(2)& $N^{-}(\lambda_1, x_0x_3,x_0^2)$
& $x^2 g_2(x,y,z)$ (not double conic)
& non-reduced  
\\
\hline
\ref{unstable nets}.(3)& $N^{-}(\lambda_2, x_1x_3, x_0x_3)$
& $g_1(x,y)^2g_2(x,y,z)$
& non-reduced 
\\
\hline
\ref{unstable nets}.(4) &$N^{-}(\lambda_3, x_1^2, x_0x_1)$
& $x^2 g_2(x,y,z)$ (not double conic)
& non-reduced  
\\
\hline
\ref{unstable nets}.(5)& $N^{-}(\lambda_4, x_2^2,x_0^2)$
& $x^2 g_2(x,y,z)$ (not double conic)
& non-reduced 
\\
\hline
\ref{unstable nets}.(6)& $N^{-}(\lambda_5,x_2x_3,x_0x_3)$
& $x(xg_2(x,y,z)+z^3)$
& $\text{at least }\mathbf{A}_5$
\\
\hline
\ref{unstable nets}.(7)& $N^{-}(\lambda_5,x_0x_3,x_0x_2)$
& $x^2 g_1(x,y,z)^2$ (not double conic)
& non-reduced 
\\
\hline
\ref{unstable nets}.(8)& $N^{-}(\overline{\lambda}_2,x_0x_3,x_0x_2)$&
$x(g_3(x,y)+xyz+\alpha zx^2+\beta z^2x)$
& $\text{at least }\mathbf{A}_5$
\\
\hline
\ref{unstable nets}.(9)& $N^{-}(\lambda_6,x_0x_2,x_0x_2)$&
$g_4(x,y) + xzg_2(x,y)$
& $\text{at least }\mathbf{D}_4$ 
\\
\hline
\ref{unstable nets}.(10)& $N^{-}(\lambda_7,x_0x_3,x_2^2)$
& $x^2 g_1(x,y,z)^2$ (not double conic)
& non-reduced 
\\
\hline
\ref{unstable nets}.(11)& $N^{-}(\lambda_8,x_1x_3,x_1x_3)$
& $x^2 g_2(x,y,z)$ (not double conic)
& non-reduced 
\\
\hline
\ref{unstable nets}.(12)& $N^{-}(\lambda_9,x_1^2,x_0x_2)$
& $g_4(x,y) + z g_3(x,y)$ & $\text{at least }\mathbf{D}_4$
\\
\hline
\end{tabular}
\caption{Types of singularities of $\Delta(\mathcal N)$ for maximally unstable nets $\N$ associated with $N^{-}(\lambda, x^I, x^J)$.}\label{tab:deltaN-singularities}
\end{table}
\end{center}

Notice that the entries for \ref{unstable nets}.(6) and \ref{unstable nets}.(8) correspond to the quartic curve, which is the union of a cubic with an inflectional line. Therefore, Table \ref{tab:deltaN-singularities} and Theorem \ref{unstable nets} show that the assertions $(1)$ and $(2)$ hold for the general maximally unstable nets. This completes the proof.
\end{proof}

\bibliographystyle{plain}
\bibliography{ref}

\end{document}